\newtheorem{theorem}{Theorem}[section]
\newtheorem{corollary}[theorem]{Corollary}
\newtheorem{lemma}[theorem]{Lemma}
\theoremstyle{definition}
\newtheorem{example}[theorem]{Example}
\newtheorem{remark}[theorem]{Remark}
\newtheorem{definition}[theorem]{Definition}
\newcommand{\R}{\mathbb{R}}
\newcommand{\mc}{\mathcal}
\newcommand{\N}{\mathbb{N}}
\newcommand{\F}{\mathcal{F}}
\newcommand{\ii}{\mathrm{i}}
\DeclareMathOperator{\P2}{{\mathscr{P}}_{\text{\emph{\tiny{II }}}}}
\newcommand{\vecm}{\operatorname{vec}}
\newcommand{\T}{\mathscr{T}}
\newcommand{\PIII}{\mathscr{P}_{\text{\emph{\tiny{III}}}}}
\newcommand{\PII}{\mathscr{P}_{\text{\emph{\tiny{II}}}}}
\newcommand{\npmatrix}[1]{\left( \begin{matrix} #1 \end{matrix} \right)}
\title{Powers of Karpelevi\v c arcs and their Sparsest Realising matrices}
\author[1]{Priyanka Joshi}
\author[2]{Stephen Kirkland}
\author[1]{Helena \v Smigoc}
\affil[1]{School of Mathematics and Statistics, University College Dublin, Belfield, Dublin 4, Ireland}
\affil[2]{Department of Mathematics, University of Manitoba, Winnipeg, MB, Canada}
\begin{document}
\maketitle

\begin{abstract}
The region in the complex plane containing the eigenvalues of all $n\times n$ stochastic matrices was described by Karpelevi\v c in 1988, and it is since then known as the Karpelevi\v c region. The boundary of the Karpelevi\v c region is the union of disjoint arcs called the Karpelevi\v c arcs. We provide a complete characterization of the Karpelevi\v c arcs that are powers of some other Karpelevi\v c arc. Furthermore, we find the necessary and sufficient conditions for a sparsest stochastic matrix associated with the Karpelevi\v c arc of order $n$ to be a power of another stochastic matrix. 
\end{abstract}

\section{Introduction}

A square entrywise nonnegative matrix is called stochastic if each row sum equals $1$. Stochastic matrices and their properties are central to 
the study of Markov chains; in particular, the eigenvalues of a stochastic matrix govern the long-term behavior of the iterates of the corresponding Markov chain. Consequently, there is a long-standing interest in localising the eigenvalues of stochastic matrices, and a classic problem of Kolmolgorov \cite{kolmogorov1937markov} asks for a description of the region in the complex plane containing all the eigenvalues of all $n\times n$ stochastic matrices.

That region, 
denoted by $\Theta_n$, was characterised by Karpelevi\v c \cite{karpelevic}. He showed that the boundary of the Karpelevi\v c region, denoted by $\partial \Theta_n$, is a union of disjoint arcs called the Karpelevi\v c arcs. Kirkland, Laffey and \v Smigoc \cite{jmaa} expanded on this result by identifying the point on the boundary of the Karpelevi\v c region with a given argument $\theta \in [0,2\pi)$.

The problem of determining a stochastic $p$-th root of a stochastic matrix finds its motivation in the theory of Markov Chains, where it corresponds to the problem of finding a transition matrix over a shorter time interval from a given transition matrix. This problem was considered for example in \cite{MR2794585}.

Johnson and Paparella \cite{johnsonpaperella} posed a conjecture that selected Karpelevi\v c arcs are powers of some other Karpelevi\v c arcs. Kim and Kim \cite{kimkim} proved their conjecture. However, results in \cite{johnsonpaperella} and \cite{kimkim} only partially answer the question of characterising the powers of the Karpelevi\v c arcs. After establishing notation and recalling the necessary background results on the Karpelevi\v c region in Section \ref{sec:backround1}, we give a complete characterization of the Karpelevi\v c arcs that can be written as a power of another Karpelevi\v c arc in Section \ref{sec:ArcPowers}. 

 Johnson and Paparella \cite{johnsonpaperella} also considered the question of constructing stochastic matrices realising the boundary of the Karpelevi\v c region. For each Karpelevi\v c arc, they provided a single parametric family of stochastic matrices that realises eigenvalues on that arc.  Kirkland and \v Smigoc in \cite{kirklandsmigoc} described all sparsest $n\times n$ stochastic matrices realising eigenvalues on the border of the Karpelevi\v c region.  In Section \ref{sec:background2} we recall the results from  \cite{kirklandsmigoc}, and establish notation for digraphs and the associated stochastic matrices. This background is needed in Section \ref{sec:powers_of_matrices}, where we
 characterise the sparsest realising matrices that can be written as a power of another stochastic matrix.
 
This paper is organised in two parts that can be read independently. Section \ref{sec:backround1} provides the background to Section \ref{sec:ArcPowers} where the complete characterisation of the Karpelevi\v c arcs that can be written as a power of another Karpelevi\v c arc is given in Theorem \ref{thm:powers} and Corollary \ref{cor:powers1}. A reader that is more interested in the powers of matrices and results developed in  Section \ref{sec:powers_of_matrices}, could view  Theorem \ref{thm:powers} and Corollary \ref{cor:powers1} as part of background results, and learn about additional background in Section \ref{sec:background2}.  

\section{Background and Notation on the Karpelevi\v c Region}\label{sec:backround1}

Given $n \in \mathbb{N}$, the set $$\F_n=\{\sfrac{p}{q} \mid 0 \leq p < q \leq n, \gcd(p,q)=1\}$$ is called \emph{the set of Farey fractions of order $n$}.
The pair $(\sfrac{p}{q},\sfrac{r}{s})$ is called \emph{a Farey pair (of order $n$)}, if $\sfrac{p}{q},\sfrac{r}{s} \in \mc F_n$, $\sfrac{p}{q} < \sfrac{r}{s}$ and $\sfrac{p}{q}< x <\sfrac{r}{s}$ implies $x \not \in \mc F_n$. The Farey fractions $\sfrac{p}{q}$ and $\sfrac{r}{s}$  are called \emph{Farey neighbours} if one of  $(\sfrac{p}{q},\sfrac{r}{s})$ and $(\sfrac{r}{s}, \sfrac{p}{q})$ is a Farey pair. It is well known that the Farey fractions $\sfrac{p}{q}$ and $\sfrac{r}{s}$ form a Farey pair iff $q+s>n$ and $\left| qr-ps\right|=1 $. 

For $n$, $q$ and $s$, satisfying, $q<s\leq n$, $\gcd(q,s)=1$, and $q+s>n$, there exits precisely two Farey pairs in $\mc F_n$ with denominators $q$ and $s$: 
\begin{equation}\label{eq:conjugate farey pairs}
    \mc F_n(q,s):=\{(\sfrac{p}{q},\sfrac{r}{s}),(\sfrac{(s-r)}{s},\sfrac{(q-p)}{q})\}.
\end{equation}
Hence, there exist unique $p$ and $r$ so that $(\sfrac{p}{q},\sfrac{r}{s})\in \mc F_n(q,s)$. We will denote those as $p(q,s)$ and $r(q,s)$, and when clear from the context just as $p$ and $r$.

Further parameters associated with  $\mc F_n(q,s)$ that were first defined in \cite{jmaa} and will also be needed in this work, are given below:
\begin{align*}
d(q,s)&=\left \lfloor \frac{n}{q}\right \rfloor,\\
\delta(q,s)&=\gcd(d(q,s),s), \\
s&=s_1(q,s) \delta(q,s), \\ 
d(q,s)&=d_1(q,s) \delta(q,s). 
\end{align*}
Note that $d(q,s)$, $\delta(q,s)$, $d_1(q,s)$, $s_1(q,s)$ depend on $n$ as well as on $q$ and $s$. Our notation does not capture this dependence, as we will always fix $n$. Furthermore, once $q$ and $s$ are established, we will abbreviate notation to $d$, $\delta$, $s_1$ and $d_1$. 

\begin{example}\label{ex:parameters}
Let $q=3$ and $s=14$. Then $\mc F_{n}(3,14)=\{(\sfrac{1}{3},\sfrac{5}{14}),(\sfrac{9}{14},\sfrac{2}{3})\}$ for $n \in \{14,15,16\}$. For $n=14$ we get $d(3,14)=\left \lfloor \frac{14}{3}\right \rfloor=4$, $\delta(3,14)=2, d_1(3,14)=2$ and $s_1(3,14)=7$. On the other hand, for $n=15$ and for $n=16$ we get $d(3,14)=\left \lfloor \frac{16}{3}\right \rfloor=5$, $\delta(3,14)=1, d_1(3,14)=5$ and $s_1(3,14)=14.$
\end{example}

The Karpelevi\v c region was first described in \cite{karpelevic}. We recall a version of this result from  \cite{ito} below. 

\begin{theorem}(\cite{karpelevic},\cite{ito})\label{thm:Karpelevic}
The region $\Theta_n$ is symmetric with respect to the real axis, is included in the unit disc $\{z\in C|\hspace{1mm} |z|\leq 1\}$, and intersects the unit circle $\{z\in C| \hspace{1mm} |z|=1\}$ at the points $\{e^{\frac{2\pi i p}{q}}|\hspace{1mm} p/q \in \mc F_n\}$. The boundary of $\Theta_n$ consists of these points and curvilinear arcs connecting them in circular order.

The arc with endpoints $e^{\frac{2\pi i p}{q}}$ and $e^{\frac{2\pi i r}{s}}$, $q <  s$, is given by the following parametric equation:
\begin{equation}\label{eq:karpelevic}
    t^s(t^q-1+\alpha)^{\lfloor \frac{n}{q}\rfloor}=\alpha^{\lfloor \frac{n}{q}\rfloor}t^{q\lfloor \frac{n}{q}\rfloor}, \alpha \in [0,1].
\end{equation}
\end{theorem}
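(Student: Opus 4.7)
This theorem is classical; I sketch the standard route, which divides into three layers: the elementary global properties, the construction of the boundary arcs (lower bound), and the extremal analysis excluding eigenvalues outside (upper bound).

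The elementary layer first. Symmetry of $\Theta_n$ under complex conjugation is immediate because a real matrix has a spectrum closed under conjugation. The inclusion $\Theta_n \subseteq \{|z|\le 1\}$ is Perron--Frobenius: a row-stochastic matrix has spectral radius equal to its common row-sum, namely $1$. For the unit-circle intersection, one notes that if $|\lambda| = 1$ is an eigenvalue of an $n\times n$ stochastic matrix $A$, then restricting $A$ to an irreducible diagonal block in its Frobenius normal form that contains $\lambda$, the structure theorem for the peripheral spectrum of an irreducible nonnegative matrix forces $\lambda$ to be a $q$-th root of unity with $q$ at most the block size, hence $q \le n$. Conversely, every $p/q \in \F_n$ is realised by the direct sum of the $q \times q$ cyclic permutation matrix and any stochastic matrix of order $n-q$.

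For the arcs themselves the key is a construction. For each Farey pair $(p/q, r/s)$ of order $n$ with $q < s$, I would exhibit an explicit one-parameter family $M(\alpha)$, $\alpha \in [0,1]$, of $n\times n$ stochastic matrices whose digraph carries two interlocking cycles with lengths reflecting $q$ and $s$, with weights $\alpha$ and $1-\alpha$ distributing probability at a branching vertex. Computing the characteristic polynomial of $M(\alpha)$, most cleanly via the cycle-cover formula for determinants of weighted digraphs, produces exactly the identity \eqref{eq:karpelevic} for a distinguished root $t$. At $\alpha = 0$ the equation degenerates to $t^s(t^q-1)^{\lfloor n/q\rfloor} = 0$, whose unimodular roots are the $q$-th roots of unity; at $\alpha = 1$ it becomes $t^s = 1$; and continuity in $\alpha$ sweeps the distinguished root along the arc joining $e^{2\pi \ii p/q}$ to $e^{2\pi \ii r/s}$.

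The main obstacle is the converse direction: showing that no eigenvalue of any $n\times n$ stochastic matrix lies strictly outside the region enclosed by the arcs just constructed. A standard scheme uses compactness of the polytope of row-stochastic matrices together with upper semicontinuity of the spectrum, so that an alleged violating eigenvalue is realised by an extremal matrix. A careful combinatorial analysis of such an extremal matrix -- forcing its underlying digraph onto a union of two simple cycles whose lengths correspond to a Farey pair of order $n$ -- identifies it with the construction of the previous paragraph, producing a contradiction. This extremal-combinatorial step is the heart of Karpelevič's original argument and the technically delicate part of the proof; Ito's reformulation is essentially a reparametrisation through $\alpha$ that organises the casework into the clean form stated in \eqref{eq:karpelevic}.
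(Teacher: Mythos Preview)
The paper does not prove this theorem at all: it is stated as background and attributed to Karpelevi\v c and Ito via the citations \cite{karpelevic} and \cite{ito}, with no argument supplied. There is therefore no ``paper's own proof'' against which to compare your proposal.

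As a standalone sketch of the classical result, your outline is broadly faithful to the standard route. The elementary layer (conjugation symmetry, Perron--Frobenius bound, peripheral spectrum of irreducible blocks) is correct and complete. The constructive direction is accurately described, and indeed the paper's later Theorems~\ref{thm:TI_realisation}--\ref{thm:TIII_realisation} exhibit exactly the kind of two-cycle digraph realisations you allude to. Where your sketch is thin is precisely where the real work lies: the extremal analysis showing that nothing lies outside the arcs. You correctly flag this as ``the heart of Karpelevi\v c's original argument'' and ``technically delicate,'' but what you have written is a description of the strategy rather than an execution of it; reducing an arbitrary extremal stochastic matrix to a two-cycle configuration matching a Farey pair is a substantial combinatorial argument that your paragraph does not carry out. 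For the purposes of this paper that is entirely acceptable, since the theorem is invoked as a known result, but if you were asked to supply a self-contained proof you would need to fill in that step or cite the specific lemmas from \cite{karpelevic} or \cite{ito} that accomplish it.
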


For a Farey pair associated with the endpoints of an arc, the above theorem assumes $q$ to be the smallest denominator of the two fractions appearing in the pair. 
 As equation \eqref{eq:karpelevic} depends on $q$ and $s$ only, the same equation describes both arcs associated with the Farey pairs in $\mc F_n(q,s)$. These two arcs are conjugates of each other, and we will (when convenient) only concentrate on one of them, say the arc associated with the Farey pair, $(\sfrac{p}{q}, \sfrac{r}{s})$ with $q<s$. 

For $q<s$, $\gcd(q,s)=1$ and $q+s>n$, we set $p:=p(q,s)$ and $r:=r(q,s)$, and introduce the following notation:
\begin{enumerate}
\item $\arg(q,s)\equiv (\sfrac{2 \pi p}{q},\sfrac{2 \pi r}{s}) \cup (\sfrac{2 \pi  (s-r)}{s}, \sfrac{2 \pi  (q-p)}{q})$,
\item ${\mc K}_n(q,s)\equiv \partial \Theta_n \cap  \{z \in \mathbb{C} \mid \arg(z) \in \arg(q,s)\}$,
\item $\mc K_n$ the set of all $\mc K_n(q,s)$ for a fixed $n \in \mathbb{N}$. 
\end{enumerate}
Theorem \ref{thm:Karpelevic} tells us that $\mc K_n(q,s)$ contains a pair of arcs that are complex conjugates of each other, and whose points satisfy the parametric equation \eqref{eq:karpelevic}. For the Farey pair, $(\sfrac{p}{q},\sfrac{r}{s}) \in \mathcal F_n(q,s)$ with $q<s$, we can substitute $d=\lfloor \frac{n}{q}\rfloor$ in equation \eqref{eq:karpelevic}. The  polynomial $f_{\alpha}(t)= t^s(t^q-\beta)^d-\alpha^d t^{qd}, \alpha \in [0,1]$, are called the \emph{Ito polynomials} for the Farey pair $(\sfrac{p}{q},\sfrac{r}{s}),q<s$. The \emph{reduced Ito polynomials} are the polynomials obtained from the Ito polynomials by removing the zero roots.

\begin{example}\label{ex:karpelevic_thm}
In Example \ref{ex:parameters} we have seen that $\mc F_{n}(3,14)=\{(\sfrac{1}{3},\sfrac{5}{14}),(\sfrac{9}{14},\sfrac{2}{3})\}$ for $n \in \{14,15,16\}$. In all three cases $\arg(3,14)= (\sfrac{2 \pi }{3},\sfrac{5 \pi }{7}) \cup (\sfrac{9 \pi }{7}, \sfrac{4 \pi}{3})$, and $\mc K_{n}(3,14)=\partial \Theta_{n} \cap \{z \in \mathbb{C} \mid \arg(z) \in \arg(3,14) \}$. 
\end{example}

Theorem \ref{thm:Karpelevic} describes $\partial \Theta_n$. However, from the theorem, it is not immediately clear, given $\theta \in [0,2\pi]$, how to find  $\rho$ satisfying $\rho e^{i \theta} \in \partial \Theta_n$. This issue was resolved in Theorem 1.2 from \cite{jmaa} which is restated below. 

\begin{definition}
For $n \geq 2$, we define $\rho_n: [0,2\pi] \rightarrow \R_+$ to be the positive number satisfying $\rho_n(\theta)e^{i\theta} \in \partial \Theta_n$. \end{definition}

\begin{theorem}[Theorem 1.2, \cite{jmaa}]\label{thm:gcd200}
Let $(\frac{p}{q},\frac{r}{s}) \in \F_n(q,s)$, $q <s$, and $\theta \in [\sfrac{2 \pi p}{q},\sfrac{2 \pi r}{s}]$. Then $\rho_n(\theta)=\mu^{d_1}$,  where $\mu$ is the unique positive solution to 
\begin{equation}\label{eq:implicit}
    \mu^{s_1} \sin(q\theta)- \mu^{qd_1} \sin \left(\frac{s_1}{d_1}\theta  -\frac{2 \pi r}{d} \right)-\sin\left((q-\frac{s_1}{d_1})\theta+\frac{2 \pi r}{d} \right)=0.
    \end{equation}
Furthermore,  $\rho_n(\theta) e^{\ii \theta}$ is a root of the rational function $\phi_{\alpha}(t) = (t^q-\beta)^d-\alpha^dt^{qd-s}$,  where $\alpha$ is given by $$\alpha \sin\left((q-\frac{s_1}{d_1})\theta+\frac{2 \pi r}{d} \right)=\mu^{s_1}  \sin(q\theta).$$ 
\end{theorem}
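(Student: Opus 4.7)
The plan is to start from the parametric equation of Theorem \ref{thm:Karpelevic} and reverse-engineer the stated implicit description of $\mu$. Since $\rho_n(\theta)e^{\ii\theta} \in \partial\Theta_n$, there exists $\alpha \in [0,1]$ with $\beta = 1-\alpha$ for which $t := \rho_n(\theta)e^{\ii\theta}$ solves $t^s(t^q-\beta)^d = \alpha^d t^{qd}$. Cancelling the nonzero $t^s$ yields exactly $\phi_\alpha(t) = 0$, and extracting a $d$-th root gives
\[
t^q - \beta \;=\; \alpha\,\omega\, t^{q-s/d} \qquad \text{for some $\omega$ with } \omega^d = 1.
\]
Only one branch can describe the arc joining $e^{2\pi \ii p/q}$ to $e^{2\pi \ii r/s}$; pushing $\alpha \to 1$ (so $\beta \to 0$) forces $t^{s/d} = \omega$, and matching against the known endpoint $t = e^{2\pi \ii r/s}$ pins down $\omega = e^{2\pi \ii r/d}$.

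Next I set $t = \mu^{d_1}e^{\ii\theta}$; the $d_1$-th power is chosen because $q - s/d = q - s_1/d_1$ has denominator $d_1$, making $t^{q-s/d} = \mu^{qd_1 - s_1}e^{\ii(q - s_1/d_1)\theta}$ single-valued. With the shorthand $B = (q - s_1/d_1)\theta + 2\pi r/d$, the relation above becomes
\[
\mu^{qd_1}e^{\ii q\theta} - \beta \;=\; \alpha\,\mu^{qd_1 - s_1}\, e^{\ii B}.
\]
Its imaginary part is precisely $\alpha\sin B = \mu^{s_1}\sin(q\theta)$, which is the second formula of the theorem. To eliminate $\alpha$ I multiply the imaginary part by $\cos B$, the real part by $\sin B$, and subtract; the identity $\sin(q\theta)\cos B - \cos(q\theta)\sin B = \sin((s_1/d_1)\theta - 2\pi r/d)$ then yields $\beta\sin B = -\mu^{qd_1}\sin((s_1/d_1)\theta - 2\pi r/d)$. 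Adding the two expressions and invoking $\alpha + \beta = 1$ gives the implicit equation \eqref{eq:implicit}.

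The main obstacle is uniqueness of the positive $\mu$ solving \eqref{eq:implicit}. I would address this via monotonicity: for $\theta \in [2\pi p/q,\, 2\pi r/s]$ I claim $\sin(q\theta) \geq 0$ and $\sin((s_1/d_1)\theta - 2\pi r/d) \leq 0$. Indeed, the Farey-pair identity $|qr - ps| = 1$ forces $q\theta \in [2\pi p,\, 2\pi p + 2\pi/s]$, a small window above $2\pi p$, and a parallel computation places $(s_1/d_1)\theta - 2\pi r/d$ in $[-2\pi/(qd),\, 0]$. With both signs in hand, the left side of \eqref{eq:implicit} is a sum of two nondecreasing functions of $\mu > 0$ and is strictly increasing unless both coefficients vanish (only at the arc endpoints), so the equation has at most one positive root. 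Existence of $\mu \in (0,1]$ then follows by comparing values at the endpoints of the $\theta$-interval against the known endpoint values of the arc, completing the proof.
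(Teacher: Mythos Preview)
This theorem is not proved in the present paper: it is quoted verbatim from \cite{jmaa} as background (see the sentence immediately preceding the statement), so there is no ``paper's own proof'' to compare your attempt against.

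That said, your argument is essentially a correct reconstruction of how one would establish the result. The algebraic core is clean: dividing the Ito equation by $t^s$ gives $\phi_\alpha(t)=0$; extracting a $d$-th root, identifying the branch $\omega=e^{2\pi\ii r/d}$ from the $\alpha=1$ endpoint, substituting $t=\mu^{d_1}e^{\ii\theta}$, and separating real and imaginary parts yields both \eqref{eq:implicit} and the formula for $\alpha$ exactly as stated. Your sign analysis for uniqueness also checks out: using $qr-ps=1$ one finds $q\theta\in[2\pi p,\,2\pi p+2\pi/s]$ so $\sin(q\theta)\ge 0$, and $(s_1/d_1)\theta-2\pi r/d\in[-2\pi/(qd),0]$ so the second coefficient $-\sin(\cdot)$ is also $\ge 0$; hence the left side of \eqref{eq:implicit} is strictly increasing in $\mu>0$ on the open arc.

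Two places deserve tightening. First, the branch choice: you determine $\omega$ at $\alpha=1$ and implicitly invoke continuity to fix it along the whole arc. This is correct, but you should state explicitly that $(t^q-\beta)/(\alpha\,t^{q-s/d})$ is a continuous $d$-th root of unity along the arc (for $\alpha>0$) and hence constant. Second, your existence remark is weaker than it needs to be: existence of a positive solution is automatic, because your derivation already shows that $\mu=\rho_n(\theta)^{1/d_1}$ satisfies \eqref{eq:implicit}; no endpoint comparison is required.
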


\begin{remark}\label{remark:gcd200}
To apply Theorem \ref{thm:gcd200} directly to $(\frac{p}{q},\frac{r}{s}) \in \F_n$, the assumption $q <s$ is necessary. To use the theorem for $\theta' \in (\frac{2 \pi(s-r)}{s},\frac{2 \pi(q-p)}{q}) \in \F_n(q,s)$ we note that $\rho_n e^{i \theta'} \in \partial\Theta_n$ if and only if $\rho_n e^{i \theta} \in \partial\Theta_n$ for $\theta=2 \pi-\theta'$. So for $\theta' \in (\frac{2 \pi(s-r)}{s},\frac{2 \pi(q-p)}{q}) \in \F_n$, we have $\mu^{d_1} e^{i\theta'} \in \mc K_n(q,s)$ if and only if $\mu$  satisfies:
\begin{equation}\label{eq:implicit-conjugate}
     \mu^{s_1} \sin(q\theta')- \mu^{qd_1} \sin \left(\frac{s_1}{d_1}\theta'  +\frac{2 \pi (r-s)}{d} \right)-\sin\left((q-\frac{s_1}{d_1})\theta'-\frac{2 \pi (r-s)}{d} \right)=0.
\end{equation}
This equation can be obtained from \eqref{eq:implicit} by inserting $\theta=2 \pi-\theta'$. 
 \end{remark}

\begin{example}
To determine $\rho_{14}(\sfrac{29 \pi}{42})$ we first note that $\sfrac{29 \pi}{42} \in [\sfrac{2 \pi }{3},\sfrac{5 \pi }{7}]$ and $(\sfrac{1}{3},\sfrac{5}{14}) \in \mathcal F_{14}(3,14)$. Substituting $d=4, \delta=2, d_1=2, s_1=7$ and $\theta=\sfrac{29 \pi}{42}$ in \eqref{eq:implicit}, we get: 
$$\mu^{7} \sin(\sfrac{29 \pi}{14})+ \mu^{6} \sin (\sfrac{\pi}{12})-\sin\left(\sfrac{181 \pi}{84} \right)=0,$$
which has a unique positive solution $\mu_0$ (approximately equal to $0.99542$). Thus, $\rho_{14}(\sfrac{29 \pi}{42})=\mu_0^2$.
 
 Now, for $(\sfrac{9}{14},\sfrac{2}{3}) \in \mc F_{14}(3,14)$ let $\theta'=2\pi-\theta=\sfrac{55\pi}{42}$. We see that $\sfrac{55\pi}{42} \in[\sfrac{9 \pi}{7},\sfrac{4 \pi}{3}]$. Substituting $\theta'=55\pi/42$ in equation \eqref{eq:implicit-conjugate}, we get:
 
 $$\mu^{7} \sin(\sfrac{55 \pi}{14})- \mu^{6} \sin (\sfrac{\pi}{12})-\sin\left(\sfrac{323 \pi}{84} \right)=0,$$ which again has the unique positive solution $\mu_0$. Hence, $\rho_{14}(55\pi/42)=\mu_0^2$.
\end{example}

\section{Powers of Karpelevi\v c Arcs}\label{sec:ArcPowers}

For $\mc S \subset \mathbb{C}$ and $c \in \N$, we define the $c$-th power of $\mc S$ to be:
$$\mc S^c:=\{\lambda^c| \lambda \in\mc S\}.$$
In this section, we aim to understand, when one Karpelevi\v c arc is a power of another Karpelevi\v c arc. In other words, we want to identify $q,s,\hat q, \hat s, n$ and $c$ so that $\mc K_n(q,s)=\mc K_n(\hat q,\hat s)^c$. 

We start by considering the necessary conditions on the associated Farey pairs that assure that the endpoints of an arc are mapped to the endpoints of another arc. 
Let $\mc F_n(q,s)=\{(\sfrac{p}{q},\sfrac{r}{s}),(\sfrac{(s-r)}{s},\sfrac{(q-p)}{q})\}$
 be the set of two Farey pairs associated with $\mc K_n(q,s)$, and  $
{\mc F}_n(\hat q,\hat s)=\{(\sfrac{\hat p}{\hat q}, \sfrac{\hat r}{\hat s}), (\sfrac{(\hat  s-\hat r)}{\hat s}, \sfrac{(\hat q-\hat p)}{\hat q})\}$ the set of two Farey pairs associated with $\mc K_n(\hat q,\hat s)$. If $\mc K_n(q,s)=\mc K_n(\hat q,\hat s)^c$, then the endpoints of the arcs in $\mc K_n(\hat q,\hat s)^c$ have to map to the endpoints of the arcs in $\mc K_n(q,s)$. In terms of the associated Farey pairs: 
\begin{equation}\label{eq:frac1}
\{(\sfrac{\hat p c}{\hat q}-\lfloor\sfrac{\hat p c}{\hat q}\rfloor, \sfrac{\hat rc}{\hat s}-\lfloor\sfrac{\hat rc}{\hat s}\rfloor), (\sfrac{(\hat s-\hat r)c}{\hat s}-\lfloor\sfrac{(\hat s-\hat r)c}{\hat s}\rfloor,\sfrac{(\hat q-\hat p)c}{\hat q}-\lfloor\sfrac{(\hat q-\hat p)c}{\hat q}\rfloor)\}={\mc F}_n(q,s).
\end{equation}
Since arcs in $\mc K_n(q,s)$ span only a fraction of the unit circle, we also need: 
\begin{equation}\label{eq:frac2}
    \lfloor \sfrac{\hat pc}{\hat q}\rfloor= \lfloor\sfrac{\hat rc}{\hat s}\rfloor
\text{ and } 
\lfloor \sfrac{(\hat q-\hat p)c}{\hat q} \rfloor=\lfloor\sfrac{(\hat s-\hat r)c}{\hat s} \rfloor . 
\end{equation}
The endpoints of $\mc K_n(q,s)$ are the $c$-th powers of the endpoints of $\mc K_n(\hat q,\hat s)$ precisely when \eqref{eq:frac1} and \eqref{eq:frac2} hold. Since those will from now on be ongoing assumptions, we gather them in the definition below.  
 \begin{definition}
If $\mc F_n(q,s)$ and $\mc F_n(\hat q, \hat s)$ satisfy equations \eqref{eq:frac1} and \eqref{eq:frac2}, then we define $\mc F_n(\hat q, \hat s) \star c:=\mc F_n(q,s)$, otherwise we say that $\mc F_n(\hat q, \hat s) \star c$ is not defined.
\end{definition}

The example below illustrates that it is possible for \eqref{eq:frac1} to hold while \eqref{eq:frac2} does not.

\begin{example}
Let $\hat q=5$ and $\hat s=6$ and $6 \leq n \leq 10$. Then $\hat p=4$, $\hat r=5$ and $\mc F_n(\hat q, \hat s)=\mc F_n(5,6)=\{(\sfrac{4}{5},\sfrac{5}{6}),(\sfrac{1}{6},\sfrac{1}{5})\}$. 
Taking $c=31$, we verify \eqref{eq:frac1}: $$\{(\sfrac{(4\times 31)}{5} -24,\sfrac{(5\times 31)}{6}-25),(\sfrac{(1\times 31)}{6}-5,\sfrac{(1 \times 31)}{5}-6)\}=\mc F_n(5,6)=\mc F_n(q,s).$$ Clearly,  \eqref{eq:frac2} does not hold and $\mc F_n(5,6)\star 31$ is not defined.  
\end{example}

\begin{lemma}\label{lem:Fpairs}
If $\mc F_n(\hat q, \hat s)\star c=\mc F_n(q,s)$, then either $c$ divides $\hat q$ or $c$ divides $\hat s$. 
\end{lemma}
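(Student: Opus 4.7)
The plan is to identify the hidden multiplicative structure of $c$ relative to $\hat q$ and $\hat s$, and then to use the Farey order condition $q + s > n$ to force one of $\gcd(c,\hat q)$, $\gcd(c,\hat s)$ to be trivial.

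I would begin by setting $a := \gcd(c,\hat q)$ and $b := \gcd(c,\hat s)$. Because $\gcd(\hat p,\hat q) = 1$, the fractional part of $\hat p c/\hat q$, written in lowest terms, has denominator $\hat q/a$, and similarly the reduced denominator of the fractional part of $\hat r c/\hat s$ is $\hat s/b$. Equation \eqref{eq:frac1} then forces $\{\hat q/a,\hat s/b\} = \{q,s\}$, so there are two cases: (A) $\hat q = aq$, $\hat s = bs$, or (B) $\hat q = as$, $\hat s = bq$. In particular $\hat q\hat s = ab \cdot qs$, and since $\gcd(\hat q,\hat s) = 1$ we obtain $\gcd(a,b) = 1$.

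Next I would pin down the value of $c$ by combining \eqref{eq:frac2} with the Farey identities $\hat q\hat r - \hat p\hat s = 1$ and $qr - ps = 1$. Letting $j$ denote the common integer part appearing in \eqref{eq:frac2}, the equations $\hat p c/\hat q - j = p/q$ and $\hat r c/\hat s - j = r/s$ in Case A (or their conjugate analogues in Case B, involving $(s-r)/s$ and $(q-p)/q$) yield, upon subtraction, $c/(\hat q\hat s) = 1/(qs)$, whence $c = \hat q\hat s/(qs) = ab$.

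The conclusion is then a short consequence of the order-$n$ condition. In Case A, the chain $bs = \hat s \le n < q + s$ gives $(b-1)s < q < s$, which forces $b = 1$; then $c = a$ and $\hat q = cq$, so $c \mid \hat q$. The symmetric inequality $as = \hat q \le n < q + s$ in Case B forces $a = 1$, giving $c \mid \hat s$. The main obstacle, and the only delicate step, is the derivation $c = ab$: it genuinely uses both \eqref{eq:frac1} (for the denominators $\hat q/a$ and $\hat s/b$) and \eqref{eq:frac2} (to eliminate wrap-around so the subtraction is meaningful) before the Farey identity can be applied; once $c = ab$ is in hand, the divisibility drops out of one line of arithmetic.
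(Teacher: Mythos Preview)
Your proof is correct and uses the same three ingredients as the paper's argument: the identification $\{q,s\}=\{\hat q/\gcd(c,\hat q),\,\hat s/\gcd(c,\hat s)\}$ from the reduced denominators, the Farey determinant $qr-ps=\hat q\hat r-\hat p\hat s=1$, and the order-$n$ inequality $q+s>n$. The only difference is the order of application: the paper first uses $q+s>n$ to rule out both gcds being $\ge 2$ and then computes the determinant in the surviving case to show the residual factor $c_0:=c/(c_{\hat q}c_{\hat s})$ equals $1$, whereas you first subtract the two fractional-part equations to obtain $c\cdot qs=\hat q\hat s$, hence $c=ab$ uniformly, and only then invoke $q+s>n$ to force $a=1$ or $b=1$. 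Your ordering is marginally cleaner in that it absorbs the paper's separate treatment of the case $c_{\hat q}=c_{\hat s}=1$ (where the paper simply asserts $c=1$) into the general computation, and your explicit Case A/B split according to which of $q,s$ equals $\hat q/a$ makes the sign bookkeeping in the determinant step more transparent.
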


\begin{proof}
Let us define $c_{\hat q}:=\gcd(c,\hat q)$, $c_{\hat s}:=\gcd(c,\hat s)$ so that $c=c_{\hat q} c_{\hat s} c_0$. If $\mc F_n(\hat q, \hat s)\star c=\mc F_n(q,s)$, we notice that $\{q,s\}=\{\sfrac{\hat q}{c_{\hat q}},\sfrac{\hat s}{c_{\hat s}}\}.$ Now, if $c_{\hat s}=c_{\hat q}=1$ then $\{q,s\}=\{\hat q, \hat s\}$ i.e $\mc F_n(\hat q, \hat s)=\mc F_n(q,s)$ since $q$, $s$ and $n$ uniquely define $\mc F_n(q,s)$. As this implies $c=1$, we can assume $c_{\hat s} c_{\hat q}>1$.

From $\hat q<\hat s \leq n$ and
$q+s=\sfrac{\hat q}{c_{\hat q}}+\sfrac{\hat s}{c_{\hat s}}>n,$
we conclude that $c_{\hat q}\geq 2$ and $c_{\hat s}\geq 2$ cannot occur. 
Assume $c_{\hat s}=1$, $c_{\hat q}\geq 2$, and $\hat q=c_{\hat q}\hat q_0$. (The case $c_{\hat s}\geq 2$ and $c_{\hat q}=1$ can be argued similarly.) By assumption
$$((\sfrac{\hat p c}{\hat q})-a, (\sfrac{\hat rc}{\hat s})-a)=(\sfrac{(\hat p c_0-a \hat q_0)}{\hat q_0}, \sfrac{(\hat rc_{\hat q}c_0-a \hat s)}{\hat s})$$ is a Farey pair for  $a=\lfloor \sfrac{\hat pc}{\hat q}\rfloor= \lfloor\sfrac{\hat rc}{\hat s}\rfloor$. 
Hence :
\begin{align*}
1&=\hat q_0(\hat rc_{\hat q}c_0-a \hat s)-\hat s(\hat p c_0-a \hat q_0) \\
 &=(\hat s\hat p-\hat q \hat r)c_0=c_0.
\end{align*}
We conclude that $c_0=1$, proving that $c$ divides $\hat q$.
\end{proof}

\begin{example}
Let $6\leq n \leq 10$, $\mc F_n(\hat q, \hat s)=\mc F_n(5,6)=\{(\sfrac{4}{5},\sfrac{5}{6}),(\sfrac{1}{6},\sfrac{1}{5})\}$, and $c=3$.
Although $c$ divides $\hat s$,  $\mc F_n(\hat q, \hat s)\star c$ is not defined for all $6\leq n \leq 10$. Indeed, 
$$\{(\sfrac{(4\times 3)}{5} -2,\sfrac{(5\times 3)}{6}-2),(\sfrac{(1\times 3)}{6}-0,\sfrac{(1 \times 3)}{5}-0)\}=\{(\sfrac{2}{5},\sfrac{1}{2}),(\sfrac{1}{2}, \sfrac{3}{5})\},$$
and $\{(\sfrac{2}{5},\sfrac{1}{2}),(\sfrac{1}{2}, \sfrac{3}{5})\}=\mc F_n(2,5)$ for $n\in \{5,6\}$ but not for $n \in \{7,8,9,10\}$. 
\end{example}

\begin{corollary}\label{corr:endpoints}
Let $\mc F_n(\hat q, \hat s)\star c=\mc F_n(q,s)$, ${\hat \zeta}=(\frac{ \hat p}{\hat q},\frac{\hat r}{\hat s}) \in \mc F_n(\hat q,\hat s)$ and $a:=\lfloor \sfrac{\hat pc}{\hat q}\rfloor= \lfloor\sfrac{\hat rc}{\hat s}\rfloor$.
\begin{enumerate}
    \item If $c$ divides $\hat q$, then:
    \begin{itemize}
    \item ${\zeta}=(\frac{ p}{ q},\frac{r}{ s}) \in \mc F_n( q, s)$ where $\hat s=s$, $\hat q=cq$, $\hat p=p+a q$ and $c\hat r=r+a s$,
        \item $\hat \theta \in (\frac{2 \pi\hat p}{\hat q},\frac{2\pi \hat r}{\hat s})$ if and only if $\hat \theta=\frac{1}{c}(\theta+2\pi a)$ for $\theta \in (\frac{2\pi p}{ q},\frac{2\pi r}{ s})$.
    \end{itemize}
    \item If $c$ divides $\hat s$, then:
    \begin{itemize}
    \item ${\zeta}=(\frac{ s-r}{ s},\frac{q-p}{ q}) \in \mc F_n( q, s)$ where $\hat q=s$, $\hat s=cq$, $\hat pc=s-r+a s$ and $\hat r=q-p+a q$, 
    \item $\hat \theta \in (\frac{2 \pi\hat p}{\hat q},\frac{2\pi \hat r}{\hat s})$ if and only if $\hat \theta=\frac{1}{c}(\theta'+2\pi a)$ for $\theta' \in (\frac{2\pi (s-r)}{ s},\frac{2\pi(q-p)}{q})$.
    \end{itemize}
\end{enumerate}
\end{corollary}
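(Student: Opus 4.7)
The plan is to leverage the proof of Lemma \ref{lem:Fpairs}, which extracts more structural information than is stated in the lemma itself. Writing $c_{\hat q} = \gcd(c,\hat q)$, $c_{\hat s} = \gcd(c,\hat s)$ and $c_0 = c/(c_{\hat q} c_{\hat s})$, that argument yields $c_0 = 1$ together with $\{q,s\} = \{\hat q/c_{\hat q},\, \hat s/c_{\hat s}\}$, and rules out the possibility that both $c_{\hat q} \geq 2$ and $c_{\hat s}\geq 2$. Assuming $c \geq 2$ (the case $c=1$ is trivial with $a=0$), exactly one of $c_{\hat q} = c$ or $c_{\hat s} = c$ holds. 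Once this is in hand, the corollary reduces to careful bookkeeping with \eqref{eq:frac1} and \eqref{eq:frac2}.

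For Case 1 ($c\mid\hat q$), since $\hat q/c < \hat q < \hat s$, the only consistent pairing is $q = \hat q/c$ and $s = \hat s$. Substituting $\hat q = cq$ into \eqref{eq:frac1} and using $a = \lfloor \hat pc/\hat q\rfloor = \lfloor \hat rc/\hat s\rfloor$, one of the two pairs in $\mc F_n(q,s)$ becomes $(\sfrac{\hat p - aq}{q},\, \sfrac{\hat rc - as}{s})$. Since its denominators appear in increasing order, it must equal $(\sfrac{p}{q},\sfrac{r}{s})$, forcing $\hat p = p + aq$ and $c\hat r = r + as$. The angle correspondence then follows by direct substitution: $\sfrac{2\pi \hat p}{\hat q} = \sfrac{1}{c}(\sfrac{2\pi p}{q} + 2\pi a)$ and $\sfrac{2\pi \hat r}{\hat s} = \sfrac{1}{c}(\sfrac{2\pi r}{s} + 2\pi a)$, so the affine map $\hat\theta \mapsto \theta = c\hat\theta - 2\pi a$ is an order-preserving bijection between the two open intervals.

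For Case 2 ($c\mid \hat s$), we similarly obtain $\{q,s\} = \{\hat q,\, \hat s/c\}$, but now the ordering requires a brief check. Because $(\sfrac{p}{q},\sfrac{r}{s})$ lies in $\mc F_n$ we have $q + s > n$; combined with $\hat s \leq n$ this gives $c\hat q + \hat s > cn \geq c\hat s$, whence $\hat q > \hat s - \hat s/c \geq \hat s/c$ (using $c \geq 2$). Hence $q = \hat s/c$ and $s = \hat q$. Applying \eqref{eq:frac1} now produces the pair $(\sfrac{\hat pc - as}{s},\, \sfrac{\hat r - aq}{q})$, whose first coordinate has the larger denominator, so it coincides with $(\sfrac{s-r}{s},\sfrac{q-p}{q})$, yielding $\hat pc = s - r + as$ and $\hat r = q - p + aq$. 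The angle identity follows as before from $\sfrac{2\pi \hat p}{\hat q} = \sfrac{1}{c}(\sfrac{2\pi(s-r)}{s} + 2\pi a)$ and $\sfrac{2\pi\hat r}{\hat s} = \sfrac{1}{c}(\sfrac{2\pi(q-p)}{q} + 2\pi a)$, with $\theta' = c\hat\theta - 2\pi a$.

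The only ingredient that requires genuine argument is the inequality $\hat q > \hat s/c$ in Case 2, which pins down which of $\hat q$ and $\hat s/c$ plays the role of $q$ and which plays the role of $s$; everything else is routine algebra from the structural setup already present in the proof of Lemma \ref{lem:Fpairs}.
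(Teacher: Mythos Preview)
Your proof is correct and follows essentially the same approach as the paper. In both cases you and the paper identify the resulting Farey pair via \eqref{eq:frac1}, use the inequality $\hat q_0<\hat q<\hat s$ (respectively $\hat s_0<\hat q$) to determine which denominator is $q$ and which is $s$, and then read off the numerator identities and the affine relation between $\hat\theta$ and $\theta$. Your Case~2 inequality $\hat q>\hat s-\hat s/c\geq \hat s/c$ is exactly the paper's argument (which phrases it as ``$\hat q<\hat s_0$ would force $n<2\hat s/c\leq n$'') written directly rather than by contradiction; the content is identical.
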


\begin{proof}
\begin{enumerate}
    \item Assuming ${\hat\zeta}=(\frac{ \hat p}{ \hat q},\frac{\hat r}{ \hat s}) \in \mc F_n(\hat q, \hat s),$ $c|\hat q$ and $\hat q=c\hat q_0$, we have:
    $$\left (\frac{\hat p c}{\hat q}-a, \frac{\hat rc}{\hat s}-a\right )=\left (\frac{\hat p-a \hat q_0}{\hat q_0}, \frac{\hat r c-a \hat s}{\hat s}\right ) \in \mc F_n(q,s)$$
    by \eqref{eq:frac1}. Since $\hat q_0<\hat q<\hat s$, this implies: 
$\hat q_0=q$, $\hat s=s$, $\hat p-a \hat q_0=p$, and $c\hat r- a \hat s=r$, as desired. Now:
$$ \left (\frac{2 \pi\hat p c}{\hat q},\frac{2\pi \hat r c}{\hat s}\right )= \left (\frac{2 \pi(p+a q) }{q},\frac{2\pi(r+a s)}{s}\right)=\left (\frac{2 \pi p }{q}+2 \pi a,\frac{2\pi r}{s}+ 2 \pi a\right).$$
The second part of the claim follows. 
\item Let $c|\hat s$, then $\hat s=c\hat s_0$ which gives:

$$\left(\frac{\hat p c}{\hat q}-a, \frac{\hat rc}{\hat s}-a \right)=\left (\frac{\hat p c- a \hat q}{\hat q}, \frac{\hat r-a\hat s_0}{\hat s_0} \right)\in \mc F_n(q,s).$$ 

Since $\hat q< \hat s$, we have  either $\hat s_0<\hat q$ or $\hat q<\hat s_0<\hat s$. The latter implies $n<2\hat s_0$ i.e $n<2 \hat s/c$ which is not possible as $c\geq 2$. Thus, taking $\hat q>\hat s_0$ we must have $\hat q=s$, $\hat s=cq$, $\hat p c=s-r+as$ and $\hat r=q-p+aq.$

Further,  
\begin{align*}
     \left (\frac{2 \pi\hat p c}{\hat q},\frac{2\pi \hat r c}{\hat s}\right )   =\left( \frac{2\pi (s-r)}{s}+2\pi a, \frac{2\pi(q-p)}{q}+2\pi a\right) .\\
   \end{align*}
 
Thus, $\hat \theta c=\theta'+2\pi a $, where $\theta' \in ( \frac{2\pi (s-r)}{s}, \frac{2\pi(q-p)}{q})$.
\end{enumerate}
\end{proof}

\begin{example}
Let $\mc F_n(\hat q, \hat s)=\mc F_6(5,6)=\{(\sfrac{4}{5},\sfrac{5}{6}),(\sfrac{1}{6},\sfrac{1}{5})\}$, $c=3$, and $a:=\lfloor \frac{\hat pc}{\hat q}\rfloor=\lfloor \frac{\hat rc}{s}\rfloor=2$. Then $ F_6(5,6)\star 3=\mc F_6(2,5)$. In particular, 
$$(e^{\frac{4\cdot 2 \pi i }{5}})^3=e^{\frac{2\cdot 2 \pi i }{5}} \text{, } (e^{\frac{5\cdot 2 \pi i }{6}})^3=e^{\frac{ 2 \pi i }{2}},$$
and  $\hat \theta \in (\frac{4 \cdot 2 \pi}{5},\frac{5\cdot 2 \pi }{6})$ if and only if $\theta'=3\hat \theta-2 \cdot 2\pi\in (\frac{2 \cdot 2\pi}{5},\frac{2\pi}{2})$.
 \end{example}

Now that we understand, when the endpoints of a Karpelevi\v c arc are powers of endpoints of another Karpelevi\v c arc, we want to know when the whole arc is mapped to another arc. To this end, we compute the derivative of the modulus $\rho_n$ with respect to the argument $\theta$ of a Karpelevi\v c arc at the endpoints. 

\begin{lemma} \label{lem:deriv} 
Suppose that  $(\frac{p}{q},\frac{r}{s})$, $q < s$, is a Farey pair and for  $\theta \in [\sfrac{2 \pi p}{q},\sfrac{2 \pi r}{s}]$ let the point on the boundary of $\Theta_n$ with the argument $\theta$ be given by $\rho_n(\theta) e^{\ii \theta}$. 
Then 
	\begin{equation}\label{eq:derivq}
	 \frac{\partial \rho_n(\theta)}{\partial \theta}\Big |_{\theta =\frac{2 \pi p}{q} }\sin \left(\frac{2 \pi }{qd} \right)=\cos \left(\frac{2 \pi }{qd}\right)-1	\end{equation}
	 
	 and 
	 \begin{equation}\label{eq:derivs}
	 \frac{\partial \rho_n(\theta)}{\partial \theta}\Big |_{\theta =\frac{2 \pi r}{s} }\sin \left(\frac{2 \pi }{s} \right)=1-\cos \left(\frac{2 \pi }{s}\right).	\end{equation}
\end{lemma}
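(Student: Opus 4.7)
The plan is to prove this by implicit differentiation of the defining equation \eqref{eq:implicit} from Theorem \ref{thm:gcd200}. Write $F(\mu,\theta) = \mu^{s_1}\sin(q\theta) - \mu^{qd_1}\sin\!\bigl(\tfrac{s_1}{d_1}\theta - \tfrac{2\pi r}{d}\bigr) - \sin\!\bigl((q-\tfrac{s_1}{d_1})\theta + \tfrac{2\pi r}{d}\bigr)$, so that $F(\mu(\theta),\theta)=0$ and $\rho_n(\theta)=\mu(\theta)^{d_1}$. Then by the chain rule and implicit function theorem,
\[
\frac{\partial \rho_n}{\partial \theta} = d_1 \mu^{d_1-1}\,\frac{\partial \mu}{\partial \theta} = -\,d_1\mu^{d_1-1}\,\frac{F_\theta}{F_\mu}.
\]
At the endpoints $\theta = 2\pi p/q$ and $\theta = 2\pi r/s$, the corresponding boundary point lies on the unit circle, so $\rho_n = 1$ and hence $\mu = 1$. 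It then remains to evaluate $F_\theta$ and $F_\mu$ at $\mu=1$ at each endpoint.

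The essential inputs are the Farey relation $qr - ps = 1$ and the identities $s = s_1\delta$, $d = d_1\delta$ (so that $s/d = s_1/d_1$). At $\theta = 2\pi p/q$, a short calculation using $qr-ps=1$ gives
\[
\tfrac{s_1}{d_1}\theta - \tfrac{2\pi r}{d} = -\tfrac{2\pi}{qd}, \qquad (q-\tfrac{s_1}{d_1})\theta + \tfrac{2\pi r}{d} \equiv \tfrac{2\pi}{qd}\pmod{2\pi},
\]
while $\sin(q\theta)=0$ and $\cos(q\theta)=1$. Substituting into $F_\mu$ yields $qd_1\sin(2\pi/(qd))$, and into $F_\theta$ yields $q(1-\cos(2\pi/(qd)))$ after the $(s_1/d_1)\cos(\cdot)$ terms cancel. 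The ratio delivers \eqref{eq:derivq}.

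At $\theta = 2\pi r/s$, the identity $s_1/d_1 = s/d$ forces $\tfrac{s_1}{d_1}\theta - \tfrac{2\pi r}{d} = 0$, so its sine vanishes and its cosine is $1$. The remaining trigonometric arguments $q\theta$ and $(q-\tfrac{s_1}{d_1})\theta + \tfrac{2\pi r}{d}$ both reduce to $2\pi qr/s$. One then computes
\[
F_\mu\bigl(1,\tfrac{2\pi r}{s}\bigr) = s_1\sin(2\pi qr/s), \qquad F_\theta\bigl(1,\tfrac{2\pi r}{s}\bigr) = \tfrac{s_1}{d_1}\bigl(\cos(2\pi qr/s)-1\bigr),
\]
after the $q\cos(\cdot)$ terms cancel. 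Forming the ratio and invoking $qr \equiv 1 \pmod s$ (again from $qr-ps=1$) to replace $2\pi qr/s$ by $2\pi/s$ inside the trig functions produces \eqref{eq:derivs}.

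The only real obstacle is bookkeeping: matching the arguments of the three trigonometric terms in $F$ at each endpoint and verifying the cancellations in $F_\theta$. Both simplifications hinge critically on $qr - ps = 1$ and on $s_1 d = s d_1$, and I would present the calculations for each endpoint separately to keep the simplifications transparent.
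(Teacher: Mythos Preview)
Your proposal is correct and follows essentially the same approach as the paper: implicit differentiation of \eqref{eq:implicit}, evaluation at $\mu=1$, and substitution of the two endpoint values of $\theta$, followed by the chain rule $\partial\rho_n/\partial\theta = d_1\mu^{d_1-1}\partial\mu/\partial\theta$. Your framing via $F_\theta/F_\mu$ and the explicit identification of the roles of $qr-ps=1$ and $s_1 d = s d_1$ make the cancellations more transparent than in the paper, but the underlying computation is identical.
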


\begin{proof} 
Implicit differentiation of equation \eqref{eq:implicit} gives us:
\begin{align*}
\mu ^{s_1} q \cos(q \theta)+s_1\mu^{s_1-1} \frac{\partial \mu(\theta)}{\partial \theta}\Big |_{\theta}\sin (q\theta)-\\
\Big[ \mu^{qd_1} \frac{s_1}{d_1}\cos\left(\frac{s_1}{d_1}\theta  -\frac{2 \pi r}{d} \right)+qd_1\mu^{qd_1-1}\frac{\partial \mu(\theta)}{\partial \theta}\Big |_{\theta}\sin\left(\frac{s_1}{d_1}\theta  -\frac{2 \pi r}{d} \right)  \Big]-\\
\Big[(q-\frac{s_1}{d_1})\cos\left((q-\frac{s_1}{d_1})\theta+\frac{2 \pi r}{d} \right)  \Big]=0.
\end{align*}

Taking $\mu=1$ we have:
\begin{align*}
 q\cos(q\theta)+s_1\frac{\partial \mu(\theta)}{\partial \theta}\Big |_{\theta}\sin(q\theta)-\\
  \Big[\frac{s_1}{d_1}\cos\Big(\frac{s_1}{d_1}\theta-\frac{2\pi r}{d}\Big)+qd_1 \frac{\partial \mu(\theta)}{\partial \theta} \sin\Big(\frac{s_1}{d_1}\theta-\frac{2\pi r}{d}\Big) \Big]-\\
 \Big[(q-\frac{s_1}{d_1})\cos\left((q-\frac{s_1}{d_1})\theta+\frac{2 \pi r}{d} \right)  \Big]=0.
\end{align*}

Inserting $\theta=\frac{2 \pi p}{q}$ and $\theta=\frac{2 \pi r}{s}$ in above we get:
\begin{align*}
    \frac{\partial \mu(\theta)}{\partial \theta}\Big |_{\theta=2\pi p/q}\sin\left( \frac{2\pi}{qd}\right)=\frac{1}{d_1}\left(\cos \left( \frac{2\pi}{qd}\right)-1\right)
\end{align*}
and
\begin{align*}
  \frac{\partial \mu(\theta)}{\partial \theta}\Big |_{\theta=2\pi r/s}\sin\left( \frac{2\pi}{s}\right)=\frac{1}{d_1}\left(1-\cos \left( \frac{2\pi}{s}\right)\right). 
\end{align*}
 
Noting that, $\rho_n(\theta)=\mu(\theta)^{d_1}$ which implies $\frac{\partial \rho_n(\theta)}{\partial \theta}=d_1 \mu(\theta)^{d_1-1}\frac{\partial \mu}{\partial \theta}$,  equations \eqref{eq:derivq} and \eqref{eq:derivs} follow. 
\end{proof}

\begin{corollary}\label{cor:derivative}
Let $\mc K_n(q,s)=\mc K_n(\hat q,\hat s)^c$,  $d:=\lfloor\sfrac{n}{q}\rfloor$ and  $\hat d:=\lfloor\sfrac{n}{\hat q}\rfloor$. Then:
\begin{enumerate}
    \item If $c$ divides $\hat q$, then $qd=\hat q \hat d$ and $s=\hat s$.
    \item If $c$ divides $\hat s$, then $qd=\hat s$ and $s=\hat q \hat d$.
\end{enumerate}
\end{corollary}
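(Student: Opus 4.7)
The plan is to exploit Lemma \ref{lem:deriv}, which gives explicit formulas for $\partial\rho_n/\partial\theta$ at each endpoint of a Karpelevi\v c arc, and compare these slopes across the two arcs $\mc K_n(q,s)$ and $\mc K_n(\hat q,\hat s)$. Since $\mc K_n(q,s)=\mc K_n(\hat q,\hat s)^c$, corresponding points on the two arcs satisfy $\rho_n(\theta)=\hat\rho_n(\hat\theta)^c$, where $\hat\theta=(\theta+2\pi a)/c$ in case 1 and $\hat\theta=(\theta'+2\pi a)/c$ in case 2, by Corollary \ref{corr:endpoints}. The central observation is that when we differentiate $\rho_n(\theta)=\hat\rho_n(\hat\theta)^c$ and evaluate at an endpoint where $\rho_n=\hat\rho_n=1$, the factor $1/c$ from $d\hat\theta/d\theta$ cancels the factor $c$ coming from the power, leaving the clean equality
\[
\frac{\partial \rho_n}{\partial \theta}\Big|_{\text{endpoint}}=\frac{\partial \hat\rho_n}{\partial \hat\theta}\Big|_{\text{endpoint}}.
\]

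For case 1 ($c\mid\hat q$), Corollary \ref{corr:endpoints} pairs $\theta=2\pi p/q$ with $\hat\theta=2\pi\hat p/\hat q$, and $\theta=2\pi r/s$ with $\hat\theta=2\pi\hat r/\hat s$. Applying Lemma \ref{lem:deriv} on both sides of the endpoint equality and using $(\cos x-1)/\sin x=-\tan(x/2)$ and $(1-\cos x)/\sin x=\tan(x/2)$ reduces the problem to
\[
\tan(\pi/(qd))=\tan(\pi/(\hat q\hat d))\quad\text{and}\quad\tan(\pi/s)=\tan(\pi/\hat s).
\]
All arguments lie in $(0,\pi/2]$, where $\tan$ is strictly monotone, so $qd=\hat q\hat d$ and $s=\hat s$ as required.

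For case 2 ($c\mid\hat s$), Corollary \ref{corr:endpoints} says that the endpoints $2\pi\hat p/\hat q$ and $2\pi\hat r/\hat s$ of the hat-arc correspond to the endpoints $2\pi(s-r)/s$ and $2\pi(q-p)/q$ of the conjugate arc of $\mc K_n(q,s)$. One needs the additional input that, by conjugate symmetry $\rho_n(2\pi-\theta)=\rho_n(\theta)$, the derivative at a conjugate endpoint is the negative of the derivative at the original endpoint, so the slopes on the conjugate arc at $2\pi(s-r)/s$ and $2\pi(q-p)/q$ are $\tan(\pi/s)$ and $-\tan(\pi/(qd))$ respectively. Matching with the Lemma \ref{lem:deriv} values at $2\pi\hat p/\hat q$ and $2\pi\hat r/\hat s$ and again using injectivity of $\tan(x/2)$ gives $\hat q\hat d=s$ and $\hat s=qd$.

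The main potential obstacle is the sign bookkeeping and endpoint pairing in case 2, but conceptually the whole corollary is a one-step consequence of Lemma \ref{lem:deriv}, Corollary \ref{corr:endpoints}, and the injectivity of $\tan$ on $(0,\pi/2)$; no further analysis of the Ito polynomial is needed.
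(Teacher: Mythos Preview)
Your approach is essentially identical to the paper's: differentiate the identity $\rho_n(\theta)=\rho_n(\hat\theta)^c$ at the endpoints, use that $\rho_n=1$ there so that the factor $c$ cancels the $1/c$ from the chain rule, apply Lemma \ref{lem:deriv} on both sides, and conclude by injectivity of $F(x)=(1-\cos x)/\sin x=\tan(x/2)$ on $(0,\pi)$. The paper writes $F$ instead of $\tan(x/2)$, but that is the same function.

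One bookkeeping slip in your case 2: by the conjugate symmetry $\rho_n(2\pi-\theta)=\rho_n(\theta)$ you correctly note that the derivative changes sign, but applying this gives $\rho_n'(2\pi(s-r)/s)=-\tan(\pi/s)$ and $\rho_n'(2\pi(q-p)/q)=+\tan(\pi/(qd))$, the opposite signs from what you wrote. With your stated signs, matching against the hat-arc values $-\tan(\pi/(\hat q\hat d))$ and $+\tan(\pi/\hat s)$ at $2\pi\hat p/\hat q$ and $2\pi\hat r/\hat s$ would produce an impossible equation like $\tan(\pi/s)=-\tan(\pi/(\hat q\hat d))$. The paper avoids this by instead parametrising via $\hat\theta=\frac{1}{c}(2\pi-\theta+2\pi a)$ directly from the original arc, so that $\partial\hat\theta/\partial\theta=-1/c$ absorbs the sign; either route works once the signs are tracked correctly, and you end up with $s=\hat q\hat d$ and $qd=\hat s$ as claimed.
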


\begin{proof}
Let $F(x):=\frac{1-\cos(x)}{\sin(x)}$. Note that $F(x)$ is injective on $[0,2\pi]$. 

If $\mc K_n(q,s)=\mc K_n(\hat q,\hat s)^c$, then for all $\theta \in (\frac{2 \pi p}{q}, \frac{2 
\pi r}{s})$ we have  $\rho_n(\theta)=(\rho_n(\hat \theta))^c$ for $\hat \theta \in \arg(\hat q, \hat s)$, where by Corollary \ref{corr:endpoints}:
\begin{equation}\label{eq:thetarelq}
    \hat \theta=\frac{1}{c}\left (\theta+2 \pi a\right ) \text{ if } c \text{ divides } \hat q, 
    \end{equation}
    \begin{equation}\label{eq:thetarels}
    \hat \theta=\frac{1}{c}\left (2\pi - \theta+2 \pi a\right) \text{ if } c \text{ divides } \hat s.
\end{equation}
In particular:
\begin{equation}\label{eq:rhoder1}
  \frac{\partial (\rho_n(\hat \theta))^c}{\partial \theta}\Big |_{\theta =\frac{2 \pi p}{q}}
        =\frac{\partial \rho_n(\theta)}{\partial \theta}\Big |_{\theta =\frac{2 \pi p}{q}},
\end{equation}
\begin{equation}\label{eq:rhoder2}
 \frac{\partial (\rho_n(\hat \theta))^c}{\partial \theta}\Big |_{\theta =\frac{2 \pi r}{s}}
        =\frac{\partial \rho_n(\theta)}{\partial \theta}\Big |_{\theta =\frac{2 \pi r}{s}},
\end{equation}
and
  \begin{align*}
    \frac{\partial  (\rho_n(\hat \theta))^c}{\partial  \theta}
        &=c \rho_n(\hat \theta)^{c-1}\frac{ \cdot \partial \rho_n(\hat \theta)}{\partial \hat \theta}       \frac{\partial \hat\theta}{\partial \theta}.
        \end{align*}
 \begin{enumerate}
    \item If $c$ divides $\hat q$, then :
     $$\frac{\partial \hat\theta}{\partial \theta}=\frac{1}{c},$$
     by equation \eqref{eq:thetarelq}.
     Now, by Lemma by \ref{lem:deriv}:
    \begin{align*}
     \frac{\partial \rho_n(\theta)}{\partial  \theta}\Big |_{\theta =\frac{2 \pi p}{q}}&= \frac{\cos \left(\frac{2 \pi }{qd}\right)-1}{\sin \left(\frac{2 \pi }{qd}\right)}=-F\left (\frac{2 \pi }{qd}\right ),\\
    \frac{\partial  (\rho_n(\hat \theta))^c}{\partial  \theta}\Big |_{\theta =\frac{2 \pi p}{q} }&=\frac{\partial \rho_n(\hat \theta)}{\partial \hat \theta}\Big |_{\hat \theta =\frac{2 \pi \hat p}{\hat q} }=\frac{\cos \left(\frac{2 \pi }{\hat q \hat d}\right)-1}{\sin \left(\frac{2 \pi }{\hat q \hat d}\right)}=-F\left (\frac{2 \pi }{\hat q\hat d}\right ), 
\end{align*}
since $\rho_n(\hat \theta)=1$ at the endpoints, and $\hat \theta=\sfrac{2\pi \hat p}{\hat q}$ when $\theta=\sfrac{2\pi p}{q}$. Hence, $F\left (\frac{2 \pi }{qd}\right )=F\left (\frac{2 \pi }{\hat q\hat d}\right )$ by equation \eqref{eq:rhoder1} and by injectivity of $F$ we have $\hat q \hat d=qd$.

Similarly, 
$
\frac{\partial \rho_n(\theta)}{\partial  \theta}\Big |_{\theta =\frac{2 \pi r}{s}}
=F\left (\frac{2 \pi }{s}\right )
$ and $\frac{\partial (\rho_n(\hat \theta))^c}{\partial \theta}\Big |_{\theta =\frac{2 \pi r}{s}} =F\left (\frac{2 \pi }{\hat s}\right ),
$
which implies $\hat s=s$.

\item  If $c$ divides $\hat s$, then :
     $$\frac{\partial \hat\theta}{\partial \theta}=-\frac{1}{c},$$
     by equation \eqref{eq:thetarels}.
     Now, by Lemma by \ref{lem:deriv}:
    \begin{align*}
     \frac{\partial \rho_n(\theta)}{\partial  \theta}\Big |_{\theta =\frac{2 \pi p}{q}}&= \frac{\cos \left(\frac{2 \pi }{qd}\right)-1}{\sin \left(\frac{2 \pi }{qd}\right)}=-F\left (\frac{2 \pi }{qd}\right ),\\
    \frac{\partial  (\rho_n(\hat \theta))^c}{\partial  \theta}\Big |_{\theta =\frac{2 \pi p}{q} }&=-\frac{\partial \rho_n(\hat \theta)}{\partial \hat \theta}\Big |_{\hat \theta =\frac{2 \pi \hat r}{\hat s} }=\frac{\cos \left(\frac{2 \pi }{\hat s}\right)-1}{\sin \left(\frac{2 \pi }{\hat s}\right)}=-F\left (\frac{2 \pi }{\hat s}\right ), 
\end{align*}
since $\rho_n(\hat \theta)=1$ at the endpoints, and $\hat \theta=\sfrac{2\pi \hat r}{\hat s}$ when $\theta=\sfrac{2\pi p}{q}$. Hence, $F\left (\frac{2 \pi }{qd}\right )=F\left (\frac{2 \pi }{\hat s}\right )$ by equation \eqref{eq:rhoder1} and by injectivity of $F$ we have $\hat s=qd$. Similarly, identities
$
\frac{\partial \rho_n(\theta)}{\partial  \theta}\Big |_{\theta =\frac{2 \pi r}{s}}
=F\left (\frac{2 \pi }{s}\right )
$ and
$\frac{\partial (\rho_n(\hat \theta))^c}{\partial \theta}\Big |_{\theta =\frac{2 \pi r}{s}}=F\left (\frac{2 \pi }{\hat q \hat d}\right )
$
imply $\hat q \hat d=s$.
\end{enumerate}
\end{proof}

The following example shows that $\mc F_n(\hat q, \hat s)\star c=\mc F_n(q,s)$ does not imply $\mc K_n(\hat q, \hat s)^c=\mc K_n(q,s)$.
\begin{example}
Let $n=27$, $\mc F_n(\hat q, \hat s)=\mc F_{27}(4,27)$, $\mc F_n(q,s)=\mc F_{27}(2,27)$ and $c=2$. Then:
$$\mc F_{27}(4,27)\star 2=\mc F_{27}(2,27).$$
We have $q=2$, $\hat q=4$, $d=13$ and $\hat d=6$. Clearly $c$ divides $\hat q$ and $qd =26\ne \hat q \hat d=24 $. Therefore $\mc K_n(\hat q, \hat s)^c=\mc K_{27}(4,27)^2 \ne \mc K_{27}(2,27)=\mc K_n(q,s)$ by Corollary \ref{cor:derivative}.
\end{example}

\begin{theorem}\label{thm:powers}
Let $\mc K_n(q,s) \in \mc K_n$ and $d:=\lfloor\sfrac{n}{q}\rfloor$. Then $\mc K_n(q,s)=\mc K_n(\hat q, \hat s)^{c}$ if, and only if, one of the following situations occurs:
\begin{enumerate}
    \item\label{thm:powers-i1} $\mc K_n(q,s)=\mc K_n(c q, s)^c$, for any $c$ that divides $d$, satisfies $\gcd(c,s)=1$ and $cq<s$.
    \item\label{thm:powers-i2} $\mc K_n(q,s)=\mc K_n(s,qd)^d$ if $\delta:=\gcd(d,s)=1$.
    \end{enumerate}
    \end{theorem}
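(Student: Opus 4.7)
My plan is to handle necessity and sufficiency separately, leveraging Corollaries \ref{corr:endpoints} and \ref{cor:derivative} for necessity, and direct algebraic manipulation of the reduced Ito polynomials for sufficiency.

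For necessity, suppose $\mc K_n(q,s) = \mc K_n(\hat q, \hat s)^c$. Raising endpoints to the $c$-th power sends $\mc F_n(\hat q, \hat s)$ onto $\mc F_n(q,s)$, so $\mc F_n(\hat q, \hat s) \star c = \mc F_n(q,s)$, and Lemma \ref{lem:Fpairs} then forces $c \mid \hat q$ or $c \mid \hat s$. In the first case, Corollary \ref{corr:endpoints}(1) gives $\hat s = s$ and $\hat q = cq$, while Corollary \ref{cor:derivative}(1) gives $qd = \hat q\hat d = cq\hat d$, so $\hat d = d/c$ and $c \mid d$; the Farey pair conditions then reduce to $\gcd(c,s) = 1$ and $cq < s$, which is condition (1). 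In the second case, Corollary \ref{corr:endpoints}(2) yields $\hat q = s$ and $\hat s = cq$, while Corollary \ref{cor:derivative}(2) forces $qd = cq$ and $s = s\hat d$, so $c = d$, $\hat d = 1$, and $(\hat q, \hat s) = (s, qd)$; the condition $\gcd(s, qd) = 1$ then reduces to $\delta = 1$, giving condition (2).

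For sufficiency, I would verify each identity by algebraic substitution in the reduced Ito polynomial. In case (1), the reduced Ito polynomial of $\mc K_n(cq, s)$, which has $\hat d = d/c$, is $\hat\phi_\alpha(\hat t) = (\hat t^{cq} - \beta)^{d/c} - \alpha^{d/c} \hat t^{qd - s}$; raising $\hat\phi_\alpha(\hat t)=0$ to the $c$-th power and setting $t = \hat t^c$ yields $(t^q - \beta)^d = \alpha^d t^{qd - s}$, i.e.\ $\phi_\alpha(t)=0$ for $\mc K_n(q,s)$. In case (2), the reduced Ito polynomial for $\mc K_n(s, qd)$, which has $\hat d = 1$, can be written as $\hat \phi_{\hat \alpha}(\hat t) = \hat t^{qd} - \hat \beta \hat t^{qd - s} - \hat \alpha$; the crucial step is to pair the substitution $t = \hat t^d$ with the swap $\hat \alpha = \beta$, $\hat \beta = \alpha$, so that $\hat t^{qd} = \hat\beta \hat t^{qd - s} + \hat\alpha$ rearranges to $t^q - \beta = \alpha \hat t^{qd - s}$, and raising to the $d$-th power yields $\phi_\alpha(t) = 0$.

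These polynomial identities, together with the endpoint and argument-range matching supplied by Corollary \ref{corr:endpoints}, close the argument: for each $\theta \in \arg(q,s)$ the boundary point $\rho_n(\theta)e^{\ii\theta}$ is uniquely determined by Theorem \ref{thm:gcd200}, and the image of a point of $\mc K_n(\hat q,\hat s)$ under $z\mapsto z^c$ both lies on the correct argument ray and satisfies $\phi_\alpha = 0$, so it must coincide with that boundary point. The main subtlety I anticipate is the reversed parameter correspondence $\hat \alpha = 1-\alpha$ required in case (2); it is essential to the algebra but not obvious at first sight. A secondary obstacle is that $\phi_\alpha$ admits roots off the Karpelevi\v c arc, so identifying the correct branch requires the endpoint, derivative, and monotonicity information from Corollaries \ref{corr:endpoints} and \ref{cor:derivative}, rather than the polynomial identity alone.
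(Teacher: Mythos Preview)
Your necessity argument is correct and matches the paper's proof line for line: Lemma~\ref{lem:Fpairs} forces $c\mid\hat q$ or $c\mid\hat s$, and then Corollaries~\ref{corr:endpoints} and~\ref{cor:derivative} pin down the parameters exactly as you describe.

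For sufficiency you take a genuinely different route. The paper never manipulates the Ito polynomials $\phi_\alpha$; instead it works directly with the implicit equation~\eqref{eq:implicit} of Theorem~\ref{thm:gcd200}, which for each argument $\theta$ determines $\mu$ (and hence $\rho_n(\theta)=\mu^{d_1}$) \emph{uniquely}. After substituting $\hat q=cq$, $\hat s=s$, $\hat d_1c=d_1$, $\hat s_1=s_1$, $\hat\theta=(\theta+2\pi a)/c$ in case~(1), or $\hat q=s$, $\hat s=qd$, $\hat d=\hat d_1=1$, $\hat\theta=(\theta'+2\pi a)/d$ in case~(2), the paper checks term by term that the trigonometric equation for $\hat\mu$ coincides with the one for $\mu$. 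Uniqueness of the positive solution gives $\hat\mu=\mu$, and then $(\hat\mu^{\hat d_1}e^{i\hat\theta})^c=\mu^{d_1}e^{i\theta}$ is immediate. No branch question ever arises.

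Your polynomial identities are correct, including the $\hat\alpha=1-\alpha$ swap in case~(2), and they give a pleasant conceptual explanation of the phenomenon. But the obstacle you flag is more serious than your proposed remedy handles. Your closing sentence asserts that a point with argument $\theta$ satisfying $\phi_\alpha=0$ ``must coincide with'' the boundary point $\rho_n(\theta)e^{i\theta}$; this is not what Theorem~\ref{thm:gcd200} says. That theorem ties the boundary point to one \emph{specific} value of $\alpha$, and for other $\alpha$ the polynomial $\phi_\alpha$ can have roots with argument $\theta$ and strictly smaller modulus. Nothing in your outline forces the $\alpha$ inherited from $\hat\alpha$ to be that specific value. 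Corollaries~\ref{corr:endpoints} and~\ref{cor:derivative} only constrain the curve and its derivative at the two endpoints, and no monotonicity statement is proved anywhere in the paper, so invoking ``monotonicity information'' is not yet a proof. One could likely close the gap by a continuity argument tracking the distinguished boundary root as $\alpha$ runs over $[0,1]$, but that is real additional work; the paper's use of~\eqref{eq:implicit} avoids the issue entirely because that equation already singles out the boundary root.
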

    
\begin{proof}
Let $\mc K_n(q,s) \in \mc K_n$ and $\mc K_n(q,s)=\mc K_n(\hat q, \hat s)^{c}$. Then $\mc F_n(\hat q, \hat s)\star c=\mc F_n(q,s)$, and either $c$ divides $\hat q$ or $c$ divides $\hat s$ by Lemma \ref{lem:Fpairs}. We consider each case separately. 

Assume first that $c$ divides $\hat q$ (and $\gcd(c,\hat s)=1$). Hence, $\hat q=q c$ and $\hat s=s$ by Corollary \ref{corr:endpoints}. Clearly, we also have   $\gcd(c,s)=1$. By Corollary \ref{cor:derivative} we now have  $qd=qc \hat d$ which implies $\hat{d}c=d$. Since $\hat q<\hat s$, $qc<s$ has to hold. With this, we have shown that if $\mc K_n(q,s)=\mc K_n(\hat q, \hat s)^{c}$ and $c$ divides $\hat q$, then the conditions listed in item \ref{thm:powers-i1} have to hold. 

Now, assuming $c$ divides $d$, $\gcd(c,s)=1$, and $cq<s$, we want to prove $\mc K_n(q,s)=\mc K_n(c q, s)^c$. Let $\theta \in (\sfrac{2 \pi p}{q},\sfrac{2 \pi r}{s})$, where $(\sfrac{p}{q},\sfrac{r}{s}) \in \mc F_n(q,s)$. Theorem \ref{thm:gcd200} tells us that  $\rho_n(\theta)e^{i\theta}=\mu^{d_1} e^{i \theta} \in \mc K_n(q,s)$ if and only if $\mu$ is the unique solution to \eqref{eq:implicit}. 

To study $\hat \lambda \in \mc K_n(\hat q, \hat s)^{c}$ with the argument $\theta$, we first use Corollary \ref{corr:endpoints} to  identify parameters associated with Farey pairs in $\mc K_n(\hat q, \hat s)$:
\begin{equation}\label{eq:hat_parameters1}
\hat s=s, \, \hat q=cq, \, \hat p=p+a q, \, \text{ and } c\hat r=r+a s,
\end{equation}
and note that $\hat \theta:=\frac{1}{c} (\theta+2 \pi a) \in (\sfrac{2 \pi\hat p}{\hat q},\sfrac{2 \pi\hat r}{\hat s})\subset \arg(\hat q, \hat s)$. Now, $\gcd(c,s)=1$ implies $\gcd(c \hat d,s)=\gcd(\hat d,s)$ which gives us
$\hat \delta=\gcd(\hat d,\hat s)=\gcd(c \hat d,s)=\gcd(d,s)=\delta.$ 
Hence: 
\begin{equation}\label{eq:hat_parameters2}
    \hat \delta=\delta, \, \hat s_1=s_1,\, \hat d_1c=d_1, 
\end{equation}
where $\hat s_1:=s_1(\hat q, \hat s)$ and $\hat d_1:=d_1(\hat q,\hat s)$. 

By Theorem  \ref{thm:gcd200}, $\hat \mu^{\hat d_1} e^{i \hat \theta}\in \mc K_n(c q,s)$ if and only if $\hat \mu$ satisfies equation $\eqref{eq:implicit}$ for $\hat \theta$ and the parameters associated with $(\sfrac{\hat p}{\hat q},\sfrac{\hat r}{\hat s})$:
\begin{align}\label{eq:hat_implicit}
\hat \mu^{\hat s_1}\sin(\hat q \hat \theta)-\hat \mu^{\hat q \hat d_1} \sin\Big(\frac{\hat s_1}{\hat d_1}\hat \theta-\frac{2\pi \hat r}{\hat d}\Big)-\sin\Big((\hat q-\frac{\hat s_1}{\hat d_1})\hat \theta+\frac{2\pi \hat r}{\hat d}\Big)=0.
\end{align}
Using \eqref{eq:hat_parameters1} and  \eqref{eq:hat_parameters2} we get:
\begin{align*}
\hat \mu^{\hat s_1} \sin(\hat q\hat \theta)&=\hat \mu^{s_1}\sin\left(q(\theta+2 
\pi a) \right)=\hat \mu^{s_1} \sin(q \theta), \\
\mu^{\hat q\hat d_1} \sin \left(\frac{\hat s_1}{\hat d_1}\hat \theta  -\frac{2 \pi \hat r}{\hat d} \right)&={\hat\mu}^{ qd_1} \sin \left(\frac{s_1}{d_1}(\theta+2 
\pi a)  -\frac{2 \pi (r+a s)}{d} \right) \\
&={\hat\mu}^{ qd_1} \sin \left(\frac{s_1}{d_1}\theta-\frac{2 \pi r}{d} \right),\\
\sin\left((\hat q-\frac{\hat s_1}{\hat d_1})\hat \theta+\frac{2 \pi \hat r}{\hat d} \right)&=\sin\left((c q-\frac{c s_1}{ d_1})\frac{(\theta +2 \pi a)}{c}+\frac{2 \pi (r+a s)}{d} \right)\\
&=\sin\left((q-\frac{s_1}{d_1})\theta+\frac{2 \pi r}{d} \right),
\end{align*}
which proves that the coefficients of $
\hat \mu$ in \eqref{eq:hat_implicit} and the coefficients of $\mu$ in \eqref{eq:implicit} agree. Since
\eqref{eq:implicit} defines $\mu$ uniquely, we conclude $\hat \mu=\mu$. Finally, 
$$\hat \lambda^c=(\hat \mu^{\hat d_1}e^{i \hat \theta})^c=\mu^{d_1} e^{i \theta}=\rho_n(\theta)e^{i \theta}$$
completes the proof that $\mc K_n(cq,s)^c=\mc K_n(q,s)$.

Next, assume that $c$ divides $\hat s$ (and $\gcd(c, \hat q)=1$). Hence, $\hat s= c q$ and $\hat q=s$ by Corollary \ref{corr:endpoints}. Now, by Corollary \ref{cor:derivative} we have $qd=cq$ and $s=s \hat d$ which gives $c=d$ and $\hat d=1$. Therefore, $\delta=\gcd(d,s)=\gcd(c,\hat q)=1$, as required. This shows that if $\mc K_n(q,s)=\mc K_n(\hat q, \hat s)^{c}$ and $c$ divides $\hat s$, then the condition in item \ref{thm:powers-i2} has to hold.

Now, assuming $\gcd(d,s)=1$, we want to prove $\mc K_n(q,s)=\mc K_n(s,qd)^d$. Since, $c$ divides $\hat s$, let $\theta' \in  (\sfrac{2 \pi (s-r)}{s},\sfrac{2 \pi (q-p)}{q})$, where $\zeta=(\sfrac{(s-r)}{s},\sfrac{(q-p)}{q}) \in \mc F_n(q,s)$. By Remark  \ref{remark:gcd200}, $\rho_n e^{i\theta'}=\mu^{d_1} e^{i \theta'}\in \mc K_n(q,s)$ if and only if $\mu$ satisfies $\eqref{eq:implicit}$ for parameters associated with $\zeta$ and $\theta=2 \pi-\theta'$. That is:

$$
\mu^{s_1} \sin(q(2\pi -\theta'))- \mu^{qd_1} \sin \left(\frac{s_1}{d_1}(2\pi-\theta')  -\frac{2 \pi r}{d} \right)-\sin\left((q-\frac{s_1}{d_1})(2
\pi -\theta')+\frac{2 \pi r}{d} \right)=0,$$
or equivalently:

\begin{align}\label{conjugateimplicit}
-\mu^{s_1} \sin(q\theta')+ \mu^{qd_1} \sin \left(\frac{s_1}{d_1}\theta'  +\frac{2 \pi (r-s)}{d} \right)+\sin\left((q-\frac{s_1}{d_1})\theta'+\frac{2 \pi (s-r)}{d} \right)=0. 
\end{align}

To understand $\hat \lambda \in \mc K_n(\hat q, \hat s)^{c}$ with the argument $\theta'$, we first use Corollary \ref{corr:endpoints} to  identify parameters associated with Farey pairs in $\mc K_n(\hat q, \hat s)$:
\begin{equation}\label{eq:hat_parameters3}
\hat q=s, \, \hat s=qc, \, \hat p c=s-r+as, \, \text{ and }\hat r=q-p+aq,
\end{equation}
and $\hat \theta=\frac{1}{c}(\theta'+2\pi a)\in (\sfrac{2 \pi\hat p}{\hat q},\sfrac{2 \pi\hat r}{\hat s})\subset \arg(\hat q, \hat s) $. In addition:  $c=d$, $\hat d=1$, and  $\hat d_1=1.$

Theorem  \ref{thm:gcd200} tells us that $\hat \lambda= \hat \mu^{\hat d_1} e^{i \hat \theta} \in \mc K_n(s,qd)$ if and only of $\hat \mu$ satisfies \eqref{eq:hat_implicit} for parameters associated with $(\sfrac{\hat p}{\hat q},\sfrac{\hat r}{\hat s})$ and $\hat \theta$. Using the parameters defined above, we get: 
\begin{align*}
    \hat \mu^{\hat s_1} \sin(\hat q\hat \theta)&=\hat \mu^{qd}\sin\left(\frac{s}{d}\theta'+2\pi\frac{as}{d} \right),\\
\hat\mu^{\hat q\hat d_1} \sin \left(\frac{\hat s_1}{\hat d_1}\hat \theta  -\frac{2 \pi \hat r}{\hat d} \right)&={\hat\mu}^{s} \sin \left(q\theta' \right),\\
\sin\left((\hat q-\frac{\hat s_1}{\hat d_1})\hat \theta+\frac{2 \pi \hat r}{\hat d} \right)&=\sin\left((\frac{s}{d}-q)(\theta'+2 \pi a) \right)\\
&=\sin\left((\frac{s}{d}-q)\theta' + \frac{2 \pi as}{d} \right).
\end{align*}
Since $as=\hat pd-s+r$ implies that the coefficients of $\hat \mu$ in \eqref{eq:hat_implicit} and the coefficients of $\mu$ in \eqref{conjugateimplicit} are the same, we can conclude that $\hat \mu=\mu$. Hence, $\hat \lambda^d=(\hat \mu^{\hat d_1}e^{i \hat \theta})^d=\mu^{d_1} e^{i\theta'}=\rho_n(\theta)e^{i\theta'}$ i.e $\mc K_n(q,s)=\mc K_n(s,qd)^d$, as required.
\end{proof}

\begin{example}
Theorem \ref{thm:powers} allows us to identify all Karpelevi\v c arcs that are powers of another arc. In particular, for $n=8$ we list all such cases below:
\begin{enumerate}
    \item $\mc K_8(4,7)=\mc K_8(7,8)^2$
    \item $\mc K_8(2,7)=\mc K_8(7,8)^4=K_8(4,7)^2$
    \item $\mc K_8(3,7)=\mc K_8(7,6)^2$
    \item $\mc K_8(4,5)=\mc K_8(5,8)^2.$
\end{enumerate}

As an example let us consider $\mc K_8(4,7)$, with $q=4$, $s=7$, $d=2$ and $\delta=1$. Hence, $\mc K_8(4,7)=\mc K_8(7,8)^2$, by item \ref{thm:powers-i1} in the theorem. 
Considering $\mc K_8(2,7)$ with $q=2$, $s=7$, $d=4$, $\delta=1$, we have $\mc K_8(2,7)=\mc K_8(7,8)^4$ by item \ref{thm:powers-i2}. Also, $c=2$ divides $d=4$ which implies $\mc K_8(2,7)=\mc K_8(4,7)^2$ by item \ref{thm:powers-i1}. 
\end{example}

Theorem \ref{thm:powers} answers the question, given $\mc K_{n}(q,s)$, what are all possible arcs $\mc K_{n}(\hat q,\hat s)$ whose power is $\mc K_{n}(q,s)$. Since the theorem gives the complete characterisation, it also answers the question, given $\mc K_{n}(\hat q,\hat s)$, what are all possible arcs $\mc K_{n}(q,s)$ that are powers of $\mc K_{n}(\hat q,\hat s)$. This point of view is given in the result below. 
\begin{corollary}\label{cor:powers1}
Let $\mc K_n(\hat q,\hat s) \in \mc K_n$ and $\hat d:=\lfloor\sfrac{n}{q}\rfloor$. Then $\mc K_n(\hat q, \hat s)^{c}=\mc K_n(q,s) \in \mc K_n$ if and only if one of the following situations occurs:
\begin{enumerate}
    \item\label{thm:powers-i1} $\mc K_n(\hat q, \hat s)^c=\mc K_n(\sfrac{\hat q}{c},\hat s)$ for all $c$ that divide $\hat q$ and satisfy $|\hat s-\hat q \hat d|<\sfrac{\hat q}{c}$.
    \item\label{thm:powers-i2} $\mc K_n(\hat q,\hat s)^c=\mc K_n(\sfrac{\hat s}{c},\hat q)$, for all $c$ that divide $\hat s$ and satisfy $n-\hat q<\sfrac{\hat s}{c}$.
    \end{enumerate}
\end{corollary}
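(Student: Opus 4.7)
The plan is to read Theorem~\ref{thm:powers} in reverse: whereas that theorem fixes $\mc K_n(q,s)$ and describes all $(\hat q,\hat s,c)$ with $\mc K_n(q,s)=\mc K_n(\hat q,\hat s)^c$, the corollary instead fixes $(\hat q,\hat s)$ and $c$ and asks which $\mc K_n(q,s)\in\mc K_n$ arise as $\mc K_n(\hat q,\hat s)^c$. By Lemma~\ref{lem:Fpairs}, any such $c$ must divide either $\hat q$ or $\hat s$, and these two subcases correspond respectively to items~(1) and~(2) of the corollary.

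First I would handle $c\mid\hat q$. Corollary~\ref{corr:endpoints}(1) forces $q=\hat q/c$ and $s=\hat s$, so I need to check the hypotheses of case~(1) of Theorem~\ref{thm:powers}: $c\mid d$ with $d=\lfloor n/q\rfloor=\lfloor nc/\hat q\rfloor$, $\gcd(c,\hat s)=1$, and $\hat q<\hat s$, together with the Farey validity $\hat q/c+\hat s>n$ required for $\mc K_n(\hat q/c,\hat s)\in\mc K_n$. Coprimality and ordering are automatic from $c\mid\hat q$, $\gcd(\hat q,\hat s)=1$, and the standing assumption on $(\hat q,\hat s)$. For the divisibility, writing $n=\hat q\hat d+r$ with $0\le r<\hat q$, one computes $d=c\hat d+\lfloor rc/\hat q\rfloor$; since $\lfloor rc/\hat q\rfloor\in\{0,\dots,c-1\}$, the condition $c\mid d$ is equivalent to $n-\hat q\hat d<\hat q/c$. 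Combining this divisibility inequality with the Farey inequality $\hat s>n-\hat q/c$ and the basic bounds $\hat q\hat d\le n$, $\hat s\le n$ produces the symmetric bracket $\hat q\hat d-\hat q/c<\hat s<\hat q\hat d+\hat q/c$, i.e., $|\hat s-\hat q\hat d|<\hat q/c$ as stated.

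For $c\mid\hat s$, Corollary~\ref{corr:endpoints}(2) forces $q=\hat s/c$ and $s=\hat q$. Case~(2) of Theorem~\ref{thm:powers} identifies the exponent $c$ with $d=\lfloor n/q\rfloor$, which becomes $c=\lfloor nc/\hat s\rfloor$, equivalent to $n-\hat s<\hat s/c$. The coprimality $\gcd(c,\hat q)=1$ is automatic from $c\mid\hat s$ and $\gcd(\hat q,\hat s)=1$. The Farey validity of $(\hat s/c,\hat q)$ is $\hat s/c+\hat q>n$, i.e., $n-\hat q<\hat s/c$; since $\hat q<\hat s$, this is strictly stronger than $n-\hat s<\hat s/c$, so the single inequality $n-\hat q<\hat s/c$ suffices, matching the statement.

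The main obstacle is the condition translation in case~(1): the divisibility inequality $n-\hat q\hat d<\hat q/c$ and the Farey inequality $\hat s>n-\hat q/c$ need to be merged into the single symmetric bracket $|\hat s-\hat q\hat d|<\hat q/c$, and verifying both directions requires a careful use of the bounds $\hat q\hat d\le n<\hat q(\hat d+1)$ alongside $\hat q+\hat s>n$ and $\hat s\le n$. Once the translation is secured, the forward and backward implications in both cases reduce to direct applications of Theorem~\ref{thm:powers}.
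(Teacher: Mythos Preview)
Your overall strategy is exactly what the paper intends: the corollary is stated there without a separate proof, simply as the reformulation of Theorem~\ref{thm:powers} obtained by fixing $(\hat q,\hat s)$ rather than $(q,s)$. Your treatment of case~(2) is correct, and so is the forward implication in case~(1): the divisibility condition $n-\hat q\hat d<\hat q/c$ together with Farey validity $\hat s+\hat q/c>n$ and the bounds $\hat q\hat d\le n$, $\hat s\le n$ do yield the bracket $|\hat s-\hat q\hat d|<\hat q/c$.

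However, the reverse implication in case~(1), which you flag as the ``main obstacle'' and assert can be secured with careful use of those same bounds, does not go through. Take $n=10$, $\hat q=4$, $\hat s=7$, $c=2$: here $\hat d=2$, $\hat q\hat d=8$, and $|\hat s-\hat q\hat d|=1<2=\hat q/c$, so the bracket holds; yet $n-\hat q\hat d=2\not<2$ and $\hat s+\hat q/c=9\not>10$, and indeed $\mc K_{10}(2,7)\notin\mc K_{10}$ since $2+7\le 10$. (For a variant where only one side fails, try $n=14$, $\hat q=6$, $\hat s=11$, $c=2$.) No rearrangement of $\hat q\hat d\le n<\hat q(\hat d+1)$, $\hat q+\hat s>n$, $\hat s\le n$ will close this, because those bounds are all satisfied in the example. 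The bracket is therefore only necessary, not sufficient; item~(1) as written appears to be missing an explicit Farey-validity constraint for $(\hat q/c,\hat s)$. This is not a defect in your plan so much as in the translation target: your argument correctly establishes the ``only if'' direction, and the gap you would encounter in the ``if'' direction is real.
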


\section{Stochastic matrices and  associated digraphs, notation and background}\label{sec:background2}

In the second part of the paper, we consider the question, of when a stochastic matrix, that realises an eigenvalue on the border of the Karpelevi\v c region, can be written as a power of another stochastic matrix. This section is dedicated to the necessary background and notation.

\subsection{Notation}

When taking powers of matrices and the associated digraphs, we will repeatedly encounter modular arithmetic. Since we want to use the standard numbering of the rows and columns of $n \times n$ matrices from $1$ to $n$, it is convenient to define: 
$\langle k \rangle_n:=1+((k-1) \mod n)$. In this notation $\langle k \rangle_n \in \{1,\ldots,n\}$. 

Given $n \in \N$ we define the following vectors:
\begin{align*}
    {\bf a}(n)&:=\npmatrix{1 & 2 & \ldots  & n},\\
    {\bf a_0}(n)&:=\npmatrix{0 & 1 & \ldots  & n-1},\\
    {\bf e}&:=\npmatrix{1 & 1 & \ldots  & 1},
\end{align*}
where the size of ${\bf e}$ will be clear from the context. 
 In addition, we will depend on standard operations to build new vectors.  For example, $i\cdot {\bf a}(n)=\npmatrix{i & 2 i & \ldots  & n i}$, $i\cdot {\bf e}+ {\bf a}(n)=\npmatrix{i+1 &  i+2 & \ldots  & i+n}$, etc.  Furthermore, for ${\bf v}=\npmatrix{v_1 & \ldots v_k} \in ~ \N_0^k$ we denote: 
   $$\T ({\bf v}):=\{\npmatrix{v_i & v_{\langle i+1 \rangle_k} & \ldots & v_{\langle i+k-1\rangle_k}}; i=1,\ldots,k \}$$
   to be the set of vectors obtained from ${\bf v}$ by cyclic permutations of its elements.

A digraph $G$ is defined by its vertex set $V(G)=\{1,\ldots,n\}$ and edge set $E(G)\subseteq V(G)\times V(G)$.  For ${\bf v}=\npmatrix{v_1 & \ldots v_k} \in \N^k$ we denote by $C({\bf v})$ the $k$-cycle with $V(C({\bf v}))=\{{v}_i; i=1,\ldots,k\}$ and $E(C({\bf v}))=\{({v}_i,{v}_{\langle 1+i \rangle_k}); i=1,\ldots, k\}$. Clearly, $C({\bf v})=C({\bf u})$ if and only if $\T({\bf v})=\T({\bf u})$. The \emph{weight of a cycle} is defined to be the product of the weights on the edges of that cycle. Furthermore, $P({\bf v})$ will denote the path with $V(P({\bf v}))=\{{ v}_i; i=1,\ldots,k\}$ and $E(P({\bf v}))=\{({ v}_i,{v}_{i+1}); i=1,\ldots, k-1\}.$
Let $\mc E \subset V(G)\times V(G)$, then $G + \mc E$ is defined to be the digraph with $V(G + \mc E)=V(G)$ and $E(G + \mc E)=E(G)+\mc E$. With $G^{(b)}$ we denote the strong power of $G$, i.e. the digraph on vertex set $V(G)$, where $(v_1,v_2) \in E(G^{(b)})$ if and only if $v_1$ and $v_2$ are at distance $b$ in $G$. Given a nonnegative $n\times n$ matrix $A$, we define $\Gamma(A)$ to be \emph{the digraph associated with $A$} defined by $V(\Gamma(A))=\{1,2,\ldots,n\}$ and $E(\Gamma(A))=\{(i,j); (A)_{i,j} \neq 0\}$.

For integers $q,s,n \in \N$ satisfying $q<s$, $\gcd(q,s)=1$ and $q+s>n$, we denote by  $\mathcal M_n(q,s)$ the set of all $n \times n$ stochastic matrices with an eigenvalue from $\mc K_n(q,s)$. The set of sparsest matrices in $\mathcal M_n(q,s)$ is denoted by $\mathcal M_n^0(q,s)$. More precisely, $A \in \mathcal M_n^0(q,s)$ if and only if $A=(a_{ij}) \in \mathcal M_n(q,s)$ and there does not exist $A'=(a'_{ij})\in \mathcal M_n(q,s)$ satisfying $\{(i,j); a'_{ij} \neq 0\}\subset \{(i,j); a_{ij} \neq 0\}$. (It turns out that, for fixed $n$, $q$ and $s$, all matrices in $\mathcal M^0_n(q,s)$ have the same number of nonzero elements, hence $\mathcal M^0_n(q,s)$ can also be defined as the set of matrices in 
$\mathcal M_n(q,s)$ with the least number of non-zero entries.)

Let $\mc S$ be a set of $n \times n$ matrices and $c \in \N$, then $\mc S^c:=\{A^c; A\in \mc S\}$. For $n \times m$ matrix $M$, let $\vecm(M)$ denote the $m\cdot n$ vector obtained by stacking the columns of the matrix $M$.

\subsection{Sparsest realisations of Ito polynomials}

In  \cite{kirklandsmigoc} the sparsest realizing matrices for Ito polynomials of degree $n$ were characterised. Given a reduced Ito polynomial $f_{\alpha}(t)$ of degree $n$ associated with $\mc K_n(q,s)$, the sparsest realisation of $f_{\alpha}(t)$ is (up to permutation similarity) uniquely defined by the associated digraph. Moreover, associated digraphs are precisely digraphs on $n$ vertices that contain one $s$-cycle and $d$ disjoint $q$-cycles.

Let $q,s,\hat q, \hat s, n$ and $c$ be such that $\mc K_n(q,s)=\mc K_n(\hat q,\hat s)^c$. Then:
\begin{align*}
\mc M_n(\hat q,\hat s)^c&\subset \mc M_n(q,s),\\
\mc M_n^0(\hat q,\hat s)^c&\subset \mc M_n^0(q,s),
\end{align*}
but $\mc M_n(\hat q,\hat s)^c\neq \mc M_n(q,s)$ and  $\mc M_n^0(\hat q,\hat s)^c\neq \mc M_n^0(q,s)$. In this section, we aim to characterise matrices in $\mc M_n^0(\hat q,\hat s)^c$ in terms of the associated digraphs. For this, we first recall 
the characterisation of $\mc M_n^0(q,s)$ given in \cite{kirklandsmigoc} that depends on separating the Karpelevi\v c arcs into four categories: Type 0, I, II, and III. This separation was introduced by Johnson and Paparella \cite{johnsonpaperella}. 

 While it is possible for the degree of the reduced Ito polynomial associated with a Farey pair $(\sfrac{p}{q},\sfrac{r}{s})$ of order $n$ to be less than $n$, we will consider only the situations when $n$ and the degree of the reduced Ito polynomial agree. 
 For example, the reduced Ito polynomials associated with $\mc K_{n}(3,14)$ for $n=15$ and $n=16$ are both equal to $f_{\alpha}(t)=(t^3-(1-\alpha))^5-\alpha^5 t,$ $\alpha \in[0,1]$. But since for $n=16$, this polynomial has a degree less than $n$, our investigation will not cover this case.

 Below we list the different types of reduced Ito polynomials of order $n$,  using the notation introduced in \cite{kirklandsmigoc}:
\begin{itemize}
\item Type 0: If $n=s, d=n, q=1$, then  $f_{\alpha}(t)=(t+\alpha-1)^n-\alpha^n$ for $\alpha \in[0,1]$.

\item Type I: If $n=s, d=1, q>n/2$, then $f_{\alpha}(t)=t^n-(1-\alpha) t^{n-q}-\alpha$ for $\alpha \in[0,1]$.

\item Type II: If $n=qd, d>1$, then $f_{\alpha}(t)=(t^q+\alpha-1)^{d}-\alpha^{d}t^{z}$, where $z=qd-s$ and $z \in \{1,\ldots,q-1\}$ for $\alpha \in[0,1]$.

\item Type III: If $n=s, d>1$, then $f_{\alpha}(t)=t^{y}(t^q+\alpha-1)^{d}-\alpha^d$, where $y=s-qd$ and $y \in \{1,\ldots,q-1\}$ for $\alpha \in[0,1]$. 
\end{itemize}

Next, we recall results from  \cite{kirklandsmigoc} that characterise the sparsest realising matrices for Type I, II, and III reduced Ito polynomials, through their associated digraphs.

\begin{theorem}[\cite{kirklandsmigoc}, Type I] \label{thm:TI_realisation}
Let $q,n \in \N$, $\gcd(q,n)=1$ and $2q>n>q$. 
Then for a stochastic matrix $A$, the following statements are equivalent:
\begin{enumerate}
\item $A \in \mc M_n^0(q,s).$ 
\item $\Gamma(A)$ is up to isomorphism equal to $\Gamma=C({\bf a}(n))+ \{(q, 1)\}$. 
\item $\Gamma(A)$ has one $n$-cycle, one $q$-cycle and no other cycles. 
\end{enumerate}
In addition, if the weight on the edge $(q,1)$ in $\Gamma(A)$ is equal to $(1-\alpha)$, then the characteristic polynomial of $A$ is $f_{\alpha}(t)=(t+\alpha-1)^n-\alpha^n$ .
\end{theorem}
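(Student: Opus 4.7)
The plan is to prove the three-way equivalence via $(2) \Rightarrow (3)$, $(3) \Rightarrow (2)$, and $(1) \Leftrightarrow (3)$, and then derive the characteristic polynomial directly from the cycle structure of $\Gamma(A)$.

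The implication $(2) \Rightarrow (3)$ is immediate: $\Gamma = C(\mathbf{a}(n)) + \{(q,1)\}$ is the Hamilton cycle together with a single chord, so it contains the $n$-cycle itself and the length-$q$ cycle $1 \to 2 \to \cdots \to q \to 1$; a single chord on a directed Hamilton cycle creates exactly one new directed cycle, so no further cycles exist. For $(3) \Rightarrow (2)$, I would relabel the vertices so that the $n$-cycle is $C(\mathbf{a}(n))$. Any edge of $\Gamma(A)$ not in the $n$-cycle is a chord, and a chord $(j,i)$ of $C(\mathbf{a}(n))$ creates exactly one new cycle, of length $\langle j-i+1\rangle_n$. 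Hence $\ell$ chords produce at least $\ell+1$ cycles in total, and combined with (3) this forces $\ell = 1$. The single chord must span $q-1$ Hamiltonian edges to produce a cycle of length $q$, and a cyclic relabeling places it at $(q,1)$, giving $\Gamma(A) = \Gamma$. The hypothesis $2q > n$ is consistent with this step because the complementary arc has length $n-q+1 < q$, so no accidental secondary $q$-cycle can arise.

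The equivalence $(1) \Leftrightarrow (3)$ follows from the general result of \cite{kirklandsmigoc} recalled above: sparsest realising digraphs for an Ito polynomial of $\mc K_n(q,s)$ consist of one $s$-cycle together with $d$ vertex-disjoint $q$-cycles. For Type I, where $s = n$ and $d = 1$, this specialises to one $n$-cycle and one $q$-cycle, and sparsity precludes any further cycle since each additional cycle would demand an edge beyond the $n+1$ of $\Gamma$; this gives $(1) \Rightarrow (3)$. The reverse direction holds because $\Gamma$ carries a concrete stochastic realiser of $f_\alpha$ with exactly $n+1$ nonzero entries, the known minimum for $\mc M_n^0(q,s)$.

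For the characteristic polynomial I would use $\det(xI - A) = \sum_{k=0}^{n} (-1)^k e_k(A)\, x^{n-k}$, where $e_k(A)$ is the sum of principal $k \times k$ minors and, by the Leibniz formula, equals the signed sum over vertex-disjoint cycle covers of $\Gamma(A)$ on $k$ vertices. Because the two cycles of $\Gamma$ share the vertices $1, \ldots, q$, they cannot appear together, so only three covers contribute: the empty cover, the $q$-cycle (weight $1-\alpha$), and the $n$-cycle (weight $\alpha$). A short sign computation then delivers the claimed $f_\alpha$. The main obstacle is the combinatorial bookkeeping in $(3) \Rightarrow (2)$, where one must carefully show that multi-chord configurations strictly increase the cycle count and that the single permissible chord is unique up to the cyclic symmetry of the Hamilton cycle.
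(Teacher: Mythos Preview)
This theorem is quoted from \cite{kirklandsmigoc} and is stated in the present paper without proof, so there is no in-paper argument to compare against. Your outline for $(2)\Leftrightarrow(3)$ is sound, and invoking the general characterisation from \cite{kirklandsmigoc} as a black box for $(1)\Leftrightarrow(3)$ is acceptable in context, since the theorem itself is a cited background result.

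There is, however, a genuine problem with your final paragraph. The polynomial displayed in the statement, $f_\alpha(t)=(t+\alpha-1)^n-\alpha^n$, is the Type~0 formula, not the Type~I one; this is evidently a typo in the paper (compare the list of reduced Ito polynomials immediately preceding the theorem, where Type~I reads $t^n-(1-\alpha)t^{n-q}-\alpha$). Your cycle-cover computation is correct in spirit but would \emph{not} produce the displayed polynomial: with exactly two cycles sharing vertices, the only nonempty linear subgraphs are the $q$-cycle of weight $1-\alpha$ and the $n$-cycle of weight $\alpha$, so the Coates expansion gives
\[
\det(tI-A)=t^n-(1-\alpha)\,t^{\,n-q}-\alpha.
\]
The ``short sign computation'' you promise therefore contradicts, rather than confirms, the formula as printed. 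You should flag the discrepancy instead of asserting agreement.
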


\begin{theorem}[\cite{kirklandsmigoc}, Type II] \label{thm:TII_realisation}
Let $n=qd$, $s=qd-z$ where $z \in \{1,\ldots, q-1\}$, $\gcd(q,s)=1$, $d\geq 2$. For a stochastic matrix $A$, the following statements are equivalent:
\begin{enumerate}
    \item $A \in \mc M_n^0(q,s).$
    \item $\Gamma(A)$ is up to isomorphism equal to 
    \begin{equation}\label{eq:TIIdiagraph}
    \Gamma=\cup_{i=1}^d \left(C\left(q(i-1)\cdot{\bf e}+{\bf a}(q)\right)+\left \{(iq-z_i,\langle 1+iq\rangle_n)\right \}\right),
    \end{equation}
    where $z_i$ correspond to some (ordered) partition of $z$ into $d$ parts: $z=z_1+\ldots+z_d$, $z_i\geq 0$. Furthermore, all the edges $(iq-z_i,\langle 1+iq\rangle_n),i=1,\ldots,d$, have equal weight $\alpha$, for some $\alpha \in (0,1)$.   

    \item $\Gamma(A)$ has $d$ $q$-cycles, one $s$-cycle, and no other cycles. All $q$-cycles have equal weight.
\end{enumerate}

For $A$ to have the characteristic polynomial $f_{\alpha}(t)=(t^q+\alpha-1)^{d}-\alpha^{d}t^{z}$, the weights on all the edges $(iq-z_i,\langle 1+iq\rangle_n)$,  $i=1\ldots,d$,  have to be equal to $\alpha$.
\end{theorem}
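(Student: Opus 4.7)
The plan is to connect the structure of $\Gamma(A)$ to the characteristic polynomial through the classical cycle-expansion of $\det(tI-A)$: the coefficient of $t^{n-k}$ equals $\sum_{\mathcal{L}} (-1)^{m(\mathcal{L})} w(\mathcal{L})$, where $\mathcal{L}$ ranges over collections of vertex-disjoint cycles in $\Gamma(A)$ of total length $k$, $m(\mathcal{L})$ is the number of cycles in $\mathcal{L}$, and $w(\mathcal{L})$ is the product of their weights. Expanding $f_\alpha(t) = (t^q + \alpha - 1)^d - \alpha^d t^z$ by the binomial theorem yields nonzero coefficients only at powers $t^{qj}$ for $0 \le j \le d$ and at $t^z$; since $\gcd(q,s) = 1$, we have $z \not\equiv 0 \pmod q$, so these exponents are genuinely distinct. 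This sparse support will drive the entire argument by tightly constraining which cycle families can appear in $\Gamma(A)$.

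For the combinatorial equivalence (2)$\Leftrightarrow$(3), I would observe that in the digraph defined by \eqref{eq:TIIdiagraph} the transversal edge $(iq - z_i, \langle 1 + iq\rangle_n)$ is the only edge leaving $C_i = C(q(i-1)\cdot{\bf e}+{\bf a}(q))$, so every cycle of $\Gamma$ is either one of $C_1,\ldots,C_d$ or threads through all $d$ blocks, using each transversal edge in turn along arcs of lengths $q - z_i - 1$; the total length of this threading cycle is $\sum_i(q - z_i - 1) + d = qd - z = s$. Conversely, if (3) holds then $n = qd$ forces the $d$ disjoint $q$-cycles to partition the vertex set into blocks of $q$ consecutive vertices (after cyclic relabelling), and the unique $s$-cycle then enters and leaves each block by exactly one edge, producing the shape in \eqref{eq:TIIdiagraph}. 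The equal-weight condition on the $q$-cycles in either direction corresponds to the requirement that the $d$ transversal edges share a common weight, which by stochasticity forces a common weight on the exiting cycle edge within each block.

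The main obstacle is the sparsity equivalence (1)$\Leftrightarrow$(3) together with the polynomial identification. The direction (3)$\Rightarrow$(1) is the easier half: the cycle-sum formula collapses to $\det(tI-A) = (t^q - w)^d - W\, t^z$, because the only disjoint cycle families in $\Gamma(A)$ are subsets of the $d$ $q$-cycles (all of common weight $w$, since the $s$-cycle shares vertices with every $q$-cycle) together with the $s$-cycle alone (weight $W$); matching with $f_\alpha$ forces $w = 1 - \alpha$ and $W = \alpha^d$, and stochasticity then distributes these weights uniquely, giving the final claim on the characteristic polynomial. The hard direction is (1)$\Rightarrow$(3): for $A \in \mc M_n^0(q,s)$, one reads the coefficients of the characteristic polynomial of $A$ against those of $f_\alpha(t)$; the coefficient $\binom{d}{j}(\alpha-1)^j$ at $t^{q(d-j)}$ is reproducible in $\Gamma(A)$ only through $j$ vertex-disjoint $q$-cycles, and pushing $j = d$ together with $n = qd$ leaves no vertices for extraneous cycle material, forcing exactly $d$ disjoint $q$-cycles; the term $-\alpha^d t^z$ then demands either an $s$-cycle or a disjoint union of shorter cycles of total length $s$ whose combined weight equals $\alpha^d$, and sparsity rules out the latter since additional short cycles require strictly more edges than a single threading $s$-cycle. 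The most delicate technical step is the formal elimination of alternative cycle configurations of the same edge count (for instance, a long cycle coexisting with fewer $q$-cycles augmented by bridging edges); this relies on a careful case analysis exploiting $\gcd(q,s) = 1$ and $q < s$ to rule out decompositions of $s$ into cycle lengths compatible with the residual structure left after the $d$ $q$-cycles are accounted for.
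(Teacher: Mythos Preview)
This theorem is not proved in the present paper: it is quoted verbatim from \cite{kirklandsmigoc} as a background result (note the attribution ``[\cite{kirklandsmigoc}, Type II]'' in the theorem header), and Section~\ref{sec:background2} explicitly says it is recalling the characterisation of $\mc M_n^0(q,s)$ from that reference. There is therefore no proof here against which your proposal can be compared.

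That said, your outline is broadly the right shape for how such results are established. The cycle-expansion of $\det(tI-A)$ is exactly the tool linking digraph structure to the Ito polynomial, and your reading of the support of $f_\alpha$ (nonzero only at $t^{qj}$ and $t^z$, with $z\not\equiv 0\pmod q$) is correct and is what drives the cycle constraints. The $(2)\Leftrightarrow(3)$ argument and the $(3)\Rightarrow(1)$ computation are essentially complete as written. The genuine work, as you acknowledge, is $(1)\Rightarrow(3)$: one must show that \emph{any} sparsest realisation has this cycle structure, not merely that this structure realises $f_\alpha$. Your sketch here is thin --- ``sparsity rules out the latter'' and ``a careful case analysis'' are placeholders rather than arguments. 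In particular, you have not explained why a matrix with $d$ disjoint $q$-cycles and a single $s$-cycle is actually \emph{sparsest} (this requires counting edges and showing no realisation uses fewer), nor why the equal-weight condition on the $q$-cycles is forced rather than merely sufficient. If you want a self-contained proof you should consult \cite{kirklandsmigoc} directly, where the full argument is carried out.
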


In the second item of the above theorem, a detailed description of the digraph $\Gamma(A)$ is given, where $z_i$ count the number of vertices on the $i$-th $q$-cycle  $C(q(i-1)\cdot{\bf e}+{\bf a}(q))$ that are not included in the (unique) $s$-cycle in $\Gamma(A)$. Different partitions of $z$ will result in different graphs $\Gamma(A)$. To identify the partitions that produce non-isomorphic graphs we offer the following definition. 
\begin{definition}\label{def:TIIpartition class}
Let $A \in \mc M_n^0(q,s)$, $n=qd$, $s=qd-z$, where $\Gamma(A)$ is isomorphic to a directed graph of the form \eqref{eq:TIIdiagraph} for the partition $z=z_1+\ldots+z_d$. Let $\bf z=\npmatrix{z_1 & \ldots & z_d}$. We say that $\T(\bf z)$ is \emph{the partition class} of $A$, denoted by $\P2(A)$.

\end{definition}

Note that the partition class $\T(\bf z)$ determines $A \in \mc M_n^0(q,s)$ up to permutation similarity, and the associated directed graph up to isomorphism.

\begin{theorem}[\cite{kirklandsmigoc}, Type III] \label{thm:TIII_realisation}
Let $n=qd+y$, where $\gcd(q,n)=1$, $d\geq 2$, and $y \in \{1,\ldots, q-1\}$. For a stochastic matrix $A$, the following statements are equivalent:
\begin{enumerate}
    \item $A \in \mc M_n^0(q,s).$
    \item\label{thm:TIII,3} $\Gamma(A)$ is up to isomorphism equal to 
    \begin{equation}\label{eq:TIIIdiagraph}
    \Gamma=C\left({\bf a}\left (n \right )\right) +  \left\{\left( iq+\sum_{k=1}^{i} y_k, 1+ (i-1) q+\sum_{k=1}^{i}y_k\right);i=1,\ldots,d\right\},
    \end{equation}
     where $y_i$ correspond to an (ordered) partition of $y$ into $d$ parts: $y=y_1+\ldots+y_d$, $y_i\geq 0.$ In addition, the edges $( iq+\sum_{k=1}^{i} y_k, 1+ (i-1) q+\sum_{k=1}^{i}y_k),i=1,\ldots,d$, all have the same weight.
    \item $\Gamma(A)$ has $d$ $q$-cycles, one $n$-cycle, and no other cycles, where the weights on each of the $q$-cycles are equal. 
\end{enumerate}
For $A$ to have the characteristic polynomial $f_{\alpha}(t)=t^{y}(t^q+\alpha-1)^{d}-\alpha^d$, the weights on the edges $( iq+\sum_{k=1}^{i} y_k, 1+ (i-1) q+\sum_{k=1}^{i}y_k)$, $i=1,\ldots,d$, have to be equal to $1-\alpha$.
\end{theorem}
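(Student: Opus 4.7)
The plan is to prove the three equivalences in the cycle $(1) \Rightarrow (3) \Rightarrow (2) \Rightarrow (1)$ and then extract the characteristic-polynomial assertion from the proof of $(2) \Leftrightarrow (1)$. The central tool throughout is the König--Coates cycle expansion
\begin{equation*}
\det(tI - A) \;=\; \sum_{\mathcal L}\, t^{\,n - |V(\mathcal L)|} \prod_{C \in \mathcal L} \bigl(-w(C)\bigr),
\end{equation*}
where $\mathcal L$ ranges over collections of pairwise vertex-disjoint directed cycles of $\Gamma(A)$, and $w(C)$ denotes the product of the weights on the edges of $C$.

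For $(1) \Rightarrow (3)$: a sparsest stochastic realisation of an eigenvalue on $\mc K_n(q,s)$ must have characteristic polynomial equal to the reduced Ito polynomial $f_\alpha(t)=t^{y}(t^q+\alpha-1)^{d}-\alpha^d$ for some $\alpha\in(0,1)$, since any additional cycle structure in $\Gamma(A)$ would contribute extra monomials to $\chi_A$ and therefore require extra nonzero entries, contradicting sparsity. Expanding $f_\alpha$ shows its only nonvanishing coefficients appear at degrees $n, n-q, n-2q, \ldots, y$ and at $0$. I would first argue that $\Gamma(A)$ contains no cycle of length strictly less than $q$: the shortest such cycle would contribute a nonvanishing monomial at a degree absent from $f_\alpha$, with no available disjoint-union partner to cancel it. An analogous shortest-first argument rules out cycles of any length in each forbidden gap $(kq,(k+1)q)$ for $k=1,\ldots,d-1$, and in $(qd,n)$. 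The nonzero coefficient of $t^y$ then forces exactly $d$ vertex-disjoint $q$-cycles (this monomial can only arise from a disjoint union of $d$ distinct $q$-cycles covering $qd=n-y$ vertices), and the nonzero constant term forces a Hamiltonian $n$-cycle, yielding (3).

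For $(3) \Rightarrow (2)$: relabel vertices so that the $n$-cycle is $C(\mathbf{a}(n))$. Each $q$-cycle must reuse $q-1$ consecutive edges of the $n$-cycle and close via a single back-chord from some vertex $v$ to $v-q+1$; any other configuration would either introduce an auxiliary cycle, contradicting the ``no other cycles'' clause of (3), or raise the edge count above the sparsity lower bound $n+d$. Hence each $q$-cycle is supported on a run of $q$ consecutive $n$-cycle vertices, the $d$ runs are vertex-disjoint by cycle disjointness, and the remaining $n-qd=y$ vertices distribute into $d$ gaps $y_1,\ldots,y_d$ between successive runs. Reading off the chord endpoints along the $n$-cycle yields exactly the digraph in \eqref{eq:TIIIdiagraph}. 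For $(2) \Rightarrow (1)$: applying the König--Coates formula to any matrix whose digraph is \eqref{eq:TIIIdiagraph} with all $d$ chord weights equal to $1-\alpha$ (so that row stochasticity forces each $q$-cycle weight to be $1-\alpha$ and the $n$-cycle weight to be $\alpha^d$) collapses all disjoint-union contributions to $f_\alpha(t)$, so $A\in \mc M_n(q,s)$; the edge count $n+d$ matches the lower bound, giving $A\in \mc M_n^0(q,s)$.

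The equal-weight claim in (2) and the chord-weight value $1-\alpha$ in the characteristic-polynomial statement both follow by coefficient matching: the coefficient of $t^{n-qk}$ in $\chi_A$ equals $(-1)^k$ times the elementary symmetric function $e_k(w_1,\ldots,w_d)$ in the individual $q$-cycle weights, while the corresponding coefficient of $f_\alpha$ equals $\binom{d}{k}(\alpha-1)^k$; matching for every $k$ forces $\prod_i(x-w_i)=(x-(1-\alpha))^d$, hence $w_1=\cdots=w_d=1-\alpha$. The main obstacle I anticipate is the shortest-cycle-first induction in the first step: the vanishing of each coefficient of $\chi_A$ is a signed-sum statement over all disjoint cycle unions of a prescribed total length, not a term-by-term vanishing, so I must argue that the contribution of any putative forbidden-length cycle cannot be cancelled by contributions from disjoint unions of shorter cycles. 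The saving grace is that at each inductive step, all strictly smaller cycle lengths have already been excluded, so no such cancelling partner can exist.
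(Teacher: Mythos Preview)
This theorem is not proved in the present paper; it is quoted as a background result from \cite{kirklandsmigoc}, so there is no in-paper proof to compare your proposal against.

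On the merits of your sketch: the directions $(3)\Rightarrow(2)$ and $(2)\Rightarrow(1)$, together with the elementary-symmetric-function matching that forces all $q$-cycle weights equal to $1-\alpha$, are essentially correct. The real difficulty lies in $(1)\Rightarrow(3)$, and here there is a genuine gap. Your opening move is to assert that any $A\in\mc M_n^0(q,s)$ must satisfy $\chi_A=f_\alpha$, with the justification that ``any additional cycle structure in $\Gamma(A)$ would contribute extra monomials to $\chi_A$ and therefore require extra nonzero entries, contradicting sparsity.'' This is circular: membership in $\mc M_n^0(q,s)$ only guarantees that \emph{one} eigenvalue of $A$ lies on the arc, and at this stage you know neither what the minimal cycle structure is nor that $\chi_A$ is forced to equal $f_\alpha$ rather than some other degree-$n$ monic polynomial sharing that single root. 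Pinning down $\chi_A=f_\alpha$ for sparsest realisations is in fact a substantial part of the argument in \cite{kirklandsmigoc} and requires more than the cycle expansion alone. A secondary issue: even granting $\chi_A=f_\alpha$, your shortest-first induction only excludes cycle lengths in the open gaps $(kq,(k+1)q)$ and $(qd,n)$; it does not rule out cycles of length $2q,3q,\ldots,(d-1)q$, whose contributions to the coefficient of $t^{n-kq}$ are entangled with those of $k$-tuples of disjoint $q$-cycles and cannot be separated by your argument as written.
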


In the theorem above, a digraph $\Gamma$ consists of $d$ $q$-cycles and an $s$-cycle that contains all the vertices of the $q$-cycles together with paths that connect the $q$-cycles. The partition of $y$ determines the number of vertices on those connecting paths. To recognize partitions that produce non-isomorphic graphs described in item \ref{thm:TIII,3} of the theorem above, we introduce the following definition. 

\begin{definition}\label{def:TIIIpartition class}
Let $A \in \mc M_n^0(q,s)$, $n=qd+y$, $\gcd(q,n)=1$, $d\geq 2$, where $\Gamma(A)$ is isomorphic to a directed graph of the form \eqref{eq:TIIIdiagraph} for the partition $y=y_1+\ldots+y_d$. Let $\bf y=\npmatrix{y_1 & \ldots & y_d}$. We say that $\T(\bf y)$ is \emph{the partition class} of $A$, denoted by $\PIII(A)$. 
\end{definition}

To summarise, in all cases (Type I, II, and III) the sparsest realising matrices for the arc $\mc K_n(q,s)$ are completely described by their digraphs and the weights on $q$-cycles in the digraphs. Moreover, the digraph for Type I is unique, for Type II and III the digraphs are associated with the partitions of $z$ and $y$, respectively.

\subsection{Power of a single cycle}

The following well-known lemma on the powers of a cycle will be needed to study the powers of digraphs associated with stochastic matrices. A short proof is given for completeness.

\begin{lemma}\label{lemma:powerofcycle} 
Let $C({\bf a}(k))$ be a cycle with $k$ vertices, $c,h\in \N$ so that $\gcd(c,k)=h$. Furthermore, let $k=k_1h$, $c=c_1h$, $c_1,k_1 \in \N$. Then $C({\bf a}(k))^{(c)}$ is a digraph with $h$ cycles of order $k_1$, i.e $C({\bf a}(k))^{(c)}=\cup_{i=1}^h C_i$, where 
\begin{align}\label{eq:cycle power}
    C_i= C\left (i \cdot {\bf e}+\langle c \cdot {\bf a}_0(k_1-1) \rangle_k \right).
\end{align}
\end{lemma}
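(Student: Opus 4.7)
The plan is to unfold the definition of the strong power and exploit the rigid structure of a directed cycle. In $C({\bf a}(k))$ every vertex $v$ has out-degree one, so the (unique) vertex reachable from $v$ by a directed walk of length $c$ is $\langle v+c\rangle_k$. Hence the edge set of $C({\bf a}(k))^{(c)}$ is exactly $\{(v,\langle v+c\rangle_k) : v=1,\dots,k\}$, and the underlying map $\sigma:v\mapsto\langle v+c\rangle_k$ is a permutation of $\{1,\dots,k\}$. Since every vertex has out-degree and in-degree one in $C({\bf a}(k))^{(c)}$, this digraph is automatically a disjoint union of directed cycles, one for each orbit of $\sigma$.

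The next step is to determine the lengths and number of these orbits. The orbit of $v$ under $\sigma$ is $\{\langle v+jc\rangle_k : j\ge 0\}$, and the smallest positive $j$ with $jc\equiv 0\pmod k$ is $k/\gcd(c,k)=k/h=k_1$ by elementary number theory. So every orbit has length $k_1$, and since the $k$ vertices are partitioned by orbits, there are precisely $k/k_1=h$ of them. This already establishes that $C({\bf a}(k))^{(c)}$ is a disjoint union of $h$ cycles, each of length $k_1$.

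To match the explicit form in the statement, I would exhibit $h$ representatives, one from each orbit. The subgroup $\langle c\rangle$ of $\mathbb Z/k\mathbb Z$ equals $h\mathbb Z/k\mathbb Z$, so its cosets are represented by $1,2,\dots,h$; no two of these vertices lie in the same orbit of $\sigma$, and every other vertex is $\sigma$-equivalent to exactly one of them. Consequently, the cycle containing vertex $i$ traces out the sequence $\bigl(i,\langle i+c\rangle_k,\langle i+2c\rangle_k,\dots,\langle i+(k_1-1)c\rangle_k\bigr)$, which is exactly $C\bigl(i\cdot{\bf e}+\langle c\cdot{\bf a}_0(k_1)\rangle_k\bigr)$ in the paper's notation (matching \eqref{eq:cycle power}, with the understanding that the index on ${\bf a}_0$ in the statement should list the $k_1$ residues $0,1,\dots,k_1-1$).

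I do not anticipate any genuine obstacle: the content is the standard fact that the $c$-th power of a cyclic permutation on $k$ letters decomposes into $\gcd(c,k)$ cycles of length $k/\gcd(c,k)$. The only care required is bookkeeping, namely verifying that the vertex labels $i=1,\dots,h$ are genuinely pairwise inequivalent modulo $\langle c\rangle$ and that the listed sequence traverses each orbit exactly once before closing up, which is immediate from $k_1 c\equiv 0\pmod k$ and the minimality of $k_1$.
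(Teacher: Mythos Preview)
Your proposal is correct and follows essentially the same approach as the paper's proof: both identify the unique outgoing edge $(v,\langle v+c\rangle_k)$ from each vertex, observe that the orbit of $v$ under $v\mapsto\langle v+c\rangle_k$ has length $k_1$ because $k_1$ is the least positive $j$ with $jc\equiv 0\pmod k$, and conclude by counting that there are $h=k/k_1$ such cycles. Your framing in terms of the permutation $\sigma$ and its orbit decomposition is slightly more explicit than the paper's, and your parenthetical remark about the index on ${\bf a}_0$ is well taken---the cycle $C_i$ should have $k_1$ vertices, so the vector listing the offsets ought to be ${\bf a}_0(k_1)=(0,1,\dots,k_1-1)$ rather than ${\bf a}_0(k_1-1)$.
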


\begin{proof} 
Since any vertex $i \in V(C({\bf a}(k)))$ has a unique vertex at a distance $c$ in $C({\bf a}(k))$, there is a unique edge outgoing from $i$: $(i, {\langle i+ c\rangle_k}) \in  E(C({\bf a}(k))^{(c)})$. Furthermore, for any $i \in \{1,2,\ldots,k\}$, the vertices ${\langle i+\ell c\rangle_k}$, $\ell \in \{0,\ldots,k_1-1\}$, form a cycle of order $k_1$ in $C({\bf a}(k))^{(c)}$. This follows from $\langle i +k_1c \rangle_k=\langle i \rangle _k$  and $\langle i +\ell c \rangle_k \neq \langle i \rangle _k$ for any $\ell<k_1$. Finally, $k=k_1 h$ implies that there are $h$ cycles of order $k_1$ in $C({\bf a}(k))^{(c)}$. \end{proof}

The next remark considers two special cases of the above lemma, that we will encounter in the upcoming sections.

\begin{remark}\label{remark:powerofcycle}
We consider the $c$-th strong power of $C({\bf a}(k))$ in the case when $c$ divides $k$ and in the case when $\gcd(c,k)=1$. 
    \begin{enumerate}
        \item If $k=k_1 c$ then $C({\bf a}(k))^{(c)}=\cup_{i=1}^c C_i$, where 
        $C_i= C(i \cdot {\bf e}+\langle c \cdot {\bf a}_0(k_1-1) \rangle_k).$
        
        \item If $\gcd(k,c)=1$, then $C({\bf a}(k))^{(c)}=C(\langle c\cdot {\bf a}(k)\rangle_k)$.   
   \end{enumerate}
\end{remark}

\section{Powers of Sparsest Realising Matrices}\label{sec:powers_of_matrices}

\subsection{Type II arc is a power of a Type I arc}

In this subsection we assume $n=dq$, $\gcd(qd,z)=1$, $z \in \{1,\ldots,q-1\}$. From Theorem \ref{thm:powers} and Corollary \ref{cor:powers1} we have: $$\mc K_{dq}(dq-z,qd)^d=\mc K_{dq}(q,qd-z).$$
The theorem below determines the partition class of $B^d$ for $B \in \mc M_n^0(dq-z,qd)$.

\begin{theorem}\label{thm:TI-TII}
Let $B \in \mc M_n^0(dq-z,qd)$, where $d\geq 2$, $n=qd$, $\gcd(qd,z)=1$ and $z \in \{1,\ldots,q-1\}$.
We define $\beta:=d -\langle z\rangle_d$, $w:=\frac{z -\langle z\rangle_d}{d}$, and for $j=1,\ldots,d$:
$$z(j):=\begin{cases}
w &\text{ for } j=1,\ldots,\beta \\ 
w+1  &\text{ for } j=\beta+1,\ldots,d.
\end{cases}$$
The elements of the partition class $\PII(B^d)$ consist of the parts $z(j)$, $j=1,\ldots,d$, where the part $z(j)$ is followed by the part $z(\langle j-\beta \rangle_d)$ in the partition.
\end{theorem}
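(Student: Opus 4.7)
My plan is to construct $\Gamma(B^d)$ explicitly from $\Gamma(B)$ and to extract the partition class by reading off the cycle structure. Since $q':=dq-z$ satisfies $\gcd(q',n)=\gcd(z,dq)=1$ and $n/2<q'<n$ (using $d\geq 2$ and $z<q$), Theorem~\ref{thm:TI_realisation} lets us assume $\Gamma(B)=C({\bf a}(n))+\{(q',1)\}$ up to isomorphism. I would then classify the walks of length $d$ in $\Gamma(B)$: a walk cannot use the chord $(q',1)$ twice, because two uses force a gap of exactly $q'$ steps between them, and $q'\geq (d-1)q+1>d$ makes this impossible in only $d$ steps. Hence every walk either avoids the chord (landing at $\langle i+d\rangle_n$) or uses it exactly once, which happens precisely when $i\in\{q'-d+1,\ldots,q'\}$, landing at $\langle i+d+z\rangle_n$. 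So the edges of $\Gamma(B^d)$ consist of the Type~(a) edges $(i,\langle i+d\rangle_n)$ for $i=1,\ldots,n$ together with the Type~(b) edges $(q'-d+j,j)$ for $j=1,\ldots,d$.

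The Type~(a) edges form $C({\bf a}(n))^{(d)}$, which by Lemma~\ref{lemma:powerofcycle} splits into $d$ disjoint $q$-cycles $C_k=C(k,k+d,\ldots,k+(q-1)d)$, with every vertex $v$ lying in $C_{\langle v\rangle_d}$. From $q'-d+j\equiv j-z\pmod d$ we see that each Type~(b) edge $(q'-d+j,j)$ runs from $C_{\langle j-z\rangle_d}$ into $C_j$, so the unique chord out of $C_k$ lands in $C_{\langle k+z\rangle_d}$. Because $\gcd(z,d)=1$, these chords string the $q$-cycles into a single $s$-cycle, traversed in the cyclic order $C_1,C_{\langle 1+z\rangle_d},C_{\langle 1+2z\rangle_d},\ldots$

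Finally, I would count how many vertices of each $C_k$ the $s$-cycle skips. The incoming chord enters $C_k$ at its smallest element $k$, while the outgoing chord leaves from $q'-d+\langle k+z\rangle_d$. Writing $z=wd+z_0$ with $z_0=\langle z\rangle_d$, this exit vertex equals $k+(q-w-1)d$ when $k+z_0\leq d$, i.e.\ $k\leq\beta$, and $k+(q-w-2)d$ otherwise, so the number of vertices of $C_k$ bypassed by the $s$-cycle is $w$ in the first case and $w+1$ in the second, matching $z(k)$ as defined in the statement. Applying Theorem~\ref{thm:TII_realisation}, $\PII(B^d)$ is then the cyclic sequence of skipped counts along the $s$-cycle, i.e.\ $z(j_1),z(j_2),\ldots$ with $j_{i+1}=\langle j_i+z\rangle_d$; the congruence $-\beta=z_0-d\equiv z\pmod d$ rewrites this as the stated successor rule $j\mapsto\langle j-\beta\rangle_d$. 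The main obstacle will be the bookkeeping in the skipped-vertex computation: the modular arithmetic identifying enter/exit vertices inside each $C_k$ must be verified in both cases $k\leq\beta$ and $k>\beta$, while the rest of the argument is a direct application of Lemma~\ref{lemma:powerofcycle} and Theorems~\ref{thm:TI_realisation} and~\ref{thm:TII_realisation}.
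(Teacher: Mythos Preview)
Your proposal is correct and follows essentially the same approach as the paper's proof: both start from $\Gamma(B)\cong C({\bf a}(n))+\{(q',1)\}$ via Theorem~\ref{thm:TI_realisation}, compute the $d$-th power digraph as the union of $d$ disjoint $q$-cycles (Lemma~\ref{lemma:powerofcycle}) together with $d$ chord edges, and then read off the partition class by locating the entry/exit vertex of the $s$-cycle on each $C_k$. The only cosmetic differences are that the paper indexes the chord edges by an offset $t$ (writing $e_t=(s-t+1,1+d-t)$) rather than by the destination $j$, and derives the successor rule $j\mapsto\langle j-\beta\rangle_d$ directly rather than first obtaining $j\mapsto\langle j+z\rangle_d$ and then using $z\equiv -\beta\pmod d$; your explicit argument that the chord cannot be used twice in a length-$d$ walk is a point the paper leaves implicit.
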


\begin{proof}
    Let $B \in  \mc M_n^0(\hat q, \hat s)$, where $\hat q=s=qd-z$, $\hat s=qd$, $\gcd(\hat q,n)=1$ and $2\hat q>n>\hat q$.

 \quad
 
\noindent{\bf Digraph.} By Theorem \ref{thm:TI_realisation}, $\Gamma(B)$ is isomorphic to $\widehat \Gamma=C({\bf{a}}(n))+\{(s,1)\}$. First, we want to find a digraph $\Gamma$ that is isomorphic to $\widehat \Gamma^{(d)}$.

Taking $k=n$, $c=d$ and $k_1=q$ in the first part of Remark \ref{remark:powerofcycle}, we get $C({\bf a}(n))^{(d)}=C({\bf a}(qd))^{(d)}=\cup_{j=1}^d C_j$, where

\begin{align}\label{eq:q-cycles eq}
   C_j:=C(j\cdot{\bf e}+d\cdot {\bf a}_0(q-1))
\end{align}
is a $q$-cycle in $\widehat \Gamma^{(d)}$.
Furthermore, the edge $\hat e=(s,1)$ in $\widehat \Gamma$ contributes the following edges in $\widehat \Gamma^{(d)}$:
\begin{align}\label{eq:TI-TIIconnecting edges}
    e_t:=(s-t+1,1+d-t),t=1,\ldots,d,
\end{align}
where we define $e_t:=(v_O(t),v_I(t))$ for later use. With this, we have determined $\widehat \Gamma^{(d)}$ to be:
$$\widehat \Gamma^{(d)}=\cup_{j=1}^d C_j+ \{e_t:t=1,\ldots,d\}.$$
In addition, if the edge $\hat e$ has weight $1-\alpha$ in $\widehat \Gamma$, then all the $q$-cycles, $C_j,j=1,\ldots,d$, have equal  weight $\alpha$ in $\widehat \Gamma^{(d)}$.

 Finally, note that the edges $e_t,t=1,\ldots,d$, connect the $q$-cycles  $C_j,j=1,\ldots,d$, to form an $s$-cycle in $\widehat \Gamma^{(d)}$. The $s$-cycle in $\widehat \Gamma^{(d)}$ consists of all the edges $e_t$ and certain paths that are subgraphs of $C_j$'s. The lengths of those paths will help us to determine the partition class of $B^d$. 

 \quad

\noindent{\bf The ordering of parts in the partition.}  
Let $z(j)$ be the number of vertices in $V(C_j)$ that do not belong to the $s$-cycle in $\widehat \Gamma^{(d)}$. Note that the elements of $\PII(B^d)$ consist of  $z(j)$ in some order. 

To determine the order of $z(j)$ in $\PII(B^d)$ we fix $j\in \{1,\ldots,d\}$ and note that there exists a unique $t'\in \{1,\ldots,d\}$ such that $v_I(t') \in V(C_j)$. We say that $e_{t'}$ is the incoming edge for $C_j$. From $v_I(t')=1+d-t'$ and $V(C_j)=\{j+\ell d:\ell=0,\ldots,q-1\}$, we conclude that if $e_{t'}$ is the incoming edge for our fixed $C_j$, then $j+t'$ is congruent $1$ modulo $d$. Thus, $e_{\langle 1-j \rangle_d}$ is the incoming edge for $C_j$. Similarly, there exists a unique $t$ such that $v_O(t) \in V(C_j)$ and we say that $e_t$ is the outgoing edge from $C_j$. For this to be true, we must have $j+t$ congruent to $1-z$ modulo $d$. This relation, and the fact that $t\in\{1,\ldots, d\}$, uniquely define $t$ to be:
\begin{align}\label{eq:j-values}
t:=\begin{cases}
\beta+1-j, &\text{ for }j=1,\ldots,\beta,\\
\beta+1+d-j, &\text{ for }j=\beta+1,\ldots,d,
\end{cases}    
\end{align}
where $\beta=d-\langle z \rangle_d$. For this $t$ we get:
$$v_I(t)=\begin{cases}
d+(j-\beta), &\text{ for }j=1,\ldots,\beta,\\
j-\beta,  &\text{ for }j=\beta+1,\ldots,d,
\end{cases}
$$
 from \eqref{eq:TI-TIIconnecting edges} and \eqref{eq:j-values}. In particular, $v_I(t) \in C_{\langle j-\beta \rangle_d}$ and 
 $C_{j}$ is connected to $C_{\langle j-\beta \rangle_{d}}$ in the $s$-cycle in $\widehat \Gamma^{(d)}$. Equivalently, $z(j)$ is followed by $z(\langle j-\beta\rangle_{d})$ in the partition class $\PII(B^d)$.

\quad

\noindent{\bf Parts of the partition.} To determine the parts that appear in the partition class of $B^d$, we need to determine the lengths of the paths that are intersections of the $q$-cycles $C_j$ and the $s$-cycle in $\widehat \Gamma^{(d)}$. In other words, the number of vertices in such a path is the same as the number of vertices $C_j$ is contributing to the $s$-cycle in $\widehat \Gamma^{(d)}$. It is clear from the discussion so far that $v_I(\langle 1-j \rangle_d)$ is the first and $v_O(\langle 1-z-j \rangle_d)$ is the last vertex on this path. From \eqref{eq:TI-TIIconnecting edges} and \eqref{eq:j-values} we get $v_I(\langle j-1 \rangle_d)=j$ and
\begin{align*}
v_O(t)&=
\begin{cases}
qd-z-\beta+j, &\text{ for }j=1,\ldots,\beta\\
qd-z-\beta-d+j, &\text{ for }j=\beta+1,\ldots,d,
\end{cases}\\
&=
\begin{cases}
(q-w-1)d+j, &\text{ for }j=1,\ldots,\beta\\
(q-w-2)d+j, &\text{ for }j=\beta+1,\ldots,d,
\end{cases}
\end{align*}
where $\beta=d-\langle z \rangle_d$ and $w:=\frac{z-\langle z \rangle_d}{d}$.

Let $k(j)$ denote the number of vertices from $C_j$ that are contained in the $s$-cycle.
Since vertices in $C_j$ are consecutively numbered by $j+\ell d, \ell=0,\ldots,q-1$, we have:
$$k(j)=\begin{cases}
q-w, &\text{ for }j=1,\ldots,\beta\\
q-w-1, &\text{ for }j=\beta+1,\ldots,d,
\end{cases}$$
or equivalently, $z(j):=q-k(j)$ is the number of vertices from $C_{j}$ that are not on the $s$-cycle. With this, we have determined the numbers that appear in the partition class $\PII(B^d)$. 
\end{proof}

\begin{remark}\label{remark:TI-TII}
For $B \in \mc M_n^0(qd-z,qd)$, let us define the row vector ${\bf z}$ consisting of parts of the partition class $\PII(B^d)$:
 ${\bf z}:=\npmatrix{z(1) & \ldots & z(d)}$ as in Theorem \ref{thm:TI-TII}. Since, $z(j)$ is followed by $z(\langle j-\beta \rangle_{d})$ in the partition class $\PII(B^d)$, we need to permute the elements of ${\bf z}$ to get the row vector:
 $${\bf z'}:=\npmatrix{z(1) & z(\langle 1-\beta \rangle_d) & \ldots & z(\langle 1-(d-1)\beta) \rangle_d}.$$
which belongs to the partition class $\PII(B^d)$. 
\end{remark}

\begin{example}
 For $n=120$ we have  $\mc K_n(15,109)=\mc K_n(109,120)^8$. Below we follow the steps of the proof of Theorem \ref{thm:TI-TII} for $B\in \mc M_n^0(109,120)$. We assume the notation developed in the proof. In particular, $q=15$, $s=109$, $d=8$, $\hat q=s=109$, $\hat s=qd=120$ and $z=qd-s=11$.
\quad

 \noindent{{\bf Digraph.} } Let the digraph of $B$ be  $\widehat \Gamma=C({\bf a}(120))+\{(109,1)\}$. Then  $\widehat \Gamma^{(8)}$ consists of $8$ cycles of order $15$: 
 $C(j\cdot {\bf e}+8 \cdot {\bf a}_0(15))$,  $j=1,\ldots,8$, and additional edges $\{(110-t,9-t):t=1,\ldots,8\}$ that connect the $15$-cycles to form a $109$-cycle in $\widehat \Gamma^{(8)}$. Note that the weights on the edges $(110-t,9-t)$, $t=1,\ldots,8$, in $\widehat \Gamma^{(8)}$ are the same as the weight on the edge $(109,1)$ in $\widehat \Gamma$.

\quad

 \noindent{\bf Partition.} 
From $\beta=5$ and $w=1$ we determine
the parts of the partition class $\PII(B^8)$:
 $$z(j)=\begin{cases}
1 &\text{ for } j=1,\ldots,5 \\ 
2 &\text{ for } j=6,7,8. 
\end{cases}$$
The vector ${\bf z'}$ defined in Remark \ref{remark:TI-TII} is:
\begin{align*}
 {\bf z'} &=\npmatrix{  z(1) & z(4) & z(7) & z(2) & z(5) & z(8) & z(3) & z(6)}  \\
 &=\npmatrix{
1 & 1 & 2 & 1 & 1 & 2 & 1 & 2 \\
},
\end{align*}
and $\PII(B^8)=\T({\bf z'})$.
\end{example}

\begin{corollary}\label{cor:TI-TII}
Let $A \in \mc M_n^0(q,qd-z)$ have the associated partition $\npmatrix{z_1 & \ldots & z_d}\in ~\PII(A)$. Then $A=B^d$ for some $B \in \mc M_n^0(qd-z,qd)$ if and only if:
\begin{enumerate}
    \item There exists $w$ such that $z_j \in \{w,w+1\}$. (If all $z_j$ are equal then we say they are all equal to $w+1$). 
    
    We define $\beta:=|\{j\in \{1\ldots,d\}; z_j=w\}|$ and :
    $$z(j)=\begin{cases}
w &\text{ for } j=1,\ldots,\beta \\ 
w+1  &\text{ for } j=\beta+1,\ldots,d.
\end{cases}$$

\item The partition class $\PII(A)$ consists of the parts $z(j)$, $j=1,\ldots,d$. Further, the part $z(j)$ is followed by the part $z(\langle j-\beta \rangle_d)$ in $\PII(A)$.
\end{enumerate}
\end{corollary}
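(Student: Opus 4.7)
The corollary is essentially the inversion of Theorem \ref{thm:TI-TII}: the forward direction just reads off what that theorem has already established, while the reverse direction produces the realizing matrix $B$ by exploiting the fact that, for Type II arcs, the partition class together with the common weight on $q$-cycles pins down the sparsest realiser up to permutation similarity.

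For the forward direction, suppose $A=B^d$ with $B\in\mc M_n^0(qd-z,qd)$.  Theorem \ref{thm:TI-TII} says that $\PII(B^d)$ is the cyclic orbit of the vector $\mathbf z'$ built from the parts $z(j)$, where $\beta=d-\langle z\rangle_d$ and $w=(z-\langle z\rangle_d)/d$, and the successor of $z(j)$ is $z(\langle j-\beta\rangle_d)$.  So the parts are automatically in $\{w,w+1\}$, and the ordering condition holds.  The only remaining point is that the $w,\beta$ used in the corollary (defined from the partition) coincide with those in the theorem (defined from $z$): this is a counting check, since $\beta w+(d-\beta)(w+1)=z$ forces $d-\beta\equiv z\pmod d$, and $\gcd(qd,z)=1$ together with $d\ge 2$ rules out $d\mid z$, giving $d-\beta=\langle z\rangle_d\in\{1,\ldots,d-1\}$ and hence $w=(z-\langle z\rangle_d)/d$.

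For the reverse direction, suppose $A$ has partition class satisfying conditions 1 and 2.  The same counting argument fixes $\beta=d-\langle z\rangle_d$ and $w=(z-\langle z\rangle_d)/d$, so $\PII(A)$ is precisely the partition class described in Theorem \ref{thm:TI-TII}.  Take the canonical matrix $B_0\in\mc M_n^0(qd-z,qd)$ with digraph $C(\mathbf a(n))+\{(qd-z,1)\}$ whose shortcut edge carries weight $1-\alpha'$, where $\alpha'\in(0,1)$ is the common weight of the $q$-cycles of $A$ (see Theorem \ref{thm:TII_realisation}).  Theorem \ref{thm:TI-TII} then gives $B_0^d\in\mc M_n^0(q,qd-z)$, with the same partition class as $A$ and the same common $q$-cycle weight $\alpha'$.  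By Theorem \ref{thm:TII_realisation}, matrices in $\mc M_n^0(q,qd-z)$ are determined up to permutation similarity by their partition class together with $\alpha'$, so there is a permutation $P$ with $A=PB_0^dP^T$.  Setting $B:=PB_0P^T$ gives $B\in\mc M_n^0(qd-z,qd)$ and $B^d=A$, as required.

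The only step that requires real care is the bookkeeping in the forward direction, namely reconciling the parameters $(w,\beta)$ as they are defined in the corollary (reading them off the partition) with the same quantities as they appear in Theorem \ref{thm:TI-TII} (computed from $z$ and $d$).  Once that modular identification is made, both directions are short: the forward direction quotes the theorem, and the reverse direction uses uniqueness of sparsest Type II realisers (Theorem \ref{thm:TII_realisation}) to conjugate a canonical $B_0$ into the desired $B$.
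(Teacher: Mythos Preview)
Your proof is correct and follows essentially the same approach as the paper, which simply asserts that $A=B^d$ if and only if the parts $z_j$ correspond to the parts $z(j)$ in Theorem \ref{thm:TI-TII}, which in turn holds if and only if the two conditions of the corollary are satisfied. Your version is more thorough: you explicitly reconcile the parameters $(w,\beta)$ as defined from the partition with those defined from $z$ in Theorem \ref{thm:TI-TII}, and for the converse you actually construct $B$ by conjugating a canonical $B_0$ and invoking the uniqueness-up-to-permutation-similarity in Theorem \ref{thm:TII_realisation}, whereas the paper leaves both of these details implicit.
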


\begin{proof}
Let $A \in \mc M_n^0(q,qd-z)$ with associated partition $\npmatrix{z_1 & \ldots & z_d}\in \PII(A)$ and $B \in \mc M_n^0(qd-z,qd)$. Then $A=B^d$ if and only if the parts $z_j$ correspond to the parts $z(j)$ in Theorem \ref{thm:TI-TII} which is true if and only if both the conditions of the corollary are satisfied.
\end{proof}

\begin{example}
Let $n=120$, then $\mc K_n(15,107)=\mc K_n(107,120)^8$, where $q=15$, $s=107$, $d=8$ and $z=13$. Recall,  by Definition \ref{def:TIIpartition class}, that any $8$ non-negative integers that sum to $13$ result in a partition class for a matrix in $\mc M_n^0(15,107)$.
So, let $A \in \mc M_n^0(15,107)$ and $\bf z=\npmatrix{z_1 & \ldots & z_{8}} \in \PII(A)$.

If $A=B^8$ for some $B\in \mc M_n^0(15,120)$, then the elements in ${\bf z'}_A$ must be in the set $\{w,w+1\}$ for some $w \in \{1,2,\ldots,13\}$. If ${\bf z'}_A$ either contains three different numbers or numbers that are more than one apart, then $A$ will not be a power of any matrix $B \in \mc M_n^0(107,120)$. Thus, the necessary condition from the first part of the corollary leaves us just one choice for the vector ${\bf z}$ defined in Remark \ref{remark:TI-TII}:
$${\bf z}=\npmatrix{1 & 1 & 1 & 2 & 2 & 2 & 2 & 2}.$$
The above ${\bf z}$ gives $\beta=3$ and $w=1$. The entries of ${\bf z}$ can be permuted in several ways, but ${\bf z}$ gives us the unique partition class for $\PII(A)$ as defined in Corollary \ref{cor:TI-TII}. Therefore $A=B^8$ for some $B \in \mc M_n^0(107,120)$ if and only if ${\bf z'}\in \PII(A)$, where:
\begin{align*}
   {\bf z'}&=\npmatrix{z(1) & z(6) & z(3) & z(8) & z(5) & z(2) & z(7) & z(4)}\\
   &=\npmatrix{1 & 2 & 1 & 2 & 2 & 1 & 2 & 2}.
\end{align*}
\end{example}

\subsection{Type II arc is a power of a Type II arc}

Throughout this subsection we assume: $n=c\hat{d}q$, $ z \in \{1,\ldots,q-1\}$, $\gcd(cq,z)=1$. By Theorem \ref{thm:powers} we have: $$\mc K_{n}(cq,c\hat dq-z )^c=\mc K_{n}(q,c \hat dq-z).$$ 
Given the partition class $\PII(B)$  for  $B \in \mc M_{n}^0(cq,c\hat dq-z )$ we want to determine the partition class $\PII(B^c)$.

\begin{theorem}\label{thm:TII-TII}
Let $B \in \mc M_{n}^0(cq,c\hat dq-z )$ and $\npmatrix{\hat z_1 & \ldots & \hat z_{\hat d}} \in \PII(B)$. For $i=1\ldots, \hat d$ and $j=1\ldots, c$, we define $\beta_i=c-\langle\hat z_i\rangle_c$, $w_i=\frac{\hat z_i-\langle\hat z_i\rangle_c}{c}$, and $$z(i,j)=\begin{cases}
w_i &\text{ for } j=1,\ldots,\beta_i \\ 
w_{i}+1  &\text{ for } j=\beta_{i}+1,\ldots,c.
\end{cases}$$
The partition class $\PII(B^c)$ is defined as follows:
\begin{itemize}
\item the parts of the partition are equal to $z(i,j)$, $i=1,\ldots,\hat d$, $j=1\ldots,c$, 
\item in the partition $z(i,j)$ is followed by $z(\langle i+1 \rangle_{\hat d},\langle j-\beta_i \rangle_c)$. 
 \end{itemize}
\end{theorem}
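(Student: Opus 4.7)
I follow the template of Theorem~\ref{thm:TI-TII}. By Theorem~\ref{thm:TII_realisation}, $\Gamma(B)$ is isomorphic to $\hat\Gamma=\bigcup_{i=1}^{\hat d}(D_i+\{\hat e_i\})$, where $D_i$ is the $cq$-cycle on vertices $cq(i-1)+1,\ldots,cqi$ and $\hat e_i=(icq-\hat z_i,\langle 1+icq\rangle_n)$; the $\hat s$-cycle of $\hat\Gamma$ omits exactly the last $\hat z_i$ local positions of each $D_i$. The strategy is to compute $\hat\Gamma^{(c)}$ explicitly, identify its $q$-cycles $C_{i,j}$ and the new connecting edges induced by the $\hat e_i$'s, and then read off both the parts and the cyclic ordering of $\PII(B^c)$.

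Since $c$ divides $cq$, Remark~\ref{remark:powerofcycle}(1) splits each $D_i^{(c)}$ into $c$ disjoint $q$-cycles $C_{i,1},\ldots,C_{i,c}$, where $C_{i,j}$ consists of the local positions of $D_i$ congruent to $j$ modulo $c$. The key technical point is to verify that no walk of length $c$ in $\hat\Gamma$ traverses two of the edges $\hat e_i$: the distance along the $\hat s$-cycle between consecutive outgoing vertices equals $cq-\hat z_{i+1}$, and because $\hat z_{i+1}\leq z<q$ with $c,q\geq 2$, this quantity strictly exceeds $c$. Consequently each $\hat e_i$ contributes exactly $c$ edges to $\hat\Gamma^{(c)}$, running from the positions $cq-\hat z_i-c+1,\ldots,cq-\hat z_i$ of $D_i$ to the positions $1,\ldots,c$ of $D_{\langle i+1\rangle_{\hat d}}$ in that order. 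Because $\gcd(c,\hat s)=\gcd(c,z)=1$, these $c\hat d$ edges thread the $c\hat d$ $q$-cycles into a single cycle of length $\hat s=c\hat d q-z$; together with the preservation of a common weight on all $q$-cycles (each $C_{i,j}$ has weight equal to the weight of $D_i$), this shows $B^c\in\mc M_n^0(q,c\hat d q-z)$ with a digraph of the form \eqref{eq:TIIdiagraph}.

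For the parts: $z(i,j)$ is the number of missed positions of $D_i$ lying in $C_{i,j}$, i.e., the number of elements of $\{cq-\hat z_i+1,\ldots,cq\}$ congruent to $j$ modulo $c$; a direct count yields $z(i,j)=\lfloor(\hat z_i+j-1)/c\rfloor$, which upon substituting $\hat z_i=w_ic+(c-\beta_i)$ reduces to the piecewise formula in the statement and recovers $\sum_{j=1}^c z(i,j)=\hat z_i$. For the order: the outgoing connecting edge of $C_{i,j}$ has source at the unique element of $\{cq-\hat z_i-c+1,\ldots,cq-\hat z_i\}$ congruent to $j$ modulo $c$, namely $cq-\hat z_i-c+1+k$ with $k\equiv j+\hat z_i-1\pmod c$; its target is position $1+k$ of $D_{\langle i+1\rangle_{\hat d}}$, which lies in $C_{\langle i+1\rangle_{\hat d},\langle j+\hat z_i\rangle_c}=C_{\langle i+1\rangle_{\hat d},\langle j-\beta_i\rangle_c}$, exactly as claimed.

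The main obstacle is the verification that every walk of length $c$ in $\hat\Gamma$ meets at most one extra edge: this is where the hypotheses $\gcd(cq,z)=1$ and $z\in\{1,\ldots,q-1\}$ are genuinely used, and it is what ensures $B^c$ is again sparsest. The remainder is careful modular bookkeeping, especially in reconciling the two cases $c\mid\hat z_i$ and $c\nmid\hat z_i$ with the parameters $\beta_i$ and $w_i$ under the convention $\langle\cdot\rangle_c\in\{1,\ldots,c\}$.
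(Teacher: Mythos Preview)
Your proof is correct and follows essentially the same approach as the paper: decompose each $\hat q$-cycle of $\widehat\Gamma$ into $c$ $q$-cycles via Remark~\ref{remark:powerofcycle}, track the $c$ edges contributed by each $\hat e_i$, and read off the parts and their cyclic order by modular bookkeeping. Your explicit verification that no walk of length $c$ traverses two connecting edges, and your use of $\gcd(c,z)=1$ to confirm the $c\hat d$ $q$-cycles link into a single $\hat s$-cycle, are points the paper leaves implicit; your direct count $z(i,j)=\lfloor(\hat z_i+j-1)/c\rfloor$ is a slightly cleaner route to the same piecewise formula than the paper's computation via $k(i,j)$ and $v_O,v_I$.
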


\begin{proof} 
Let $B \in \mc M_n^0(\hat q, \hat s)$, where $\hat q=cq$, $\hat s=c\hat d q-z $. 

\quad

\noindent{\bf Digraph of $\bf B^c$.} Given a directed graph $\widehat \Gamma$ that is isomporhic to $\Gamma(B)$, we first find a directed graph $\Gamma=\widehat\Gamma^{(c)}$ that is isomorphic to $\Gamma(B^c)$. By Theorem \ref{thm:TII_realisation}, $\Gamma(B)$ is isomorphic to
        $$\widehat \Gamma=\cup_{i=1}^{\hat d} \left ( C(\hat q(i-1)\cdot{\bf e}+{\bf a}(\hat q))+\{(i \hat q-\hat z_i,\langle 1+i\hat q\rangle_n)\}\right),$$
        where $\npmatrix{\hat z_1&\ldots & \hat z_{\hat d}} \in \PII(B)$.
  Denoting the $\hat q$-cycles in $\widehat \Gamma$ by $\widehat C_i:=C(\hat q(i-1)\cdot e+{\bf a}(\hat q))$, $i=1,\ldots,\hat d$,  and taking $k=\hat q$, $k_1=q$ in the first part of Remark \ref{remark:powerofcycle}, we get
$\widehat C_i^{(c)}=\cup_{j=1}^c C_{i,j},$ where for $i=1,\ldots,\hat d$ and $j=1,\ldots,c$, we denote: 
\begin{align}\label{eq:cyclesTII}
  C_{i,j}:=C \left( (qc(i-1)+j)\cdot {\bf e}+c\cdot {\bf a}_0(q-1)\right ).
\end{align}
Note that each $C_{i,j}$ is a $q$-cycle in $\widehat \Gamma^{(c)}$. 

Next, we consider contribution of the edges $\hat e_i=(i \hat q-\hat z_i,\langle 1+i\hat q\rangle_n), i=1,\ldots,\hat d$, to $\widehat\Gamma^{(c)}$. Note that the edge $\hat e_i$ connects $\widehat C_i$ to $\widehat C_{\langle i+1 \rangle_{\hat d}}$ in $\widehat \Gamma$, and contributes the edges $\{e_{i,t}:t=1,\ldots,c\}$ to  $\widehat \Gamma^{(c)}$, where:
\begin{align}\label{eq:connecting_edgesTII}
    e_{i,t}:=(\langle i\hat q-\hat z_i-(t-1)\rangle_n,\langle i \hat q+c-(t-1)\rangle_n).
\end{align}
Let us denote $e_{i,t}:=(v_O(i,t),v_I(i,t))$ for future use. Note that  in $\widehat \Gamma^{(c)}$, the edges $e_{i,t}$  connect  the $q$-cycles to form an $s$-cycle. This $s$-cycle is the union of the edges $e_{i,t}$ and paths that are part of $C_{i,j}$'s. These paths are discussed in the third section of the proof where we talk about the partition parts.
At this point we can write down $\widehat \Gamma^{(c)}$ as:
$$\widehat \Gamma^{(c)}=\cup_{i=1}^{\hat d}\left(\cup_{j=1}^c C_{ij}+\{e_{i,t}:t=1,\ldots,c\}\right).$$
The weight $\alpha$ on the edges $\hat e_i,i=1,\ldots,\hat d$, in $\widehat \Gamma$ implies the weight $\alpha$ on the edges $e_{i,t},i=1,\ldots,\hat d,t=1,\ldots,c$ in $\widehat \Gamma^{(c)}$. Thus, the weights on all the $q$-cycles, $C_{i,j},i=1,\ldots,\hat d, j=1,\ldots,c$, are equal to $1-\alpha$ in $\widehat \Gamma^{(c)}$.

\quad

\noindent{\bf Connecting the $q$-cycles and ordering the parts in the partition.} To determine $\PII(B^c)$ we take a closer look at how the edges $e_{i,t}$ connect the cycles $C_{i,j}$. In particular, let $z(i,j)$ be the number of vertices in $V(C_{i,j})$ that are not on the $s$-cycle in $\widehat \Gamma^{(c)}$. To determine the ordering of $z(i,j)$ in $\PII(B^c)$ we fix $i$ and $j$ and determine how the $q$-cycles follow each other to form the $s$-cycle in $\widehat \Gamma^{(c)}$.

For each pair $i,j$, $i\in \{1,\ldots,\hat d\}$ and $j\in\{1,\ldots,c\}$, there exists precisely one $t'\in \{1,\ldots,c\}$ so that $v_I(i-1,t') \in V(C_{i,j})$. We say that $e_{i-1,t'}$ is \emph{an incoming edge} for $C_{i,j}$. From $v_I(i-1,t')=\langle (i-1) \hat q+c-t'+1)\rangle_n$ and  $V(C_{i,j})=\{(qc(i-1)+\ell c+j),\ell=0,\ldots,q-1\}$, we deduce that $j+t'$ is congruent to $1$ modulo $c$. In short, $e_{i-1,\langle 1-j \rangle_c}$ is the incoming edge for $C_{i,j}$. Similarly, there exists a unique $t\in\{1,\ldots,c\}$ such that $v_O(i,t)\in V(C_{i,j})$ and we say that $e_{i,t}$ is an \emph{outgoing edge} for $C_{i,j}$. This implies that $j+t$ is congruent to $1-\hat z_i$ modulo $c$. This relation and the fact that $t\in \{1,\ldots,c\}$ uniquely define $t$ to be:
\begin{align}\label{eq:t-values}
  t:=\begin{cases}
\beta_i+1-j &\text{ for } j=1,\ldots,\beta_i \\ 
\beta_{i}+1+c-j  &\text{ for } j=\beta_{i}+1,\ldots,c, 
\end{cases}
\end{align}
where $\beta_i:=c-\langle \hat z_i \rangle_c$. Finally, to determine how the cycles $C_{i,j}$ are ordered to form the $s$-cycle in $\widehat \Gamma^{(c)}$, we use equation \eqref{eq:connecting_edgesTII} and equation \eqref{eq:t-values} to get:
$$v_I(i,t)=\begin{cases}
i\hat q+c+j-\beta_i &\text{ for } j=1,\ldots,\beta_i \\ 
i\hat q+j-\beta_i  &\text{ for } j=\beta_{i}+1,\ldots,c.
\end{cases}$$
This implies $v_I(i,t)\in C_{\langle i+1 \rangle_{\hat d}, \langle j-\beta_i \rangle_c}$. Consequently, $z(i,j)$ is followed by $z(\langle i+ 1 \rangle_{\hat d},\langle j-\beta_i \rangle_c)$ in $\PII(B^c)$.

\quad

\noindent{\bf Parts of the partition.} To determine the parts that appear in $\PII(B^c)$, we want to determine how many vertices from each $C_{i,j}$ cycle are (are not) contained on the $s$-cycle in $\widehat\Gamma^{(c)}$. Equivalently, we want to know the number of vertices on the path that is the intersection between $C_{i,j}$  and the $s$-cycle in $\widehat\Gamma^{(c)}$. Let us denote this number by $k(i,j)$. From the above discussion, we know that $v_I(i-1,\langle 1-j \rangle_c)$ is the first and $v_O(i,t)$ is the last vertex on this path for $t=\langle 1-j-\hat z_i \rangle_c$. Using equation \eqref{eq:connecting_edgesTII} and equation \eqref{eq:t-values}, we get:
\begin{align*}
v_I(i-1,\langle 1-j \rangle_c&=\hat q(i-1)+j,\\
v_O(i,t)&=
\begin{cases}
i\hat q-\hat z_i-\beta_i+j, &\text{ for }j=1,\ldots,\beta_i\\
i\hat q-\hat z_i-c-\beta_i+j &\text{ for }j=\beta_i+1,\ldots,c.
\end{cases}
\end{align*}
From here, we can write:
\begin{align*}
 v_O(i,t)&=
\begin{cases}
(i-1)\hat q+(q-w_i-1)c+j, &\text{ for }j=1,\ldots,\beta_i\\
(i-1)\hat q+(q-w_i-2)c+j &\text{ for }j=\beta_i+1,\ldots,c,
\end{cases}
\end{align*}
where $\beta_i=c-\langle\hat z_i\rangle_c$, and $w_i=\frac{\hat z_i-\langle\hat z_i\rangle_c}{c}$.
Recalling that the vertices of $C_{i,j}$ are consecutively numbered by $(qc(i-1)+\ell c+j)$, $\ell=0,\ldots,q-1$, we conclude that:
$$k(i,j)=\begin{cases}
q-w_i, &\text{ for }j=1,\ldots,\beta_i\\
q-w_i-1, &\text{ for }j=\beta_i+1,\ldots,c,
\end{cases}$$
or equivalently, $z(i,j):=q-k(i,j)$ is the number of vertices from $C_{i,j}$ that are not on the $s$-cycle. With this, we have determined the numbers that appear in the partition class $\PII(B^c)$. 

\end{proof}

\begin{remark}\label{remark:TII-TII}
Given $\hat{\mathbf{z}} \in \mc{P}(B)$ we can define a matrix $Z'$ that satisfies $\vecm(Z')^T \in ~\PII(B^c)$ as follows. 
From $\hat{\mathbf{z}}$ we define the parts $z(i,j)$ as in the statement of Theorem \ref{thm:TII-TII}, and consider the matrix:
$$Z:=\left(
\begin{array}{cccc}
 z(1,1) & z(1,2) & \ldots & z(1,c) \\
 \vdots & \vdots & \ddots & \vdots \\
 z(\hat d,1) & z(\hat d,2) & \ldots & z(\hat d, c)\\
\end{array}
\right).$$
The elements of $Z$ are the same as the parts of the partitions in  $\PII(B^c)$ (with multiplicities). Next, we determine their ordering in the partition. 
Using the fact that $z(i,j)$ is followed by $z(\langle i+1 \rangle_{\hat d}, \langle j-\beta_i \rangle_c)$, we define the $j$-th column, $z'_{\star,j}$, $j=1,\ldots,c$, of the matrix $Z'$ to be: 

$$z'_{\star,j}:=\left(
\begin{array}{c}
  z(1,\langle 1-(j-1)\beta \rangle_c) \\
 
  \vdots \\
 
 z(i, \langle 1-(j-1)\beta-\sum_{k=1}^{i-1}\beta_k \rangle_c)  \\
 
  \vdots  \\

  z(\hat d, \langle 1-(j-1)\beta-\sum_{k=1}^{\hat d-1}\beta_k \rangle_c)  \\
 
\end{array}
\right).$$
The partition class $\PII(B^c)$ is equal to $\T(\bf z)$, where ${\bf z}=\vecm(Z')^T$. Note that the unordered multiset of elements in the $j$-th row of $Z$ is equal to the unordered multiset of elements in the $j$-th row of $Z'$ but the elements appear in matrices $Z$ and $Z'$ in different orders. 
\end{remark}

\begin{example}\label{Ex:T2-T2}
Let $n=120$, then $\mc K_n(6,115)=\mc K_n(24,115)^4$, where $q=6$, $c=4$, $qc=\hat q=24$, $\hat s=115$, $\hat d=5$, $z=5$ and $z=\hat z_1+\hat z_2+\hat z_3+\hat z_4+\hat z_5$.

Let $B\in \mc M_n^0(24,115)$. In this example, we consider a few possibilities for partitions ${\bf \hat z}  \in \PII(B)$. In each case, we illustrate parts of the proof and write down matrices $Z$ and $Z'$ defined in Remark \ref{remark:TII-TII}.
\begin{enumerate}
    \item $\npmatrix{\hat z_1 & \hat z_2 & \hat z_3 & \hat z_4 & \hat z_{5}}=\npmatrix{5 & 0 & 0 & 0 & 0}.$

\quad

\noindent{\bf Digraph of $\bf B^4$.} $\widehat \Gamma$ consists of $5$ cycles of order $24$, $C(24(i-1)\cdot {\bf e}+ {\bf a}(24))$, $i=1,\ldots,5$, and the edge set, $\{(19,25),(48,49),(72,73),(96,97),(120,1)\}$ that connect the $24$-cycles to make the $115$-cycle. 
    
$\widehat \Gamma^{(4)}$ consists of $20$ cycles of order $6$, $C \left( (24(i-1)+j)\cdot {\bf e}+c\cdot {\bf a}_0(6)\right )$ for $i=1,\ldots,5$, $j=1,\ldots,4$, and the edge set:

$$(\langle 24 i-\hat z_i-(t-1)\rangle_n,\langle 24i +4-(t-1)\rangle_n)$$ for $i=1,\ldots,5$, $t=1,\ldots,4$, that connect the $6$-cycles to form a cycle of order $115$. Note that each of these connecting edges has the weight $\alpha$ in $\widehat \Gamma^{(4)}$ if the connecting edges in $\widehat \Gamma$ have the weight $\alpha$.
   
\quad

\noindent{\bf Parts of the Partition.}  
From $\hat z_1=5$ we get $\beta_1=3$ and $w_1=1$. Similarly, $ \hat z_2=\hat z_3=\hat z_4=\hat z_5=0$ give $\beta_2=\beta_3=\beta_4=\beta_5=0$ and $w_2=w_3=w_4=w_5=-1$. 
    
Thus, the matrix $Z$ defined in Remark \ref{remark:TII-TII} is equal to: $$Z=\left(
\begin{array}{cccc}
 1 & 1 & 1 & 2 \\
 0 & 0 & 0 & 0 \\
 0 & 0 & 0 & 0\\
 0 & 0 & 0 & 0\\
 0 & 0 & 0 & 0
\end{array}
\right).$$ 
At this point, we know that the elements in $\PII(B^c)$ will have three parts equal to $1$, one part equal to $2$, and all other parts equal to $0$. Since all the nonzero parts appear in the first row, we also know that every nonzero part will be followed by precisely four zeros. As this observation already uniquely defines the partition class, we move on to the next case.

\item $\npmatrix{\hat z_1 & \hat z_2 & \hat z_3 & \hat z_4 & \hat z_{5}}=\npmatrix{3 & 0 & 2 & 0 & 0}.$

{\bf Parts of the Partition.}
From $\hat z_1=3$ we get $\beta_1=1$, $w_1=0$. Next, $\hat z_3=2$ gives $\beta_3=2$, $w_3=0$. Finally, $\hat z_2=\hat z_4=\hat z_5=0$ gives  $\beta_2=\beta_4=\beta_5=0$ and $w_2=w_4=w_5=-1$.  
This implies: $$Z=\left(
\begin{array}{cccc}
 0 & 1 & 1 & 1 \\
 0 & 0 & 0 & 0 \\
 0 & 0 & 1 & 1\\
 0 & 0 & 0 & 0\\
 0 & 0 & 0 & 0
\end{array}
\right).$$
The elements of $\PII(B^c)$ have $5$ parts equal to $1$ and all other parts equal to $0$. 

{\bf Ordering.} This time the matrix $Z$ does not determine $\PII(B^c)$, and we also need:
$$Z'=\left(
\begin{array}{cccc}
 z(1,1) & z(1,2) & z(1,3) & z(1,4) \\
 z(2,4) & z(2,1) & z(2,2) & z(2,3) \\
 z(3,4) & z(3,1) & z(3,2) & z(3,3)\\
 z(4,2) & z(4,3) & z(4,4) & z(4,1)\\
 z(5,2) & z(5,3) & z(5,4) & z(5,1)
\end{array}
\right)=\left(
\begin{array}{cccc}
 0 & 1 & 1 & 1 \\
 0 & 0 & 0 & 0 \\
 1 & 0 & 0 & 1\\
 0 & 0 & 0 & 0\\
 0 & 0 & 0 & 0
\end{array}
\right).$$
Thus, the partition class $\PII(B^4)$ is given by $\T(\vecm(Z')^T)$.

\item Taking $\npmatrix{\hat z_1 & \hat z_2 & \hat z_3 & \hat z_4 & \hat z_{5}}=\npmatrix{2 & 0 & 2 & 0 & 1}$ we get:
$$Z=\left(
\begin{array}{cccc}
 0 & 0 & 1 & 1 \\
 0 & 0 & 0 & 0 \\
 0 & 0 & 1 & 1\\
 0 & 0 & 0 & 0\\
 0 & 0 & 0 & 1
\end{array}
\right)\text{ and } Z'=\left(
\begin{array}{cccc}
 0 & 0 & 1 & 1 \\
 0 & 0 & 0 & 0 \\
 1 & 1 & 0 & 0\\
 0 & 0 & 0 & 0\\
 0 & 0 & 0 & 1
\end{array}
\right),$$
and the partition class $\PII(B^c)$ is given by ${\bf z}=\T(\vecm(Z')^T)$.
\end{enumerate}
\end{example}

\begin{corollary}\label{cor:TII-TII}
Let $A \in \mc M_{n}^0(q,c\hat dq-z)$ and $\npmatrix{z_1 & \ldots z_{c \hat d}} \in \PII(A)$. Then $A =B^c$ for some  $B \in \mc M_{n}^0(cq,c\hat dq-z )$ if and only if:
\begin{enumerate}
    \item For every $i\in \{1,\ldots,\hat d\}$ there exists $w_i$ so that $z_{t \hat d+i}\in \{w_i,w_{i}+1\}$ for $t=0,\ldots,c-1$. (If for some $i$ all $z_{t \hat d+i}$ are equal, then we say that they are equal to $w_{i}+1$.) 
    
    For $i=1\ldots, \hat d$, we define, $\beta_i:=|\{t\in \{0\ldots,c-1\}; z_{t \hat d+i}=w_i\}|$, and
    $$z(i,j):=\begin{cases}
w_i &\text{ for } j=1,\ldots,\beta_i \\ 
w_{i}+1  &\text{ for } j=\beta_{i}+1,\ldots,c,
\end{cases}$$
where $i=1,\ldots,\hat d,j=1,\ldots,c.$

\item The partition class $\PII(A)$ consists of the parts $z(i,j)$, $i=1,\ldots,\hat d$, $j=1\ldots,c$. Further, in the partition $z(i,j)$ is followed by $z(\langle i+1 \rangle_{\hat d},\langle j-\beta_i \rangle_c)$. 
     
\end{enumerate}
If the conditions above are satisfied, then $A=B^c$ for $B \in \mc M_{n}^0(cq,c\hat dq-z )$ with $\PII(B)=\T(\npmatrix{\hat z_1 &\ldots &\hat z_{\hat d}}),$  where $\hat z_i:=c(w_i+1)-\beta_i$, for $i=1,\ldots,\hat d$.
\end{corollary}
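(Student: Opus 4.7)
The plan is to invert Theorem~\ref{thm:TII-TII}: that theorem describes $\PII(B^c)$ in terms of $\PII(B)$, and the corollary is its converse, so both directions essentially reduce to matching parameters between the two statements. I will handle the forward implication first.

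For $(\Rightarrow)$, I will start with $A=B^c$, take $\npmatrix{\hat z_1 & \ldots & \hat z_{\hat d}} \in \PII(B)$, and apply Theorem~\ref{thm:TII-TII} to obtain $\PII(A)$ as a cyclic class with parts $z(i,j)$ in the prescribed order. Choosing the representative that begins at $z(1,1)$ and stepping forward by $\hat d$ positions at a time, the entries at positions $t\hat d + i$ (for $t=0,\ldots,c-1$) are $z(i, \langle 1 - t\beta - \sum_{k<i}\beta_k\rangle_c)$, where $\beta := \sum_k \beta_k$. I will show that the second index ranges over all of $\{1,\ldots,c\}$ as $t$ varies: this reduces to $\gcd(\beta, c) = 1$, which holds because $\beta \equiv -z \pmod c$ (from $\beta_i = c - \langle \hat z_i\rangle_c$ and $\sum_i \hat z_i = z$) and the hypothesis $\gcd(cq,z)=1$ forces $\gcd(c,z)=1$. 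This yields $\{z_{t\hat d+i}:t=0,\ldots,c-1\} = \{z(i,1),\ldots,z(i,c)\} \subseteq \{w_i, w_i+1\}$ with exactly $\beta_i$ copies of $w_i$, establishing condition~1; condition~2 is read off directly from the ordering statement in Theorem~\ref{thm:TII-TII}.

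For $(\Leftarrow)$, I will use the formula $\hat z_i := c(w_i+1) - \beta_i$ to build the partition class of the candidate $B$. The identity $\hat z_i = \beta_i w_i + (c-\beta_i)(w_i+1) = \sum_{t=0}^{c-1} z_{t\hat d+i}$ immediately gives $\hat z_i \geq 0$ and $\sum_i \hat z_i = \sum_j z_j = z$, so $\npmatrix{\hat z_1 & \ldots & \hat z_{\hat d}}$ is a legitimate partition of $z$ into $\hat d$ non-negative parts. Next, a short modular check confirms that feeding these $\hat z_i$ into the recipes of Theorem~\ref{thm:TII-TII} recovers exactly the $w_i$ and $\beta_i$ prescribed by the corollary: since $\hat z_i \equiv -\beta_i \pmod c$, one has $\langle \hat z_i\rangle_c = c - \beta_i$ when $\beta_i>0$ (and $\langle \hat z_i\rangle_c = c$ when $\beta_i=0$), and the formula for $w_i$ follows. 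I will then take $B \in \mc M_n^0(cq, c\hat dq-z)$ with partition class $\T(\npmatrix{\hat z_1 & \ldots & \hat z_{\hat d}})$ and with the same connecting-edge weight as $A$. Theorem~\ref{thm:TII-TII} now gives $\PII(B^c) = \PII(A)$, and since sparsest Type~II realisations are determined up to permutation similarity by their partition class together with the connecting-edge weight (Theorem~\ref{thm:TII_realisation}), conjugating $B$ by a suitable permutation produces $A = B^c$ exactly.

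The hard part will be the bookkeeping around cyclic representatives and modular indices: one must pick the correct representative of $\PII(A)$ (the one aligned so that position $1$ corresponds to $(i,j)=(1,1)$), and verify that the two parameterisations — the corollary's hypotheses and the output of the theorem's formulas applied to the constructed $\hat z_i$ — produce the same quantities $w_i, \beta_i$. A secondary subtlety is pinpointing where the coprimality hypothesis $\gcd(cq, z)=1$ is genuinely used: it enters in the forward direction via $\gcd(\beta, c)=1$, ensuring that as $t$ runs through $\{0,\ldots,c-1\}$ the stride $\hat d$ across the partition visits every second index $j \in \{1,\ldots,c\}$ exactly once.
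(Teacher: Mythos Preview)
Your proposal is correct and follows the same approach as the paper: both derive the corollary directly from Theorem~\ref{thm:TII-TII} by matching parameters. The paper's own proof is a one-line appeal to that theorem, whereas you carefully fill in the bookkeeping the paper leaves implicit---in particular, you make explicit where the standing hypothesis $\gcd(cq,z)=1$ enters (via $\gcd(\beta,c)=1$, which guarantees that the stride of length $\hat d$ through the cyclic partition visits every second index $j\in\{1,\ldots,c\}$ exactly once).
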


\begin{proof}
Let $A \in \mc M_{n}^0(q,c\hat dq-z)$ and $\npmatrix{z_1 & \ldots & z_d} \in \PII(A)$. By Theorem \ref{thm:TII-TII}, $A=B^c$ for some $B \in \mc M_{n}^0(cq,c\hat dq-z )$ if and only if the items 1. and 2. in Corollary \ref{cor:TII-TII} hold. 
\end{proof}

\begin{example}
Let $n=120$, $q=6$ and $s=115$. Then, $d=20$, $z=5$ and $n=qd$. By Definition \ref{def:TIIpartition class}, any partition class containing $20$ non-negative integers that sum to $5$ is a partition class for some matrix in $\mc M_n^0(6,115)$.

Let $A \in \mc M_n^0(6,115)$ and ${\bf z}=\npmatrix{z_1 & \ldots & z_{20}}\in \PII (A)$. Let $Z'_A$ be a $\hat d \times c$ matrix satisfying $\vecm(Z'_A)^T={\bf z}$:
$$Z'_A=\left(
\begin{array}{cccc}
 z_1 & z_6 & z_{11} & z_{16} \\
 z_2 & z_7 & z_{12} & z_{17} \\
 z_3 & z_8 & z_{13} & z_{18}\\
 z_4 & z_9 & z_{14} & z_{19}\\
 z_5 & z_{10} & z_{15} & z_{20}
\end{array}
\right).$$
We want to determine when $A=B^c$ is a power of some matrix $B \in \mc M_n^0(24,115)$.

From the first part of the corollary, we know that each row in $Z'_A$ must have elements from the set $\{w_i,w_i+1\}$. That is, if any row of $Z'_A$ either contains three different numbers or numbers that are more than one apart, we know that $A$ is not a power of any matrix $B \in \mc M_n^0(24,115)$. The necessary condition to have entries at most one apart in each row of $Z'_A$, and the fact that all elements of $Z'_A$ sum up to $5$, imply that the maximal possible entry in any one row of $Z'_A$ is $2$, and if $2$ is an element of $Z'_A$, then the row that contains it is the only nonzero row in the matrix. 
We consider two cases:
\begin{enumerate}
\item {\bf{$2$ is an element of $Z'_A$.}} If $2$ is an entry in a row then the other entries in that row have to be $1$. Without loss of generality, we can put the entries $2$ and $1$ in the first row of $Z'_A$. This gives us
$$Z=\left(
\begin{array}{cccc}
 1 & 1 & 1 & 2 \\
 0 & 0 & 0 & 0 \\
 0 & 0 & 0 & 0\\
 0 & 0 & 0 & 0\\
 0 & 0 & 0 & 0
\end{array}
\right),$$
and a unique partition class $\PII(A)$. Referring back to Example \ref{Ex:T2-T2}, we see that  $\PII(A)$ is equivalent to $\PII(B^4)$ for $B \in \mc M_n^0(24,115)$ with associated partition class $\T({\bf \hat z})$, where $\hat {\bf z}=\npmatrix{5 & 0 & 0 & 0 & 0}$. In order words, $A=B^4$ for some matrix
$B \in \mc M_n^0(24,115)$ with $\npmatrix{5 & 0 & 0 & 0 & 0} \in \PII(B)$. 

\item {\bf{All elements of $Z'_A$ are either $0$ or $1$.}} Under this constraint
the first item in Corollary \ref{cor:TII-TII} automatically holds, and we have several options for the matrix $Z$ as defined in Remark \ref{remark:TII-TII}. In other words, we can choose the row sums of $Z$ arbitrarily, as long as the sum of all the entries in $Z$ is equal to $5$. Let us look at a few specific examples:
\begin{itemize}
\item Letting $$Z=\left(
\begin{array}{cccc}
 z(1,1) & z(1,2) & z(1,3) & z(1,4) \\
 z(2,1) & z(2,2) & z(2,3) & z(2,4) \\
 z(3,1) & z(3,2) & z(3,3) & z(3,4)\\
 z(4,1) & z(4,2) & z(4,3) & z(4,4)\\
 z(5,1) & z(5,2) & z(5,3) & z(5,4)
\end{array}
\right)=\left(
\begin{array}{cccc}
1  & 1 & 1 & 1 \\
0  & 0 & 0 & 1 \\
0 & 0 & 0 & 0\\
0  & 0 & 0 & 0\\
0  & 0 & 0 & 0
\end{array}
\right),$$ 
we get $w_1=w_2=0$, $w_3=w_4=w_5=-1$ and $\beta_1=0$, $\beta_2=3$, $\beta_3=\beta_4=\beta_5=0$.

In this case $Z'=Z$, and any matrix $A \in \mc M_n^0(6,115)$ with $\PII(A)=\T(\vecm({Z'})^T)$ is a fourth power of some a matrix $B \in \mc M_n^0(24,115)$. 

\item For $$Z=\left(
\begin{array}{cccc}
 0 & 0 & 1 & 1 \\
 0 & 0 & 1 & 1 \\
 0 & 0 & 0 & 0\\
 0 & 0 & 0 & 1\\
 0 & 0 & 0 & 0
\end{array}
\right),$$
we get $w_1=w_2=0$, $\beta_1=\beta_2=2$, $w_3=w_5=-1$, $\beta_3=\beta_5=0$, $w_4=0$ and $\beta_4=3$. We can cyclically permute the rows of $Z$ in many ways, but this $Z$ will give us only one partition class as in Corollary \ref{cor:TII-TII}.

Specifically, $A=B^4$ for some $B \in \mc M_n^0(24,115)$ if and only if $\vecm(Z')^T \in ~ \PII(A)$, where:
$$Z'=\left(
\begin{array}{cccc}
 z(1,1) & z(1,2) & z(1,3) & z(1,4) \\
 z(2,3) & z(2,4) & z(2,1) & z(2,2) \\
 z(3,1) & z(3,2) & z(3,3) & z(3,4)\\
 z(4,1) & z(4,2) & z(4,3) & z(4,4)\\
 z(5,2) & z(5,3) & z(5,4) & z(5,1)
\end{array}
\right)=\left(
\begin{array}{cccc}
 0 & 0 & 1 & 1 \\
 1 & 1 & 0 & 0 \\
 0 & 0 & 0 & 0\\
 0 & 0 & 0 & 1\\
 0 & 0 & 0 & 0
\end{array}
\right).$$
In that case $\PII(B)=\T(\hat {\bf z})$, where
$\hat {\bf z}=\npmatrix{2 & 2 & 0 & 1 & 0}$.
\end{itemize}

\end{enumerate}
\end{example}

\subsection{Type III arc is a power of  a Type III arc}
Throughout this subsection we assume $n=qc\hat d+y$, $d=c\hat d$, $\gcd(qc,y)=1$. By Theorem \ref{thm:powers}: $$\mc K_n(q,qc\hat d+y)=\mc K_n(qc,qc\hat d+y)^c.$$
The theorem below determines the partition class of $B^c$ for $B \in \mc M_n^0(qc,qc\hat d+y)$.

\begin{theorem}\label{thm:TIII-TIII}
Let $B \in \mc M_{n}^0(qc,qc\hat d+y)$ and $\npmatrix{\hat y_1 & \ldots & \hat y_{\hat d}} \in \PIII(B)$. For $i=1\ldots, \hat d$, let $u_i$, $\beta_i$, $\eta_i$ and $\gamma_i$ be defined by:

$$\hat y_i= u_ic+\beta_i \text{ and }
\sum_{k=1}^i\beta_k=\eta_i c+\gamma_i
$$
where $\beta_i, \gamma_i \in \{0,\ldots,c-1\}$. For $i=1,\ldots,\hat d$ and $j=1,\ldots,c$, we define:
\begin{align}\label{eq:T3parts}
y(i,j)=\begin{cases}
u_i+1, &j=\langle\gamma_{i-1}+1\rangle_c,\ldots, \langle\gamma_{i-1}+\beta_i\rangle_c \\
u_i, &j=\langle\gamma_{i-1}+\beta_i+1\rangle_c, \ldots, \langle\gamma_{i-1}+c\rangle_c .
\end{cases}
\end{align}

The partition class $\PIII(B^c)$ is defined as follows:
\begin{itemize}
\item the parts of the partition are equal to $y(i,j)$, $i=1,\ldots,\hat d$, $j=1\ldots,c$, 
\item in the partition $y(i,j)$ is followed by $y(i+1,j)$ for $i=1,\ldots, \hat d-1$. Further, $y(\hat d,j)$ is followed by $y(1,\langle j-y \rangle_c)$.
 \end{itemize}
\end{theorem}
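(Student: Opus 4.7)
The strategy mirrors the proofs of Theorems \ref{thm:TI-TII} and \ref{thm:TII-TII}: identify $\Gamma(B)$ with an explicit digraph $\widehat{\Gamma}$ via Theorem \ref{thm:TIII_realisation}, compute $\widehat{\Gamma}^{(c)}$ by separating walks that use zero versus exactly one back edge, and then extract the partition class from the cyclic arrangement of the resulting $q$-cycles along the new $n$-cycle.

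Concretely, I would start from $\widehat{\Gamma} = C({\bf a}(n)) + \{\hat e_i : i = 1, \ldots, \hat d\}$, where $\hat e_i = (i \hat q + \sigma_i,\, 1 + (i-1)\hat q + \sigma_i)$, $\hat q = qc$, $\sigma_i = \sum_{k=1}^{i} \hat y_k$, and set $a_i = 1 + (i-1)\hat q + \sigma_i$. A short counting argument then shows that any walk of length $c$ in $\widehat{\Gamma}$ uses at most one back edge, because between any two back-edge usages at least $\hat q - 1 = qc - 1$ forward steps are needed to reach the next back-edge tail, which already exceeds $c-1$. Consequently $\widehat{\Gamma}^{(c)}$ splits into: (a) the $c$-th power of the main $n$-cycle, which by Remark~\ref{remark:powerofcycle}(2) is the single $n$-cycle $C(\langle c \cdot {\bf a}(n) \rangle_n)$ since $\gcd(qc, y)=1$ forces $\gcd(c, n) = \gcd(c, y) = 1$; and (b) for each $i \in \{1,\ldots,\hat d\}$ and each $t \in \{1, \ldots, c\}$, a back edge $e_{i,t} = (i\hat q + \sigma_i - (t-1),\, 1 + (i-1)\hat q + \sigma_i + c - t)$ coming from the walk that takes $t-1$ forward steps to the tail of $\hat e_i$, uses $\hat e_i$, then takes $c-t$ more forward steps.

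The tail-to-head displacement of $e_{i,t}$ is $\hat q - c = c(q-1)$, so in the new $n$-cycle (where consecutive positions differ by $c$ in the original labels) each $e_{i,t}$ jumps backward by exactly $q-1$ positions, closing with the preceding $q-1$ forward edges into a $q$-cycle. These $c\hat d = d$ $q$-cycles are precisely the sets $V_{i,j} = \{a_i + j - 1 + kc : k = 0, \ldots, q-1\}$ for $i = 1, \ldots, \hat d$ and $j = 1, \ldots, c$; they are pairwise disjoint because each $V_{i,j} \subseteq V(\widehat{C}_i)$ and Lemma~\ref{lemma:powerofcycle} partitions $V(\widehat{C}_i)$ into the $c$ sets $V_{i,1}, \ldots, V_{i,c}$. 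In the new cycle, $V_{i, j}$ occupies the $q$ consecutive positions starting at $\pi(i, j) := c^{-1}(a_i + j - 1) \pmod n$, where $c^{-1}$ denotes the inverse of $c$ modulo $n$.

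To extract $\PIII(B^c)$ I then read off the cyclic sequence of gaps between consecutive $q$-cycles in the new $n$-cycle. The governing identities are $\pi(i, j+1) - \pi(i, j) \equiv c^{-1} \pmod n$, $\pi(i+1, j) - \pi(i, j) \equiv q + c^{-1}\hat y_{i+1} \pmod n$, and $c^{-1} y \equiv -q \hat d \pmod n$ (the last from $n = qc\hat d + y$). Writing $\hat y_i = u_i c + \beta_i$ and $\sum_{k \leq i}\beta_k = \eta_i c + \gamma_i$, group $i$ contributes exactly $\beta_i$ gaps of size $u_i+1$ and $c - \beta_i$ gaps of size $u_i$, while the cumulative phase shift $\gamma_{i-1}$ controls which $j$ indices within group $i$ receive the larger gap. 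The principal obstacle is this last bookkeeping: a careful modular-arithmetic computation is required to verify that the cyclic sequence of gaps produced by traversing the new $n$-cycle coincides, up to cyclic rotation, with the sequence $(y(i,j))$ ordered by the successor rule $y(i,j) \mapsto y(i+1, j)$ for $i < \hat d$ and $y(\hat d, j) \mapsto y(1, \langle j - y \rangle_c)$, thereby matching the partition class specified in the statement.
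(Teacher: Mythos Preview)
Your outline is correct and structurally matches the paper's proof: both start from $\widehat{\Gamma}=C({\bf a}(n))+\{\hat e_i\}$, observe that $\gcd(c,n)=1$ makes $C({\bf a}(n))^{(c)}$ a single $n$-cycle, list the same extra edges $e_{i,t}$, and then read off the Type~III partition from the gaps between the resulting $q$-cycles.

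The one substantive difference is organisational. You parameterise the $q$-cycles by $V_{i,j}=\{a_i+j-1+kc\}$ and track their positions in the new $n$-cycle via $\pi(i,j)=c^{-1}(a_i+j-1)\bmod n$; this forces you to compute successors in the $\pi$-order and then translate back to the theorem's $(i,j)$-labelling, which (as you note) is the step you have not carried out. The paper instead groups vertices by their residue class modulo $c$, writing $C({\bf a}(n))^{(c)}$ as paths $\Pi_j=P(j+c\,{\bf a}_0(h(j)))$ joined end to end. Because the tail and head of each $e_{i,t}$ differ by $(q-1)c$, both lie in the same $\Pi_j$; reindexing the back edges as $e'_{i,j}$ (the unique one with endpoints in $\Pi_j$) makes the index $j$ in the theorem exactly the residue class. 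Within $\Pi_j$ the $q$-cycles then appear in the natural order $i=1,\ldots,\hat d$, so the successor rule $y(i,j)\mapsto y(i+1,j)$ is immediate, and the wrap from $\Pi_j$ to $\Pi_{\langle j-y\rangle_c}$ comes from the single edge $(j+y+(q\hat d-1)c,\,j)$ linking consecutive paths. The gap $y(i,j)$ is then a direct floor computation $\lfloor(\sigma_i-j)/c\rfloor-\lfloor(\sigma_{i-1}-j)/c\rfloor+u_i$, which unwinds to the stated case split via $\gamma_{i-1}$.

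In short: your $c^{-1}$ approach is valid, but your $j$-index differs from the theorem's by an $i$-dependent shift $\sigma_i\bmod c$, and resolving that is exactly the bookkeeping you flagged as the principal obstacle. The paper sidesteps this by choosing $j$ to be the residue class from the outset, so no inversion modulo $n$ and no reindexing is needed.
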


\begin{proof}
Let $B \in \mc M_n^0(\hat q, \hat s)$, $\hat q=qc$, $\hat s=qc\hat d+y$, and $\npmatrix{\hat y_1 & \ldots & \hat y_{\hat d}} \in \PIII(B)$.

\quad

\noindent{\bf Digraph of $B^c$. } Let $\widehat \Gamma$ be a digraph isomorphic to $\Gamma(B)$. In this first step, we will find a directed graph $\widehat \Gamma^{(c)}$ that is isomorphic to $\Gamma(B^c)$.
By Theorem \ref{thm:TIII_realisation},  $\Gamma(B)$ is isomorphic to: 
    $$\widehat \Gamma=C({\bf a}(n)) +\{\hat e_1,\ldots,\hat e_{\hat d}\},$$
where $\hat e_i:=\{( i\hat q+\sum_{k=1}^{i} \hat y_k, 1+ (i-1) \hat q+\sum_{k=1}^{i}\hat y_k)\}$.

Since $\gcd(n,c)=1$,  $C({\bf a}(n))^{(c)}$ is an $n$-cycle by Remark \ref{remark:powerofcycle}: 
\begin{align}\label{eq:T3-n-cycle} 
 C({\bf a}(n))^{(c)}=C(\langle c\cdot {\bf a}(n)\rangle_n).
\end{align}
In addition, each edge  $\hat e_i$ contributes the following $c$ edges to $\widehat \Gamma^{(c)}$:
\begin{align}\label{eq:T3-connecting edges}
e_{i,t}:=(\langle 1+ i \hat q+\sum_{k=1}^i \hat y_k-t\rangle _n, \langle 1+(i-1)\hat q+\sum_{k=1}^i \hat y_k+c-t\rangle_n), \, t=1,\ldots,c.
\end{align}
 We denote $e_{i,t}:=(v_O(i,t),v_I(i,t))$ for future use.
We have:
$$\widehat \Gamma^{(c)}=C(\langle c\cdot {\bf a}(n)\rangle_n)+\{e_{i,t}:i=1,\ldots,\hat d,t=1,\ldots c\}.$$
Note that $\widehat \Gamma^{(c)}$ consists of an $n$-cycle $C(\langle c\cdot {\bf a}(n)\rangle_n)$ together with $\hat d c$ $q$-cycles, where each $q$-cycle is formed by an edge $e_{i,t}$ connecting two vertices of the $n$-cycle. Also, the weight $1-\alpha$ on the edges $\hat e_i,i=1,\ldots,\hat d$, in $\widehat \Gamma$ gives the weight $1-\alpha$ to the edges $e_{i,t},i=1,\ldots,\hat d,t=1,\ldots,c$, in $\widehat \Gamma^{(c)}$. Thus, the weights on all the $q$-cycles, $C_{i,j},i=1,\ldots,\hat d, j=1,\ldots,c$, are equal to $1-\alpha$ in in $\widehat \Gamma^{(c)}$.

\quad

\noindent{\bf Congruence modulo $c$.} 
To determine the partition class of $B^c$, we will study separately each part of the graph $\widehat \Gamma^{(c)}$ that (for a fixed $j$) involves vertices congruent to $j$ modulo $c$. Let $\Pi_j:=P(j+c {\bf a}_0(h(j)))$, $j=1,\ldots,c$,  where $$ h(j):=
\begin{cases}
\lfloor \frac{n}{c} \rfloor +1&\text{ for }j =1,\ldots,\langle n \rangle_c\\
\lfloor \frac{n}{c} \rfloor &\text{ for }j =\langle n \rangle_c+1,\ldots,c,
\end{cases}$$
or equivalently:
$$ h(j)=1+\hat d q+\left\lfloor\frac{y-j}{c}\right\rfloor.$$
With this notation, we have:
$$C({\bf a}(n))^{(c)}=\cup_{j=1}^c \left(\Pi_j + \{(j+y+(q\hat d -1)c,j)\}\right).$$
From  $j+y+(q\hat d-1)c \in V(\Pi_{\langle j+y\rangle_c})$ and $j \in V(\Pi_j)$, we deduce that $\Pi_{\langle j+y\rangle_c}$
is connected to $\Pi_j$ with the edge $(j+y+(q\hat d -1)c,j)$.
In particular,  $\Pi_j$ is followed by $\Pi_{\langle j-y \rangle_c}$ in $\widehat \Gamma^{(c)}$.

From equation \eqref{eq:T3-connecting edges} we notice that $\langle v_O(i,t) \rangle_c=\langle v_I(i,t) \rangle_c$. This implies that for any pair $i\in\{1,\ldots,\hat d\}$ and $t\in\{1,\ldots,c\}$,
there exists a unique $j\in \{1,\ldots, c\}$ such that the edge $e_{i,t}$ connects two  vertices in $\Pi_j$.
Moreover,  the vertices of $e_{i,t}$ belong to $\Pi_j$ precisely when $t$ is congruent to $ (1-j+\sum_{k=1}^i \hat y_k) \mod c$.
We define $e'_{i,j}:=e_{i,\langle 1-j+\sum_{k=1}^i \hat y_k\rangle_c}$ with $e'_{i,j}=(v'_O(i,j),v'_I(i,j))$, and note that
$$\{e_{i,t}:i=1,\ldots,\hat d,t=1,\ldots c\}=\{e_{i,j}':i=1,\ldots,\hat d,j=1,\ldots c\}.$$

From now on we will work with edges $e_{i,j}'$. We write: $v'_O(i,j)=j+k(i,j)c$ and $v'_I(i,j)=j+l(i,j)c$, where:
\begin{align}\label{eq:kij}
k(i,j)&=iq+\left\lfloor\frac{\sum_{k=1}^i \hat y_k -j}{c}\right\rfloor , 
\end{align}

\begin{align}\label{eq:lij}
l(i,j)&=1+(i-1)q+\left\lfloor\frac{\sum_{k=1}^i \hat y_k -j}{c}\right\rfloor.
\end{align}

\noindent{\bf Parts of the partition. }
We fix $j$, and consider those $q$-cycles in $\widehat \Gamma^{(c)}$ whose  vertices are contained in $V(\Pi_j)=\{j,j+c,\ldots,j+(h(j)-1) c\}$. 
From above we already know that those are precisely the $q$-cycles in $\widehat \Gamma^{(c)}$ that contain an edge $e_{i,j}'$ for some $i=1,\ldots,\hat d$. 

The first vertex in $\Pi_j$ is $j+0c$, followed by vertices $j+c$, $j+2\cdot c$,\ldots, $j+(h(j)-1)\cdot c$, in this order. From  $k(1,j)<k(2,j)<\ldots <k(c,j)$ we deduce that the first $q$-cycle on $V(\Pi_j)$ will contain $e_{1,j}'$, followed by the $q$-cycle containing $e_{2,j}'$, etc. 
The last $q$-cycle in $\Pi_j$, made by the edge $e'_{\hat d,j}$, contains the vertex $v'_O(\hat d,j)=j+k(\hat d,j)c$. Since $k(\hat d,j)=h(j)-1$, we conclude that $v'_O(\hat d,j)$ is the last vertex in $\Pi_j$. In other words, there are no vertices in $\Pi_j$ after the last $q$-cycle.

We define $y(1,j):=l(1,j)-0$ to be the number of vertices that lie before $v'_I(1,j)$ in $\Pi_j$. Next, we focus on the path between two neighbouring $q$-cycles inside our fixed $\Pi_j$. More specifically, for $i=2,\ldots,\hat d$, we want to determine
the number of vertices that are not contained in any $q$-cycle and are on the path connecting the $q-$cycles made by the edges $e'_{ i-1, j}$ and $e'_{i, j}$. The first vertex on this path is $v'_O( i-1 ,j)=j+k(i-1,j)c$ and the last vertex is $v'_I(i, j)=j+l(i,j)c$. The number of vertices on $\Pi_j$ (strictly) between $v'_O( i-1 ,j)$ and $v'_I(i, j)$ is equal to:
$y(i,j):=l(i,j)-k(i-1,j)-1.$ 

In summary, the vector ${\bf y}(j):=\npmatrix{y(1,j) & \ldots & y(\hat d, j)}$ contains the contribution to partitions in the partition class of $\PIII(B^c)$ coming from the part of the graph involving $V(\Pi_j)$.

To determine $y(i,j)$ we write $\hat y_i=u_i c+\beta_i$, where $\beta_i \in \{0,\ldots,c-1\}$, and $\sum_{k=1}^i \beta_k=\eta_i c+\gamma_i$, where  $\gamma_i \in \{0,\ldots,c-1\}$. Inserting $i=1$ in \eqref{eq:lij} we now get:
\begin{align*}
   y( 1,j)&= 1+\left\lfloor\frac{\hat y_1 -j}{c}\right\rfloor =
\begin{cases}
       u_1+1, &\text{ for }j \in \{1,\ldots,\beta_1\}\\
u_1 &\text{ for }j\in \{\beta_1+1,\ldots,c\}.
\end{cases}
\end{align*}
To determine $y(i,j)$ from \eqref{eq:kij} and \eqref{eq:lij} we compute:
\begin{align*}
y(i,j)&=l(i,j)-k(i-1,j)-1 \\
&=u_i+\left\lfloor\frac{\sum_{k=1}^i\beta_k -j}{c}\right\rfloor-\left\lfloor\frac{\sum_{k=1}^{i-1}\beta_k -j}{c}\right\rfloor\\
&=u_i+\left \lfloor \frac{\gamma_{i-1}+\beta_i-j}{c}\right \rfloor-\left \lfloor \frac{\gamma_{i-1}-j}{c}\right \rfloor . 
\end{align*}
We distinguish between two cases: $\gamma_{i-1}+\beta_{i}<c$, and  $\gamma_{i-1}+\beta_{i}\geq c$. If  $\gamma_{i-1}+\beta_{i}<c$, then $\eta_i=\eta_{i-1}$,
$\gamma_i=\gamma_{i-1}+\beta_i$, and 
$$y(i,j)=\begin{cases}
u_i, &j=1,\ldots, \gamma_{i-1} \\
u_i+1, &j=\gamma_{i-1}+1,\ldots, \gamma_{i-1}+\beta_i \\
u_i, &j=\gamma_{i-1}+\beta_i+1,\ldots, c .\\
\end{cases}$$
For $\gamma_{i-1}+\beta_{i}\geq c$ we get
$\eta_i=\eta_{i-1}+1$, $\gamma_i=\gamma_{i-1}+\beta_i-c$, and 
$$y(i,j)=\begin{cases}
u_i+1, &j=1,\ldots, \gamma_{i-1}+\beta_i-c \\
u_i, &j=\gamma_{i-1}+\beta_i-c+1,\ldots, \gamma_{i-1}\\
u_i+1, &j=\gamma_{i-1}+1,\ldots, c .\\
\end{cases}$$
Both cases can be written in one expression as given by \eqref{eq:T3parts}.

\quad

\noindent{\bf Final ordering.}
So far we have proved that ${\bf y} \in \PIII(B^c)$ consists of ${\bf y}(j)$, $j=1,\ldots,c$, in some order. Since we also know that   $\Pi_j$ is followed by $\Pi_{\langle j-y \rangle_c}$ in $\widehat \Gamma^{(c)}$, we conclude that ${\bf y}(j)$ is followed by ${\bf y}({\langle j-y \rangle_c})$ in ${\bf y} \in \PIII(B^c)$. With this final observation, the partition class is uniquely defined.  
\end{proof}

\begin{remark}\label{remark:TIII-TIII}
Given $\hat{\mathbf{y}} \in \PIII(B)$ we can define a matrix $Y'$ that satisfies $\vecm(Y')^T \in \PIII(B^c)$ as follows. From $\hat{\mathbf{y}}$ we define $y(i,j)$ as in the statement of Theorem \ref{thm:TIII-TIII}, and form the matrix:
$$Y:=\left(
\begin{array}{cccc}
 y(1,1) & y(1,2) & \ldots & y(1,c) \\
 \vdots & \vdots & \ddots & \vdots \\
 y(\hat d,1) & y(\hat d,2) & \ldots & y(\hat d, c)\\
\end{array}
\right).$$
The matrix $Y'$ is obtained from $Y$ by a permutation of columns such that the $j$-th column of $Y'$ is equal to ${\bf y}(\langle 1-(j-1)y \rangle_c)$, which is the $\langle 1-(j-1)y \rangle_c$-th column of $Y$. The partition class $\PIII(B^c)$ is then equal to $\T(\bf y)$, where ${\bf y}=\vecm(Y')^T$.  
\end{remark}

Let $n=337$ and consider the arc $\mc K_n(q,s)=\mc K_n(27,337)$. Using Theorem \ref{thm:powers} we have:
\begin{align}\label{eq:T3arcpower}
  \mc K_n(27,337)=\mc K_n(337,324)^{12}=\mc K_n(81,337)^3=\mc K_n(108,337)^4  . 
\end{align}
We will consider the case  $\mc K_n(27,337)=\mc K_n(337,324)^{12}$ after Theorem \ref{thm:TI-TIII}. In the example below we illustrate the proof of Theorem \ref{thm:TIII-TIII} for $\mc K_n(27,337)=\mc K_n(81,337)^3$.

\begin{example}\label{ex:T3-T3}
Let $n=337$, then $\mc K_n(27,337)=\mc K_n(81,337)^3$, where $q=27$, $d=12$, $s=337$, $c=3$, $\hat q=81$, $\hat d=4$, $y=13$ and $y=\hat y_1 + \hat y_2+ \hat y_3+ \hat y_4$.
Let $B\in \mc M_n^0(81,337)$. We will illustrate the parts of the proof of Theorem \ref{thm:TIII-TIII} for a few possible choices for $\hat{\mathbf{y}} \in \PIII(B)$.
\begin{enumerate}
\item Let $\npmatrix{\hat y_1 & \hat y_2 & \hat y_3 & \hat y_4 }=\npmatrix{5 & 3 & 3 & 2 }.$
\quad

\noindent{\bf Digraph of $B^c$. }$\Gamma(B)$ is isomorphic to the following digraph $\widehat \Gamma$,  
$$\widehat \Gamma=C({\bf a}(337)) + \{\hat e_i:i=1,\ldots,4\},$$ 

where $\hat e_i=\{(81i+\sum_{k=1}^{i} \hat y_k, 1+ 81(i-1) +\sum_{k=1}^{i}\hat y_k)\}.$

Note that $\widehat \Gamma$ consists of a $337$-cycle $C({\bf a}(337))$ and four cycles of order $81$ that are made by the edges $\hat e_i$. The $337$-cycle and the edges $\hat e_i$ of $\widehat \Gamma$ give another $337$-cycle and the edges $e_{i,t}$, respectively, in $\widehat \Gamma^{(3)}$. These are explained below in detail.

By Theorem \ref{thm:TIII-TIII}, $\Gamma(B^3)$ is isomorphic to $\widehat \Gamma^{(3)}$, given by:

$$\widehat \Gamma^{(3)}=C(\langle 3\cdot {\bf a}(337)\rangle_n)+\{e_{i,t}:i=1,\ldots,4,t=1,\ldots 3\},$$

where $e_{i,t}:=(\langle 81 i +\sum_{k=1}^i \hat y_k-t+1\rangle _n, \langle 81(i-1)+\sum_{k=1}^i \hat y_k+3-t\rangle_n).$ Table \ref{table:edges-e(i,t)} shows the edges $e_{i,t}$. 
\begin{table}[!h]
\begin{center}
\begin{tabular}{| c |c |c|c|}
\hline
 & $t=1$ & $t=2$ & $t=3$\\
\hline
$e_{1,t}$ & $(86,8)$ & $(85,7)$ & $(84,6)$ \\
\hline
$e_{2,t}$ & $(170,92)$ & $(169,91)$ & $(168,90)$ \\
\hline
$e_{3,t}$ & $(254,176)$ & $(253,175)$ & $(252,174)$ \\
\hline
$e_{4,t}$ & $(337,259)$ & $(336,258)$ & $(335,257)$  \\ 
\hline
\end{tabular}
\caption{\label{table:edges-e(i,t)} The edges $e_{i,t}$}
\end{center}
\end{table}

\noindent{\bf Congruence modulo $c$.}
The $n$-cycle of $\widehat \Gamma^{(3)}$ can be written in terms of the paths $\Pi_j,j=1,2,3$, as follows:
 $$C(\langle 3\cdot {\bf a}(337)\rangle_n)=\cup_{j=1}^3 \left(\Pi_j + \{(j+334,j)\}\right),$$

 where $\Pi_j=P(j+3 {\bf a}_0(h(j)))$ and $h(j)=109+\left\lfloor\frac{13-j}{3}\right\rfloor$. Also, recall that the vertices of the edge $e'_{i,j}$ belong $\Pi_j$ where, $e'_{i,j}=e_{i, \langle 1-j+\sum_{k=1}^i \hat y_k\rangle_c}$. 

 For example, $j=1$ gives us the path $\Pi_1=P(1+3{\bf a}_0(113))$, and the edges $e_{i,1}'$, $i=1,2,3,4$, connect the vertices in $\Pi_1$ to form four cycles of order $27$. 

Writing $e'_{i,1}=(1+3k(i,1), 1+3l(i,1))$, we get the values given in Table \ref{table:k(i,j)-l(i,j)} for $k(i,1)$ and $l(i,1)$. 
\begin{table}[!h]
\begin{center}
\begin{tabular}{| c |c |c|c|}
\hline
 & $e'_{i,1}$ & $k(i,1)$ & $l(i,1)$\\
\hline
$i=1$ & $(85,7)$ & $28$ & $2$ \\
\hline
$i=2$ & $(169,91)$ & $56$ & $30$ \\
\hline
$i=3$ & $(253,175)$ & $84$ & $58$ \\
\hline
$i=4$ & $(337,259)$ & $112$ & $86$  \\ 
\hline
\end{tabular}
\caption{\label{table:k(i,j)-l(i,j)}$k(i,1)$ and $l(i,1)$ for $\Pi_1$}
\end{center}
\end{table}

\noindent{\bf Parts of the partition. } 
To illustrate the next part of the proof, let us continue with $j=1$. Considering the vertices of $\Pi_1$ in the natural path order i.e. $V(\Pi_1)=\{1,1+3,\ldots,1+3(113-1)\}$, we see that the first vertex in $\Pi_1$ contained in a $27$-cycle (made by the edge $e'_{1,1}$) is the vertex, $v'_I(1,1)=7=1+3(2)$. This implies, $l(1,1)=2$, and $v'_I(1,1)$ is the third vertex of $\Pi_1$. Therefore, $y(1,1)=2$ (recalling that $y(i,j)$ is defined to be the number of vertices in $\Pi_j$ before the vertex $v'_I(i,j)$).

Next, $y(2,1)$ is the number of vertices not contained in a $27$-cycle strictly between $v'_O(1,1)$ and $v'_I(2,1)$. Since, 
$v'_O(1,1)=85=1+3(28)$ and $v'_I(2,1)=91=1+3(30)$, we have  $k(1,1)=28$ and $l(2,1)=30$. This implies, $y(2,1)=l(2,1)-k(1,1)-1=1.$ Similarly, $y(3,1)=l(3,1)-k(2,1)-1=1$ and $y(4,1)=l(4,1)-k(3,1)-1=1.$ Also, note that $v'_O(4,1)=337=1+3(112)$ is the last  vertex of $\Pi_1$.

More generally, to get all the parts of the partition we express the $k(i,j)$ and $l(i,j)$ in terms of the parameters $u_i$, $\beta_i$, and $\gamma_i$. In this case, we write $ \hat y_i=3u_i+\beta_i$ and 
$\sum_{k=1}^i \beta_k=3\eta_i+\gamma_i$,
$\beta_i, \gamma_i\in \{0,1,2\}$, to determine the following parameters: 
\begin{align*}
u_1&=u_2=u_3=1, \, u_4=0, \\ \beta_1&=\beta_4=2, \, \beta_2=\beta_3=0, \\
\gamma_1&=\gamma_2=\gamma_3=2, \, \gamma_4=1.
\end{align*}

 Using equation \eqref{eq:T3parts}, we get $y(i,j)$, $i=1,\ldots,4$, $j=1,\ldots,3$. Thus, the $Y$ matrix (as defined in Remark \ref{remark:TIII-TIII}) is equal to:
$$Y=\left(
\begin{array}{ccc}
 y(1,1) &  y(1,2) & y(1,3) \\
 y(2,1) &  y(2,2) & y(2,3) \\
 y(3,1) &  y(3,2) & y(3,3)\\
 y(4,1) &  y(4,2) & y(4,3)\\
 \end{array}
\right)=\left(
\begin{array}{ccc}
 2 & 2 & 1  \\
 1 & 1 & 1  \\
 1 & 1 & 1 \\
 1 & 0 & 1 \\
\end{array}
\right).$$

In particular, the elements of $\PIII(B^3)$ have one part equal to $2$, one part equal to $0$ and all other parts equal to $1$.

\noindent{\bf Final ordering.} To determine $\PIII(B^3)$, we still need the ordering of the parts. For this we look at the $Y'$ matrix in Remark \ref{remark:TIII-TIII}:

$$Y'=\left(
\begin{array}{ccc}
 y(1,1) &  y(1,3) & y(1,2) \\
 y(2,1) &  y(2,3) & y(2,2) \\
 y(3,1) &  y(3,3) & y(3,2)\\
 y(4,1) &  y(4,3) & y(4,2)\\
\end{array}
\right)=\left(
\begin{array}{ccc}
 2 & 1 & 2  \\
 1 & 1 & 1  \\
 1 & 1 & 1 \\
 1 & 1 & 0 \\
\end{array}
\right).$$

Thus, the partition class $\PIII(B^3)$ is given by ${\bf y}=\T(\vecm(Y')^T).$

\item Let $\npmatrix{\hat y_1 & \hat y_2 & \hat y_3 & \hat y_4 }=\npmatrix{11 & 2 & 0 & 0}.$

Writing $ \hat y_i=3u_i+\beta_i$ and 
$\sum_{k=1}^i \beta_k=3\eta_i+\gamma_i$
$\beta_i, \gamma_i\in \{0,1,2\}$, gives:
$u_1=3 $, $u_2=u_3=u_4=0$;  $\beta_1=\beta_2=2$, $\beta_3=\beta_4=0$ and $\gamma_1=2$, $\gamma_2=\gamma_3=\gamma_4=1$.
Using equation \eqref{eq:T3parts} to determine $y(i,j)$, $i=1,\ldots,4$, $j=1,\ldots,3$, we get the following matrices (defined in Remark \ref{remark:TIII-TIII}):
$$Y=\left(
\begin{array}{cccc}
 4 & 4 & 3 \\
 1 & 0 & 1 \\
 0 & 0 & 0\\
 0 & 0 & 0 \\
\end{array}
\right) \text{ and } Y'=\left(
\begin{array}{cccc}
 4 & 3 & 4 \\
 1 & 1 & 0 \\
 0 & 0 & 0\\
 0 & 0 & 0 \\
\end{array}
\right).$$ 
The partition class $\PIII(B^3)$ is given by ${\bf y}=\T(\vecm(Y')^T).$

\item Let $\npmatrix{\hat y_1 & \hat y_2 & \hat y_3 & \hat y_4 }=\npmatrix{13 & 0 & 0 & 0}.$ From $ \hat y_i=3u_i+\beta_i$ and 
$\sum_{k=1}^i \beta_k=3\eta_i+\gamma_i$,
$\beta_i, \gamma_i\in \{0,1,2\}$, we get:
$u_1=4 $, $u_2=u_3=u_4=0$;  $\beta_1=1$, $\beta_2=\beta_3=\beta_4=0$ and $\gamma_1=\gamma_2=\gamma_3=\gamma_4=1$.
In this case:
$$Y=Y'=\left(
\begin{array}{cccc}
 5 & 4 & 4 \\
 0 & 0 & 0 \\
 0 & 0 & 0\\
 0 & 0 & 0 \\
\end{array}
\right),$$ 
and the partition class $\PIII(B^3)$ is given by ${\bf y}=\T(\vecm(Y')^T).$

\item Let $\npmatrix{\hat y_1 & \hat y_2 & \hat y_3 & \hat y_4 }=\npmatrix{4 & 3 & 3 & 3}.$ 
Writing $\hat y_i=3u_i+\beta_i$ and $\sum_{k=1}^i \beta_k=~3\eta_i+~\gamma_i$
$\beta_i, \gamma_i\in \{0,1,2\}$ gives:
$u_1=~u_2=u_3=u_4=1$; $\beta_1=1$, $\beta_2=\beta_3=\beta_4=0$ and $\gamma_1=\gamma_2=\gamma_3=~\gamma_4=1$.

Again,
$$Y=Y'=\left(
\begin{array}{cccc}
 2 & 1 & 1 \\
 1 & 1 & 1 \\
 1 & 1 & 1\\
 1 & 1 & 1 \\
\end{array}
\right)$$ 
and the partition class $\PIII(B^3)$ is given by ${\bf y}=\T(\vecm(Y')^T).$
\end{enumerate}
\end{example}

\begin{corollary}\label{cor:TIII-TIII}
Let $A\in \mc M_n^0(q,qc\hat d+y)$ and $\npmatrix{ y_1 & \ldots &  y_{c\hat d}} \in \PIII(A)$. Then $A=B^c$ for some $B \in \mc M_n^0(qc,qc\hat d+y)$ if and only if:
\begin{enumerate}
    \item For every $i\in \{1,\ldots,\hat d\}$ there exists $u_i$ so that $y_{t\hat d+i} \in \{u_i,u_i+1\}$ for $t=0,\ldots,c-1$. (If for some $i$ all $y_{t\hat d+i}$ are equal, then we say that they are equal to $u_i$.) 
    
    We define, $\beta_i:=|\{t\in \{0\ldots,c-1\}; y_{t \hat d+i}=u_i+1\}|$, $i=1,\ldots,\hat d$, and $\sum_{k=1}^i \beta_i:=\eta_i c+\gamma_i$, where $\gamma_i \in \{0,\ldots,c-1\}$. In addition, $\gamma_0:=0$.

    With these parameters we define $y(i,j)$ as follows :
$$y(i,j):=\begin{cases}
u_i+1, &j=\langle\gamma_{i-1}+1\rangle_c,\ldots, \langle\gamma_{i-1}+\beta_i\rangle_c \\
u_i, &j=\langle\gamma_{i-1}+\beta_i+1\rangle_c, \ldots, \langle\gamma_{i-1}+c\rangle_c
\end{cases},$$
where $i=1,\ldots,\hat d$, $j=1\ldots,c$.

    \item The partition class $\PIII(A)$ consists of the parts $y(i,j)$, $i=1,\ldots,\hat d$, $j=1\ldots,c$. Further, the part $y(i,j)$ is followed by $y(i+1,j)$ for $i=1,\ldots, \hat d-1$ and $y(\hat d,j)$ is followed by $y(1,\langle j-y \rangle_c)$ in $\PIII(A)$.
 \end{enumerate}
If the conditions above are satisfied, then $A=B^c$ for $B \in \mc M_n^0(qc,qc\hat d+y)$ with $\PIII(B)=\T(\npmatrix{\hat y_1 &\ldots &\hat y_{\hat d}}),$  where $\hat y_i:=\sum_{j=1}^c y(i,j)$, for $i=1,\ldots,\hat d$.

\end{corollary}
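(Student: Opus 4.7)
The plan is to deduce this corollary directly from Theorem \ref{thm:TIII-TIII} by reinterpreting that theorem as a characterisation of exactly which partition classes of $A$ can arise as $\PIII(B^c)$ for some $B$.

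For necessity, I would suppose $A=B^c$ for some $B\in\mc M_n^0(qc,qc\hat d+y)$ with $\hat{\mathbf y}=\npmatrix{\hat y_1 & \ldots & \hat y_{\hat d}}\in \PIII(B)$. Writing $\hat y_i=u_ic+\beta_i$ with $\beta_i\in\{0,\ldots,c-1\}$, Theorem \ref{thm:TIII-TIII} produces the parts $y(i,j)$ defined by \eqref{eq:T3parts} and the ordering $\PIII(A)=\PIII(B^c)=\T(\vecm(Y')^T)$ from Remark \ref{remark:TIII-TIII}. Since $\gcd(qc,y)=1$ forces $\gcd(y,c)=1$, the map $k\mapsto \langle 1-(k-1)y\rangle_c$ is a bijection on $\{1,\ldots,c\}$, so the column permutation of Remark \ref{remark:TIII-TIII} merely reshuffles the $c$ columns of $Y$. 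Reading $\vecm(Y')^T$ column-by-column then shows that the entries $y_{t\hat d+i}$, $t=0,\ldots,c-1$, are precisely the entries of the $i$-th row of $Y$, which by \eqref{eq:T3parts} all lie in $\{u_i,u_i+1\}$. Counting the entries equal to $u_i+1$ in that row gives exactly $\beta_i$, matching the definition in the corollary, and condition 2 is just a restatement of the ordering clause of Theorem \ref{thm:TIII-TIII}.

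For sufficiency, I would assume conditions 1 and 2 hold and set $\hat y_i:=\sum_{j=1}^c y(i,j)=u_ic+\beta_i$. The identity $\sum_i \hat y_i=\sum_{\ell=1}^{c\hat d} y_\ell=y$ verifies that $\hat{\mathbf y}$ is a legitimate partition of $y$ into $\hat d$ nonnegative parts, so by Theorem \ref{thm:TIII_realisation} there exists $B\in \mc M_n^0(qc,qc\hat d+y)$ with $\PIII(B)=\T(\hat{\mathbf y})$ and with its $qc$-cycle edges carrying the weight determined by the common $q$-cycle weight of $A$. Applying Theorem \ref{thm:TIII-TIII} to this $B$ regenerates exactly the parts $y(i,j)$ and the ordering prescribed in the corollary, hence $\PIII(B^c)=\PIII(A)$. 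Since a sparsest Type III realisation is determined up to permutation similarity by its partition class together with the common weight on its $q$-cycles, this forces $A=B^c$. The formula $\hat y_i=\sum_{j=1}^c y(i,j)$ in the final sentence of the corollary then drops out of the construction.

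The main obstacle I anticipate is the bookkeeping between the doubly-indexed parts $y(i,j)$ of Theorem \ref{thm:TIII-TIII} and the singly-indexed parts $y_\ell$ of the corollary: one must carefully track how the vectorisation $\vecm(Y')^T$ and the column permutation of Remark \ref{remark:TIII-TIII} interact with the cyclic-rotation equivalence defining $\PIII(A)$, in order to justify that condition 1 can be read off row-by-row from $Y$. A secondary, but routine, point is confirming that the $q$-cycles of $B^c$ automatically share a common weight (so that the common-weight hypothesis needed to invoke Theorem \ref{thm:TIII_realisation} on $A$ is free), which was already established inside the proof of Theorem \ref{thm:TIII-TIII}.
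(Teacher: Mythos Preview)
Your proposal is correct and follows essentially the same route as the paper: the paper's proof is the single sentence ``The result follows directly from Theorem \ref{thm:TIII-TIII},'' and your write-up is a faithful unpacking of precisely that deduction, spelling out the necessity/sufficiency split and the bookkeeping between $y(i,j)$ and $y_\ell$ that the paper leaves implicit.
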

\begin{proof}
The result follows directly from Theorem \ref{thm:TIII-TIII}. 
\end{proof}

\begin{example}\label{ex:corT3-T3}
Let $n=s=337$, $q=27$, $d=12$, $c=3$, $y=13$ and $n=s=qd+y$. By Definition \ref{def:TIIIpartition class}, any partition class containing $12$ non-negative integers that sum to $13$ is a partition class for some matrix in $\mc M_n^0(27,337)$.

Let $A\in \mc M_n^0(27,337)$ and ${\bf y}=\npmatrix{y_1 & \ldots & y_{12}}\in \PIII(A)$. Let $Y'_A$ be a $\hat d \times c$ matrix satisfying $\vecm(Y'_A)^T={\bf y}$:

$$Y'_A=\left(
\begin{array}{ccc}
 y_1 & y_5 & y_9 \\
 y_2 & y_6 & y_{10} \\
 y_3 & y_7 & y_{11} \\
 y_4 & y_8 & y_{12} \\
 \end{array}
\right).$$
We want to determine when $A=B^3$ for some matrix $B \in \mc M_n^0(81,337)$.

From the first part of Corollary \ref{cor:TIII-TIII}, we know that for $A$ to be a power of any matrix $B \in \mc M_n^0(81,337)$, each row in $Y'_A$ must have elements from a set of the form $\{u_i,u_i+1\}$ for some integer $u_i \geq 0$. Under this constraint, we can choose the row sums of $Y'_A$ in many possible ways taking care that the sum of all the entries in $Y'_A$ is equal to $13$. Let us look at a few specific examples.  
\begin{enumerate}
\item Let $$Y'_A=\left(
\begin{array}{cccc}
 1 & 1 & 0 \\
 0 & 0 & 0 \\
 2 & 2 & 2\\
 1 & 2 & 2\\
 \end{array}
\right).$$
From the first item of Corollary \ref{cor:TIII-TIII}, we get:
$u_1=u_2=0$, $u_3=2$, $u_4=1$, $\beta_1=2$, $\beta_2=\beta_3=0$, $\beta_4=2$ and $\gamma_1=\gamma_2=\gamma_3=2$, $\gamma_4=1.$

Thus,
$$Y=\left(
\begin{array}{ccc}
 y(1,1) & y(1,2) & y(1,3)  \\
 y(2,1) & y(2,2) & y(2,3)  \\
 y(3,1) & y(3,2) & y(3,3) \\
 y(4,1) & y(4,2) & y(4,3) \\
\end{array}
\right)=\left(
\begin{array}{cccc}
 1 & 1 & 0 \\
 0 & 0 & 0 \\
 2 & 2 & 2\\
 2 & 1 & 2\\
 \end{array}
\right).$$

Using the second item of Corollary \ref{cor:TIII-TIII}, we get:
$$Y'=\left(
\begin{array}{ccc}
 y(1,1) & y(1,3) & y(1,2)  \\
 y(2,1) & y(2,3) & y(2,2)  \\
 y(3,1) & y(3,3) & y(3,2) \\
 y(4,1) & y(4,3) & y(4,2) \\
 \end{array}
\right)=\left(
\begin{array}{cccc}
 1 & 0 & 1 \\
 0 & 0 & 0 \\
 2 & 2 & 2\\
 2 & 2 & 1\\
 \end{array}
\right).$$

We see that $\vecm(Y'_A)^T$ is permutationally equivalent to $\vecm(Y')^T$ and thus satisfies all the conditions of Corollary \ref{cor:TIII-TIII}. Therefore, $A=B^3$ for $B \in \mc M_n^0(81,337)$ with $\PIII(B)=\T(\hat {\bf y})$, where $\hat {\bf y}=\npmatrix{6 & 5 & 2 & 0}$.
\item For $$Y'_A=\left(
\begin{array}{cccc}
 4 & 4 & 3 \\
 1 & 0 & 1 \\
 0 & 0 & 0\\
 0 & 0 & 0\\
 \end{array}
\right),$$
we get $u_1=3$, $u_2=u_3=u_4=0$; $\beta_1=\beta_2=2$, $\beta_3=\beta_4=0$ and $\gamma_1=2$,$\gamma_2=\gamma_3=\gamma_4=1$. 

Thus,
$$Y=\left(
\begin{array}{cccc}
 4 & 4 & 3 \\
1 & 0 & 1 \\
 0 & 0 & 0\\
 0 & 0 & 0\\
 \end{array}
\right).$$
Referring back to Example \ref{ex:T3-T3} (Item 2.) we know that $$Y'=\left(
\begin{array}{cccc}
 4 & 3 & 4 \\
1 & 1 & 0 \\
 0 & 0 & 0\\
 0 & 0 & 0\\
 \end{array}
\right).$$
This time the second item of Corollary \ref{cor:TIII-TIII} is not satisfied. Hence, $A \ne B^3$ for any $B \in \mc M_n^0(81,337)$. 

\item Let $$Y'_A=\left(
\begin{array}{cccc}
 2 & 1 & 1 \\
 1 & 1 & 1 \\
 1 & 1 & 1\\
 1 & 1 & 1\\
 \end{array}
\right).$$
In this case we get: $u_1=u_2=u_3=u_4=1$; $\beta_1=1$, $\beta_2=\beta_3=\beta_4=0$ and $\gamma_1=\gamma_2=\gamma_3=\gamma_4=1$.
Thus, $$Y=\left(
\begin{array}{cccc}
 2 & 1 & 1 \\
 1 & 1 & 1 \\
 1 & 1 & 1\\
 1 & 1 & 1\\
 \end{array}
\right).$$

From Item $4$ of Example \ref{ex:T3-T3} we note that $Y'=Y$ and $A=B^3$ for $B \in \mc M_n^0(81,337)$ with the $\PIII(B)=\T(\hat {\bf y})$, where $\hat {\bf y}=\npmatrix{4 & 3 & 3 & 3}.$

\end{enumerate}
\end{example}

\subsection{Type III arc is a power of a Type I arc} In this subsection we assume
$n=qd+y$, $\gcd(qd,y)=1$, $y \in \{1,\ldots, q-1\}$. By Theorem \ref{thm:powers}: $$\mc K_n(qd+y,qd)^d=\mc K_n(q,qd+y).$$ 
The following theorem determines the partition class of $B^d$ for $B \in \mc M_n^0(qd+y,qd)$. We note that the result can also be obtained from Theorem \ref{thm:TIII-TIII} by taking $i=1$, $\hat d=1$, and $c=d$.

\begin{theorem}\label{thm:TI-TIII}
Let $B \in \mc M_{n}^0(qd+y,qd)$, where
$n=s=qd+y$, $\gcd(d,s)=1$ and $y \in \{1, \ldots, q-1\}$. We define $u$ and $\beta$ by writing $y:= u d+\beta$, where $\beta\in \{0,\ldots,d-1\}$. For $j=1,\ldots,d$ we define:
\begin{align}\label{eq:T1-T3parts}
    y(j):=\begin{cases}
u+1, & j=1,\ldots,\beta\\
u, & j=\beta+1,\ldots,d . 
\end{cases}
\end{align}
The partition class $\PIII(B^d)$ is defined as follows:
\begin{itemize}
\item the parts of the partition are equal to $y(j)$, $j=1,\ldots,d$, 
\item in the partition $y(j)$ is followed by $y(\langle j-y \rangle_d)$.
\end{itemize}
\end{theorem}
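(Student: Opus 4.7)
My plan is to follow the digraph-based strategy used in Theorems \ref{thm:TI-TII} and \ref{thm:TIII-TIII}. Since $B \in \mc M_n^0(qd+y,qd)$ is a Type I sparsest realizing matrix, Theorem \ref{thm:TI_realisation} tells us that $\Gamma(B)$ is, up to isomorphism, $\widehat\Gamma = C({\bf a}(n)) + \{(qd,1)\}$, where $qd$ is the length of the unique small cycle and $n = qd + y$. If the edge $(qd,1)$ carries weight $1-\alpha$, then after forming $\widehat\Gamma^{(d)}$ all the small cycles will inherit this weight.

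The key computation is to identify $\widehat\Gamma^{(d)}$. Since $\gcd(d,n) = \gcd(d,s) = 1$, Remark \ref{remark:powerofcycle} gives $C({\bf a}(n))^{(d)} = C(\langle d\cdot{\bf a}(n)\rangle_n)$, still an $n$-cycle but traversed in steps of $d$. The single extra edge $(qd,1)$ of $\widehat\Gamma$ produces $d$ new edges
$$e_t := (\langle qd+1-t\rangle_n,\; \langle d+1-t\rangle_n), \qquad t = 1, \ldots, d,$$
in $\widehat\Gamma^{(d)}$. Both endpoints of each $e_t$ share the same residue modulo $d$, so each $e_t$ together with a subpath of the new $n$-cycle closes a cycle of length $q$. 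Thus $\widehat\Gamma^{(d)}$ is a Type III digraph with one $n$-cycle and $d$ cycles of length $q$ of equal weight, as expected.

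To read off the partition class, I would partition the vertex set of $\widehat\Gamma^{(d)}$ by residues modulo $d$: for $j=1,\ldots,d$, let $\Pi_j$ be the subpath of $C(\langle d\cdot{\bf a}(n)\rangle_n)$ consisting of vertices congruent to $j$ modulo $d$, listed in natural traversal order. Exactly one edge $e_t$ falls inside each $\Pi_j$, so each $\Pi_j$ contributes precisely one part $y(j)$ to the partition, equal to the number of vertices of $\Pi_j$ lying on the $s$-cycle but not on the $q$-cycle. Writing $y = ud + \beta$ with $\beta \in \{0,\ldots,d-1\}$, elementary counting of how $y$ splits across residue classes yields $y(j) = u+1$ for $j = 1, \ldots, \beta$ and $y(j) = u$ for $j = \beta+1,\ldots,d$. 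Finally, advancing one full lap along $C(\langle d\cdot{\bf a}(n)\rangle_n)$ shifts the residue class by $-y \pmod d$, so in $\PIII(B^d)$ the part $y(j)$ is followed by $y(\langle j-y\rangle_d)$.

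The hardest part is the bookkeeping of residues and indices in Step 2; no substantive new difficulty arises beyond what already appears in Theorem \ref{thm:TIII-TIII}. In fact, the cleanest route is to invoke Theorem \ref{thm:TIII-TIII} directly with $\hat d = 1$, $c = d$, and $\hat y_1 = y$. The proof of Theorem \ref{thm:TIII-TIII} goes through unchanged when $\hat d = 1$, and under this specialization the parameters become $u_1 = u$, $\beta_1 = \beta$, $\gamma_0 = 0$, $\gamma_1 = \beta$; equation \eqref{eq:T3parts} and the subsequent ordering statement then reduce exactly to the formulas in the present theorem.
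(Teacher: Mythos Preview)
Your proposal is correct and follows essentially the same approach as the paper's proof: identify $\widehat\Gamma^{(d)}$ as an $n$-cycle plus $d$ shortcut edges, partition the vertices into residue classes $\Pi_j$ modulo $d$, count the connecting vertices in each $\Pi_j$ to obtain $y(j)$, and track how the residue class advances by $-y$ to get the ordering. The paper makes one cosmetic choice you do not---it relabels so the extra edge is $(\hat s,y+1)$ rather than $(qd,1)$---but this is an isomorphic representative and changes nothing of substance; your shortcut of specializing Theorem~\ref{thm:TIII-TIII} with $\hat d=1$, $c=d$, $\hat y_1=y$ is also explicitly noted by the paper just before the statement of the theorem.
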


\begin{proof}
Let $B \in M_{n}^0(\hat q, \hat s)$, where $\hat q=qd$, $\hat s=qd+y$, $\gcd(\hat q,n)=1$. 
By Theorem \ref{thm:TI_realisation}, $\Gamma(B)$ is isomorphic to $C({\bf a}(n)) +\{(qd,1)\}$, which is isomorphic to: 
    $$\widehat \Gamma:=C({\bf a}(n)) +\{(\hat s,y+1)\}.$$

\quad

\noindent{\bf Digraph of $B^d$. } The digraph of $\Gamma(B^d)$ is isomorphic to $\widehat \Gamma^{(d)}$:
$$\widehat \Gamma^{(d)}=C(\langle d\cdot {\bf a}(n)\rangle_n)+\{e_{t}:t=1,\ldots d\},$$ where $$e_t=(s-t+1,y+d-t+1), t=1,\ldots d.$$
In particular, $\widehat \Gamma^{(d)}$ consists of an $n$-cycle $C(\langle d\cdot {\bf a}(n)\rangle_n)$ together with $d$ $q$-cycles, where each $q$-cycle is formed by an edge $e_{t}$ connecting two vertices of the $n$-cycle. The edge $(\hat s,y+1)$ in $\widehat \Gamma$ and the edges $e_t,t=1,\ldots,d$, in $\widehat \Gamma^{(d)}$ have weights equal to $1-\alpha$. Equivalently, all the $q$-cycles in $\widehat \Gamma^{(d)}$ have the weight $1-\alpha$.

\quad

\noindent{\bf Congruence modulo $d$.} Let
$\Pi_j:=P(j+d {\bf a}_0(h(j)))$, where $ h(j)=1+ q+\left\lfloor\frac{y-j}{d}\right\rfloor$.
We have:
$$C({\bf a}(n))^{(d)}=\cup_{j=1}^d \left(\Pi_j + \{((q -1)d+y+j,j)\}\right),$$
and note that $\Pi_j$ is followed by $\Pi_{\langle j-y \rangle_d}$ (equivalently, $\Pi_{\langle j+y \rangle_d}$ is followed by $\Pi_j$) in $\widehat \Gamma^{(d)}$. Both vertices of $e_t$ belong to $\Pi_j$, where $t$ is congruent to $\langle 1+y-j \rangle_d$. In other words, the edge $e_{\langle 1+y-j \rangle_d}$ connects two vertices in $\Pi_j$ to form a $q$-cycle in $\widehat \Gamma^{(d)}$. We define $e_j':=e_{\langle 1+y-j \rangle_d}$ and note that $\{e_t:t=1,\ldots,d\}=\{e_j':j=1,\ldots,d\}$.

Let $e_j'=(v_O'(j),v_I'(j))$ and $y=u d+\beta$, $\beta \in \{0,\ldots,d-1\}$. Then, $v_O'(j)=j+k(j)d$ and $v_I'(j)=j+l(j)d$, where 

\begin{align}
k(j):=
    \begin{cases}
q+u &\text{ for }j \leq \beta\\
q+u-1 &\text{ for }j >\beta , 
\end{cases}
\end{align}

\begin{align}
l(j):=
    \begin{cases}
u+1 &\text{ for }j \leq \beta\\
u &\text{ for }j >\beta .
\end{cases}
\end{align}

\quad

\noindent{\bf Parts of the Partition.}  For a fixed $j$, there is only one $q$-cycle made by the edge $e'_j=(v'_O(j),v'_I(j))$ in $\Pi_j$. Also, $k(j)=h(j)-1$ implies that $v'_O(j)=j+k(j)d$ is the last vertex in $\Pi_j$.

We define $y(j):=l(j)-0$ as the number of vertices that lie before $v'_I(j)$ in $\Pi_j$. Thus, $y(j)$ is equal to $l(j)$ and is as defined in \eqref{eq:T1-T3parts} in the statement of the theorem.

\quad

\noindent{\bf Ordering of Parts.} Since the $q$-cycle with the edge $e_j'$ is followed by the $q$-cycle containing the edge $e'_{\langle j-y \rangle_d}$, the part $y(j)$ is followed by the part $y(\langle j-y \rangle_d)$ in the partition. 

\end{proof}

\begin{remark}\label{remark:TI-TIII}
  For $B \in \mc M_n^0(qd,qd+y)$, let us define the row vector ${\bf y}$ consisting of parts of the partition class $\PIII(B^d)$, i.e
 ${\bf y}:=\npmatrix{y(1) & \ldots & y(d)}.$ Since, $y(j)$ is followed by $y(\langle j-y \rangle_{d})$ in the partition class $\PIII(B^d)$, we can permute the elements of ${\bf y}$ to get the row vector:
 $${\bf y'}:=\npmatrix{y(1) & y(\langle 1-y \rangle_d) & \ldots & y(\langle 1-(d-1)y) \rangle_d} \in \PIII(B^d).$$
The partition class $\PIII(B^d)$ is equal to $\T(\bf y')$.  
\end{remark}

\begin{example}
For $n=337$  we have $\mc K_n(27,337)=\mc K_n(337,324)^{12}$, where $q=27$, $n=s=337$, $d=12$,  $y=s-qd=13$ and $\mc M_n^0(qd+y,qd)=\mc M_n^0(337,324).$ Let $B \in \mc M_n^0(337,324)$. 

\quad

\noindent{\bf Digraph of $B^{12}$.} The digraph $\Gamma(B^{12})$ is isomorphic to $\widehat \Gamma^{(12)}$:

$$\widehat \Gamma^{(12)}=C(\langle 12 \cdot {\bf a}(337) \rangle_{n})+\{e_t:t=1,\ldots,12\},$$
where $$e_t=(338-t,26-t), t=1,\ldots,12.$$

\noindent{\bf Congruence modulo $d$.} For $\Pi_j=(j+12{\bf a}_0(h(j))),$ with $h(j)=28+\lfloor \frac{13-j}{12}\rfloor$, the edge $e'_j=e_{\langle 14-j \rangle_{12}}$ connects the vertices in $\Pi_j$ to form a single $27$-cycle. 

For example, the edge $e'_1=e_1=(337,25)$ forms a $27$-cycle by connecting two vertices of $\Pi_1=P(1+12{\bf a}_0(29))$. Similarly,  the edge $e'_2=e_{12}=(326, 14)$ connects vertices in $\Pi_2=P(2+12{\bf a}_0(28))$ to form a $27$-cycle.

Writing $e'_1=(1+12k(1), 1+12l(1))$ gives $k(1)=28$ and $l(1)=2$. Similarly, $e'_2=(2+12k(2),2+12l(2))$ gives $k(2)=27$ and $l(2)=1$.

\noindent{\bf Parts of the Partition.} Recall that $y(j)$ is the number of vertices in $\Pi_j$ before $v_I'(j)=j+12l(j)$. Continuing with $\Pi_1$, from $l(1)=2$ we get $y(1)=2$. Similarly, for $\Pi_2$ we have $l(2)=1$ which gives $y(2)=1$.
More generally, to get all the parts $y(j)$, we write $y$ in terms of $u$ and $\beta$: $y=12u+\beta$, $\beta \in \{0,\ldots,11\}$. This gives us:
\begin{align*}
y(j):=
    \begin{cases}
2 &\text{ for }j \leq 1\\
1 &\text{ for }j >1 .
\end{cases}
\end{align*}

\noindent{\bf Ordering.} From Remark \ref{remark:TI-TIII}, we have:
$${\bf y'} =\left(
\begin{array}{cccccccccccc}
 y(1) & y(12) & y(11) & y(10) & y(9) & y(8) & y(7) & y(6) & y(5) & y(4) & y(3) & y(2) \\
 \end{array}
\right).\\
$$
Thus, ${\bf y'}=\left(
\begin{array}{cccccccccccc}
2 & 1 & 1 & 1 & 1 & 1 & 1 & 1 &1 &1 &1 &1
 \end{array}
\right)$ and the partition class $\PIII(B^{12})$ is given by $\T({\bf y'})$. Note that this uniquely defined partition class for $B^{12}$ is same as the partition class $\PIII(B'^3)$, where $B' \in \mc M_n^0(81,337)$ and $\PIII(B')=\T(\hat {\bf y})$ for $\hat {\bf y}=\npmatrix{4 & 3 & 3 & 3}$  (Item $4$, Example \ref{ex:T3-T3}).

\end{example}

\begin{corollary}\label{cor:TI-TIII}
Let $A \in \mc M_n^0(q,qd+y)$ have the associated partition $\npmatrix{y_1 & \ldots & y_d}\in \PIII(A)$. Then $A=B^d$ for some $B \in \mc M_n^0(qd,qd+y)$ if and only if:
\begin{enumerate}
    \item There exists $u$ such that $y_j \in \{u,u+1\}$. (If all $y_j$ are equal then we say they are all equal to $u$). 
    
    We define $\beta:=|\{j\in \{1\ldots,d\}; y_j=u+1\}|$ and 
    $$y(j):=\begin{cases}
u+1 &\text{ for } j=1,\ldots,\beta \\ 
u  &\text{ for } j=\beta+1,\ldots,d. 
\end{cases}$$

\item The partition class $\PIII(A)$ consists of the parts $y(j)$, $j=1,\ldots,d$, where the part $y(j)$ is followed by the part $y(\langle j-y \rangle_d)$ in $\PIII(A)$.
\end{enumerate}
\end{corollary}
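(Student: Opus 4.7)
The plan is to obtain this corollary as a direct consequence of Theorem \ref{thm:TI-TIII}, exactly in the spirit of Corollary \ref{cor:TI-TII} and Corollary \ref{cor:TIII-TIII}. Both directions of the equivalence will come from the tight description Theorem \ref{thm:TI-TIII} gives for the partition class $\PIII(B^d)$ when $B \in \mc M_n^0(qd+y, qd)$.

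For the forward direction, I would assume $A = B^d$ for some Type I realiser $B$ and simply read off conditions 1 and 2 from Theorem \ref{thm:TI-TIII}: the decomposition $y = ud + \beta$ with $\beta \in \{0,\ldots,d-1\}$ produces parts $y(j) \in \{u, u+1\}$ with exactly $\beta$ of them equal to $u+1$, and the cyclic rule ``$y(j)$ is followed by $y(\langle j - y \rangle_d)$'' is the one prescribed in the theorem.

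For the converse, I would recover $u$ and $\beta$ from the given partition $\PIII(A)$: $u$ is the smaller value occurring (or the common value, using the convention stated in condition 1) and $\beta$ is the number of parts equal to $u+1$. Summing the parts gives $y = ud + \beta$ automatically, since $\PIII(A)$ is a partition of $y$ by Definition \ref{def:TIIIpartition class}. Theorem \ref{thm:TI_realisation} then provides $B \in \mc M_n^0(qd+y, qd)$ whose free weight $1-\alpha$ on the edge $(qd,1)$ can be set equal to the common weight on the $q$-cycles of $A$, which exists by Theorem \ref{thm:TIII_realisation}. Applying Theorem \ref{thm:TI-TIII} then gives $\PIII(B^d) = \PIII(A)$ with matching edge weights, so $A = B^d$ up to permutation similarity.

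No substantial obstacle arises: the real work has already been done in Theorem \ref{thm:TI-TIII}, and the passage from the theorem's description of $\PIII(B^d)$ to the equivalence in the corollary is essentially bookkeeping, checking that the pair $(u,\beta)$ is uniquely recovered from the partition data supplied by conditions 1 and 2 and that this pair is precisely what the theorem requires as input. The alternative observation mentioned just after Theorem \ref{thm:TI-TIII}, that this result is the $\hat d = 1$, $c = d$ specialisation of Theorem \ref{thm:TIII-TIII}, also lets one deduce the corollary directly as the corresponding specialisation of Corollary \ref{cor:TIII-TIII}.
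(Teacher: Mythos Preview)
Your proposal is correct and matches the paper's approach: the paper treats this corollary exactly as it does Corollaries \ref{cor:TI-TII}, \ref{cor:TII-TII}, and \ref{cor:TIII-TIII}, namely as an immediate consequence of the preceding structure theorem (here Theorem \ref{thm:TI-TIII}), with the partition class and common $q$-cycle weight determining $A$ up to permutation similarity. Your remark that one can alternatively specialise Corollary \ref{cor:TIII-TIII} to $\hat d=1$, $c=d$ is also in line with the paper's observation before Theorem \ref{thm:TI-TIII}.
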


\begin{example}
For $n=337$ we have $\mc K_n(27,337)=\mc K_n(337,324)^{12}$. In this case, the relevant parameters are $q=27$, $s=337$, $d=12$, and $y=13$. By Definition \ref{def:TIIIpartition class} any $12$ non-negative integers that sum up to $13$ form a partition class for some matrix in $\mc M_n^0(27,337)$.
Let $A \in \mc M_n^0(27,337)$ and ${\bf y'}=\npmatrix{y_1 & \ldots & y_{12}} \in \PIII(A)$.

The first part of Corollary \ref{cor:TI-TIII} restricts the elements in ${\bf y'}$ to be in the set $\{u,u+1\}$ for some integer $u$.
This necessary condition leaves us with just one choice for the vector ${\bf y}$ (as defined in Remark \ref{remark:TI-TIII}), which contains the parts of the partition:
$${\bf y}=\left(
\begin{array}{cccccccccccc}
2 & 1 & 1 & 1 & 1 & 1 & 1 & 1 & 1 & 1 & 1 & 1
 \end{array}
\right) . $$
The above ${\bf y}$ gives $\beta=1$ and $u=1$. With these parameters, we get the unique partition class $\PIII(A)$ which is defined in Corollary \ref{cor:TI-TIII}. Therefore $A=B^{12}$ for some $B \in \mc M_n^0(337,324)$ if and only if ${\bf y'}\in \PIII(A)$, where:
\begin{align*}
{\bf y'}={\bf y}&=\left(
\begin{array}{cccccccccccc}
2 & 1 & 1 & 1 & 1 & 1 & 1 & 1 & 1 & 1 & 1 & 1\\
 \end{array}
\right).
\end{align*}
In that case $A=B^{12}=B'^3$ for  $B \in \mc M_n^0(337,324)$ and $B' \in \mc M_n^0(81,337)$, where $\PIII(B')=\T(\hat {\bf y})$ for $\hat{\bf y}=\npmatrix{4 & 3 & 3 &3}.$ 
\end{example}

\noindent
{\bf Acknowledgement. }This publication has emanated from research supported in part by a grant from Science Foundation Ireland under Grant number 18/CRT/6049. For the purpose of Open Access, the author has applied a CC BY public copyright licence to any Author Accepted Manuscript version arising from this submission. The research of the second author is supported in part by NSERC Discovery Grant RGPIN-2019-05408.

\bibliographystyle{plain}
\bibliography{ref}

\end{document}